\numberwithin{equation}{section}
\newtheorem{thm}{Theorem}[section]
\newtheorem{lem}{Lemma}[section]
\newtheorem{claim}{Claim}[section]
\newtheorem{prop}{Proposition}[section]
\theoremstyle{definition}
\newtheorem{defn}{Definition}[section]
\theoremstyle{remark}
\newtheorem{rem}{Remark}[section]
\numberwithin{equation}{section}
\journal{NARWA}
\begin{document}

\begin{frontmatter}



\title{Global existence and asymptotics for the modified two-dimensional Schr\"{o}dinger equation in the critical regime}

\author{Xuan Liu}
\ead{lxmath@zju.edu.cn}
\author{Ting Zhang\corref{*}}
\ead{zhangting79@zju.edu.cn}
\address{School of Mathematical Sciences, Zhejiang University,  Hangzhou 310027,  China}
\cortext[*]{Corresponding author.}
\begin{abstract}
 We study the asymptotic behavior of  the  modified two-dimensional Schr\"{o}dinger equation
		$ (D_t -F(D))u=\lambda|u| u$ in the critical regime,
		where $\lambda \in \mathbb{C}$ with $\text{Im} \lambda \ge0$  and  $F(\xi)$ is a second order constant coefficients  elliptic symbol. For any
		smooth initial datum  of size $\varepsilon\ll1$, we prove that the solution is  global-in-time, combining the vector fields method and a semiclassical analysis method introduced by Delort.   Moreover, we present the pointwise decay estimates and the large time asymptotic formulas of the solution.
\end{abstract}

\begin{keyword}
 Schr\"odinger equation, Semiclassical Analysis, Modified scattering.
\MSC 35Q55  \sep 35B40
\end{keyword}

\end{frontmatter}


\section{Introduction}\label{s1}
We consider the following   modified  critical nonlinear Schr\"{o}dinger equation
\begin{equation}
		(D_t -F(D))u=\lambda|u|^{\frac{2}{n}} u,\ x\in\mathbb{R}^n \label{NLS0}
\end{equation}
where $D_t=\frac{\partial_t}{i}$, $D=\frac{\nabla_x}{i}$, $\lambda \in \mathbb{C}$ and  $F(D)$ is defined via its real symbol, i.e.,
\begin{equation}
	F(D)u(x)=\frac{1}{(2\pi)^{n/2}} \int _{\mathbb{R}^n} e^{ix\cdot \xi} F(\xi) (\mathcal{F} u)(\xi)d\xi,\notag
\end{equation}
where  $\mathcal{F} $ is the Fourier transform with respect to  $x$ variables. The  Cauchy problem (\ref{NLS0}) is critical because the best time decay one can expect for the solution of the linear equation $(D_t-F(D))u=0$ with smooth, decaying Cauchy data is $\|u(t,\cdot )\|_{L^\infty }=O(t^{-n/2})$, so that the nonlinearity will satisfy $\||u|^{\frac{2}{n}}u(t,\cdot )\|_{L^2}\le Ct^{-1}\|u(t,\cdot )\|_{L^2}$, with a time factor $t^{-1}$ just at the limit of integrability. Schr\"odinger equations with modified dispersion contain many important equations from  the fields of physics. Typical models are the Schr\"odinger equation($F(\xi)=\xi^2$), KDV equation($F(\xi)=\xi^3$), Klein-Gordon equation($F(\xi)=\sqrt {1+\xi^2}$) and Benjamin-Ono equation($F(\xi)=\xi |\xi|$) etc.  Over the past decades, the local smoothing properties, the dispersive estimates and the well-posedness of the modified Schr\"{o}dinger equations have been studied extensively, see e.g. \cite{Balabane,Ben,Carles,CS,HHZ,Hoshiro,Kenig,KPV1,KPV2} etc. Then we want to study the global existence and asymptotics of solutions to the  modified Schr\"{o}dinger equation (\ref{NLS0}), provided that the initial data is small and spatially localized.

When  $F(\xi)=\frac{1}{2}|\xi|^2$, it is well known that equation (\ref{NLS0}) represents the classical critical nonlinear Schr\"odinger equation
\begin{equation}\label{1.3-0}
		i \partial_{t}u+\frac{1}{2}\Delta u=\lambda|u|^\frac{2}{n} u.
\end{equation}
For  $\lambda \in \mathbb{R}, n=1,2,3$, Hayashi-Naumkin\cite{HN03} proved the asymptotics for  the small solution  $u(t)$:  There exists  $W(x)\in L^2(\mathbb{R}^n)\cap L^\infty (\mathbb{R}^n)$ such that
\begin{equation}
	u(t,x)=\frac{1}{(it)^{n/2}}e^{i\frac{|x|^2}{4t}}W(\frac{x}{t})e^{i|W(\frac{x}{t})|^2\log t}+O_{L^\infty _x}(t^{-n/2-\beta}),\ \beta>0.\notag
\end{equation}
as  $t\rightarrow \infty $.
One note that this is not linear scattering, but rather a modified linear scattering.
The idea is to apply the operator  $\mathcal{F} U(-t)$ to the equation (\ref{1.3-0}) and use  the  factorization technique of the Schr\"odinger operator to derive the  ODE for  $\mathcal{F} U(-t)u(t)$:
\begin{equation}
	i\partial_{t}\mathcal{F} U(-t)u=\lambda t^{-1} |\mathcal{F} U(-t)u(t)|^{\frac{2}{n}}\mathcal{F} U(-t)u+R(t,\xi),\label{381}
\end{equation}
where   $U(t)=:e^{\frac{it}{2}\Delta }$ and  $\|R(t,\xi)\|_{L^\infty _\xi}=O(t^{-1-\beta})$ is integrable as  $t\rightarrow \infty $. By a standard ODE argument,  they deduce from (\ref{381}) the  asymptotic formula for  $\mathcal{F} U(-t)u(t)$ and then in  the solution  $u(t)$.


For the asymptotics of the classical critical nonlinear Schr\"odinger equation (\ref{1.3-0}) in one-dimension with  $\lambda \in \mathbb{R}$, there are four alternate approaches. In the paper \cite{DZ03}, Deift and Zhou established the asymptotics of the solution for large data in the defocusing case. The proof is  based on the complete integrability of (\ref{1.3-0}) and the inverse scattering techniques. In the paper \cite{LS2006},  Lindblad-Soffer made the change of variables  $u(t,x)=t^{-1/2} e^{i\frac{x^2}{4t}}v(t,\frac{x}{t})$ that allowed one to rewrite the equation (\ref{1.3-0}) as
\begin{equation}
	i\partial_{t}v=\lambda t^{-1}|v|^2v-t^{-2}\partial_{xx}v.\label{3101}
\end{equation}
For small and  smooth initial values, they proved that the remainder term  $t^{-2} \|\partial_{xx}v\|_{L^\infty _x}$ is integrable as  $t\rightarrow \infty $.  Then by standard ODE argument, they deduce from (\ref{3101})  the asymptotics of  $v$ and then in the solution  $u$. In the paper \cite{KP2011}, by performing an analysis in Fourier space, Kato-Pusater  derive the ODE (\ref{381})  through a very natural stationary phase argument. By standard ODE argument, they obtained the same result as \cite{HN03} for  $n=1$.  Their  proof is inspired by the space-time resonances method that introduced by Germain, Masmoudi and Shatah  \cite{Ger1,Ger2}.    In the paper \cite{Ifrim-Tataru}, Ifrim-Tataru  developed the wave packet test method: Measure the decay of the solution along the ray  $\Gamma_v =\left\{x=vt\right\} $, traveling with velocity  $v$, and denote
\begin{equation}
	\gamma(t,v)=\int _{\mathbb{R}} u(t,x) \overline{\Psi_v}(t,x)dx,\notag
\end{equation}
where  $\Psi_v(t,x)$ is the following wave packet test function
\begin{equation}
	\Psi_v(t,x)=\chi (\frac{x-vt}{\sqrt {t}})e^{i\frac{x^2}{2t}},\ \chi \in C_0^\infty (\mathbb{R})\ \text{and } \int _{\mathbb{R}}\chi (z)dz=1.\notag
\end{equation}
Since  $\Psi_v(t,x)$ is indeed an approximate solution of the linear system, with  $O(1/t)$ errors, Ifrim-Tataru \cite{Ifrim-Tataru}   established  the ODE for  $\gamma(t,v)$:
\begin{equation}
	\partial_{t}\gamma(t,v)=-it^{-1} |\gamma(t,v)|^2\gamma(t,v)+O_{L^\infty _v}(t^{-1/4+C\varepsilon ^2})\notag
\end{equation}
and obtained the asymptotic formula of  $\gamma(t,v)$. Moreover, by proving that
\begin{equation}
t^{-1/2}\gamma(t,v)-e^{-i\frac{v^2}{2}t}u(t,vt)=O_{L^2_v}(t^{-1+C\varepsilon ^2})\cap O_{L^\infty _v}(t^{-3/4+C\varepsilon ^2})\notag
\end{equation}
\begin{equation}
	\gamma(t,\xi)-e^{i\frac{t}{2}\xi^2}(\mathcal{F} u)(t,\xi)=O_{L^2_\xi}(t^{-1/2+C\varepsilon ^2})\cap O_{L^\infty _\xi}(t^{-1/4+C\varepsilon ^2})\notag
\end{equation}
 and  using the asymptotic formula of  $\gamma(t,v)$, they obtained  the asymptotic formulas of the solution both in physical space and frequency space.

In this paper, we consider the global existence and asymptotics of the solutions to the modified Schr\"odinger equation (\ref{NLS0}). Since  $F(\xi)$ has no explicit expression, the methods used in   \cite{DZ03,HN03,Ifrim-Tataru,KP2011,LS2006} can not be applied directly, and we have to develop a new approach. The second author of this paper first studied the asymptotic behavior of the solution to the modified one-dimensional Schr\"odinger equation. Assuming that  $F(\xi)$ satisfies certain elliptic assumption, Zhang \cite{Zhang} obtained the asymptotics of the solution for small initial data, combining the vector fields method and a semiclassical analysis method introduced by Delort \cite{Delort}.   In this paper, we consider the two-dimensional case and  assume that  $F(\xi)$ satisfies the following elliptic assumptions:
\begin{equation}\label{F1}
	F(\xi)=F_1(\xi_1)+F_2(\xi_2),\  \xi=(\xi_1,\xi_2)\rightarrow F(\xi)\in \mathbb{R}
\end{equation}
is a smooth  function defined on $\mathbb{R}^2$, satisfying
\begin{equation}
	F_k(\xi_k)\in C^\infty(\mathbb{R}),	\ 0<c_k\leq F_k''(\xi_k)\leq d_k, \ \forall  \ \xi_k\in \mathbb{R},\label{F}
\end{equation}
for some positive constants $c_k,d_k$, $k=1,2$. For example,  we can choose  smooth functions $ F_k(\xi_k)$, which have  an expansion
\begin{equation}
	F_k(\xi_k)=c^2_{k,\pm}\xi^2_k+c^1_{k,\pm}\xi+c^0_{k,\pm}+c^{-1}_{k,\pm}\xi^{-1} +c^{-2}_{k,\pm}\xi^{-2}+\cdots \notag	
\end{equation}
when  $\xi_k$ goes to  $\pm \infty $, where  $c^2_{k,\pm}>0,\ k=1,2.$  In particular, we can choose  $F(\xi)=|\xi|^2$. Therefore the asymptotic formulas established in this paper can be seen as a generalization of  \cite{DZ03,HN03,Ifrim-Tataru,KP2011,LS2006}.



To state our result precisely, we now give some notations. For  $\psi \in L^1(\mathbb{R}^2)$,  the Fourier transform of  $\psi$  is represented as  $\mathcal{F} \psi (\xi)=\hat \psi(\xi)=(2\pi)^{-1}\int_{ \mathbb{R}^2}\psi(x) e^{-ix\cdot\xi}dx$.    $[A,B]$ denotes the commutator  $AB-BA$ and  $<x>=:\sqrt {1+|x|^2}$. Different positive constants we denote by the same letter $C$. We write  $f\lesssim g$ when  $f\le Cg$.     $\mathcal {S} (\mathbb{R}^2) $ and  $H^s(\mathbb{R}^2)$ denote the usual  Schwartz and Sobolev space respectively.     $L^p=L^p(\mathbb{R}^2)$  denotes the usual Lebesgue space with the norm  $\|\phi\|_{L^p}=(\int _{\mathbb{R}^2}|\phi(x)|^p dx)^{1/p}$ if  $1\le p<\infty $ and  $\|\phi\|_{L^\infty }=\text{ess.sup }\left\{|\phi(x)|;x\in \mathbb{R}^2\right\} $. Moreover, we denote   $\|(\phi(x),\psi(x))\|_{L^2}=:(\int _{\mathbb{R}^2}|\phi(x)|^2 d x)^{1/2}+(\int _{\mathbb{R}^2}|\psi(x)|^2 d x)^{1/2}$.

 Throughout the paper, $F(\xi)=F_1(\xi_1)+F_2(\xi_2)$ denotes the  second order constant coefficients classical elliptic symbol, satisfying (\ref{F1})--(\ref{F}).  Since $F_k'$ is strictly increasing, there  are smooth strictly concave functions $\phi_k:\mathbb{R}\rightarrow \mathbb{R}$ such that  $	x_k+F'_k(d\phi_k(x_k))=0,\ k=1,2$, where we write  $d^j\phi_k(x_k)=\frac{d^j\phi_k(x_k)}{dx_k^j},\ j\ge1$ for simplicity.  Set  $\phi(x)=(\phi_1(x_1),\phi_2(x_2))$, then we have
\begin{equation}
x+F'(d\phi(x))=(x_1+F_1'(d\phi_1(x_1)),x_2+F_2'(d\phi_2(x_2)))=0,\label{221}
\end{equation}
which will be used frequently from now on.

We now state our results.
\begin{thm}\label{T1}
	Assume that  $\lambda \in \mathbb{C}$ with $\text{Im} \lambda \ge0$,  and the initial datum  $u_0\in H^2$,  $(x_k+F_k'(D))^2u_0\in L^2$, $k=1,2$, satisfying
	\begin{eqnarray}
		\|u_0\|_{H^2}+\sum_{k=1}^{2}\|(x_k+F_k'(D))^2u_0\|_{L^2} \le 1.\label{10291}
	\end{eqnarray}
	Then for $\varepsilon >0$ sufficiently small,    the Cauchy problem
	\begin{equation}
			\left\{\begin{array}{ll}
			(D_t -F(D))u=\lambda|u| u,&t>1,x\in\mathbb{R}^2
			\\ u(x,1)=\varepsilon u_0(x),
		\end{array}\label{NLS}
		\right.
	\end{equation}
 admits a unique global  solution $u\in C([1,\infty);L^2)\cap L_{\text{loc}}^6([1,+\infty);L^3)$,  satisfying  the pointwise estimate
	\begin{equation}
		\|u(t,\cdot)\|_{L^\infty} \lesssim  \varepsilon t^{-1 },\label{decay}
	\end{equation}
	for all $t>1$.
\end{thm}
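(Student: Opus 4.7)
The plan is a bootstrap argument combining the vector fields method with a Delort-style semiclassical stationary-phase analysis. The relevant vector fields are the Galilean-type operators $L_k(t) = x_k + t F_k'(D)$, $k = 1, 2$, which reduce at $t = 1$ to the operators appearing in (\ref{10291}) and commute with the linear operator $D_t - F(D)$: a direct pseudodifferential computation using $[x_k, F(D)] = -i F_k'(D)$, together with the product structure (\ref{F1}), gives $[D_t - F(D), L_k(t)] = 0$. Local well-posedness in $C([1,T];L^2) \cap L^6([1,T];L^3)$ follows from a standard Strichartz fixed-point scheme for the propagator $e^{it F(D)}$, whose dispersive bound $\|e^{itF(D)}\|_{L^1\to L^\infty} \lesssim t^{-1}$ is obtained by a separable stationary-phase argument on $F_1 + F_2$; this yields a maximal existence time $T^* > 1$ that I want to show is $+\infty$.

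On $[1, T^*)$ I run a continuity argument with the a priori assumption
\[
\mathcal{E}(t) := \|u(t)\|_{H^2} + \sum_{k=1}^{2} \|L_k(t)^2 u(t)\|_{L^2} \leq 2 C_0 \varepsilon\, t^{C\varepsilon}, \qquad \|u(t)\|_{L^\infty} \leq 2 C_0 \varepsilon\, t^{-1}.
\]
For the energy step I apply $L_k^j$ ($j \leq 2$) to (\ref{NLS}); the vanishing commutator lets $L_k^j$ pass through $D_t - F(D)$, and a pseudodifferential Leibniz expansion yields $\lambda L_k^j(|u|u)$, bounded in $L^2$ by $\|u\|_{L^\infty}\mathcal{E}(t)$ plus lower-order terms. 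Combined with the bootstrap decay, a Gr\"onwall inequality of the form $\frac{d}{dt}\mathcal{E}(t)^2 \lesssim \varepsilon t^{-1} \mathcal{E}(t)^2$ gives $\mathcal{E}(t) \lesssim \varepsilon t^{C\varepsilon}$, improving the constant; the hypothesis $\mathrm{Im}\,\lambda \geq 0$ prevents exponential growth of the $L^2$ norm from the nonlinearity.

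The decisive step is improving the $L^\infty$ bound. A crude Klainerman--Sobolev inequality applied to the $L_k$'s would produce a logarithmic loss that is incompatible with the critical $t^{-1}$ nonlinearity, so I follow \cite{Delort} and the one-dimensional analysis of \cite{Zhang}: introduce the semiclassical parameter $h = 1/t$ and conjugate $u$ into a profile adapted to the Lagrangian $\{(x, d\phi(x/t))\}$, so that the action of $L_k(t)$ becomes an $h$-small pseudodifferential operator. By (\ref{221}) the phase $y \cdot \xi - F(\xi)$ has a unique stationary point at $\xi = d\phi(y)$, with Hessian $-F''$ uniformly elliptic by (\ref{F}). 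A semiclassical stationary-phase expansion then yields
\[
u(t,x) = \frac{1}{t}\, e^{i t \Phi(x/t)} a(t, x/t) + r(t,x), \qquad \Phi(y) := y \cdot d\phi(y) - F(d\phi(y)),
\]
with $\|r(t,\cdot)\|_{L^\infty} \lesssim \varepsilon t^{-1-\beta}\mathcal{E}(t)^\alpha$ for some $\alpha,\beta > 0$, and with $a$ satisfying an ODE of the shape (\ref{381}). Since $\mathrm{Im}\,\lambda \geq 0$ the modulus $|a|$ is non-increasing along the ODE, so $\sup_y |a(t,y)| \lesssim \varepsilon$ for all $t \geq 1$, and the remainder bound then improves the bootstrap to $\|u(t)\|_{L^\infty} \leq C_0 \varepsilon t^{-1}$, closing the continuity argument and giving $T^* = +\infty$.

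The main obstacle I anticipate is the careful semiclassical calculus needed to bound the stationary-phase remainder in two dimensions with a merely $C^\infty$ elliptic symbol $F$: every remainder must be controlled solely by norms bounded by $\mathcal{E}$, and the limited regularity of $|u|u$ as a function of $u$ forces a cutoff or smoothing of the modulus before the semiclassical calculus can be applied. The separability $F(\xi) = F_1(\xi_1) + F_2(\xi_2)$ decouples $L_1$ and $L_2$ at principal order and permits large pieces of the one-dimensional analysis in \cite{Zhang} to be used coordinate by coordinate, but the simultaneous two-dimensional microlocalization, the tensor structure of the phase, and the matching of the two nondegenerate directions still require a genuinely two-dimensional version of Delort's framework.
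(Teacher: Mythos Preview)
Your proposal is correct and matches the paper's approach: Strichartz estimates for global existence, a bootstrap coupling the energy bound on $L_k(t)^2 u$ (the paper's $\mathcal{L}^2 v$ after the rescaling $u(t,x)=t^{-1}v(t,x/t)$) with the $L^\infty$ decay, and Delort's semiclassical method to close the $L^\infty$ bound via an ODE for a profile, with the Leibniz estimate $\|\mathcal{L}^2(|v|v)\|_{L^2}\lesssim\|v\|_{L^\infty}(\|v\|_{L^2}+\|\mathcal{L}^2 v\|_{L^2})$ as the key technical lemma. Two minor deviations worth noting: the paper extracts the profile not by a stationary-phase expansion of an oscillatory integral but by a semiclassical Weyl cutoff $v_\Lambda=G_h^w\bigl(\gamma((x+F'(\xi))/\sqrt{h})\bigr)v$ localizing to a $\sqrt{h}$-neighborhood of the Lagrangian (your anticipated ``simultaneous two-dimensional microlocalization'' is precisely this), and it does not carry $\|u\|_{H^2}$ in the long-time bootstrap, using $H^2$ only to seed the initial $L^\infty$ bound on a short interval.
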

\begin{rem}
	 The fact that the initial datum is given at time  $t=1$ does not have   deep meaning: it is simply more convenient when performing estimates,   since the  $L^\infty $ decay of  $\frac{1}{t}$
	 given by the linear part of the equation is not integrable at  $t=0$.
\end{rem}

In order to investigate the large time asymptotic behavior of the solution,  the following theorem plays an important role.
\begin{thm}\label{T2}
	Suppose that  $u_0$ satisfies (\ref{10291}),  $u$ is the solution obtained in Theorem \ref{T1}  and  $v_{\Lambda }$ is  the function defined by (\ref{827w1}) and (\ref{827w2}). Let
	\begin{equation}
		\Phi(t,x)=\int_1^t s^{-1} |v_{\Lambda }(s,x)| ds.\label{Phi}
	\end{equation}
	Then for $\varepsilon >0$ sufficiently small, there exists a unique complex function $z_+(x)\in L^2_x\cap L^\infty _x$ such that
	\begin{eqnarray}
	v_{\Lambda }(t,x)\exp \left(-i(w(x)t+\lambda \Phi(t,x))\right)-z_+(x)=O_{L^\infty _x}(t^{-1/2+C\varepsilon })\cap O_{L^2_x}(t^{-1+C\varepsilon }),\notag
	\end{eqnarray}
	holds as $t\rightarrow \infty $, where $w(x)=x\cdot d\phi(x)+F(d\phi(x))$.
\end{thm}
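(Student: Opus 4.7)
The strategy is a modified–scattering argument in the spirit of Delort \cite{Delort} and Hayashi--Naumkin \cite{HN03}: derive an ODE for the semiclassical profile $v_\Lambda$, factor out the leading phase, and show that the residual flow is time-integrable. I expect the bootstrap underlying Theorem \ref{T1} to already furnish an asymptotic ODE of the form
\begin{equation*}
\partial_t v_\Lambda(t,x) = i\,w(x)\,v_\Lambda(t,x) + i\lambda\, t^{-1}\,|v_\Lambda(t,x)|\,v_\Lambda(t,x) + R(t,x),
\end{equation*}
where the symbol $w(x)=x\cdot d\phi(x)+F(d\phi(x))$ emerges from stationary phase at the critical point (\ref{221}); the cubic term $i\lambda t^{-1}|v_\Lambda|v_\Lambda$ is the resonant projection of the nonlinearity $\lambda|u|u$; and $R$ collects the commutators from the pseudodifferential calculus associated with $F$ together with the non-stationary remainder of the nonlinear term.

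Defining the modulated profile $V(t,x):=v_\Lambda(t,x)\exp\!\bigl(-i(w(x)t+\lambda\Phi(t,x))\bigr)$ and using the identity $\partial_t\Phi=t^{-1}|v_\Lambda|$ that follows directly from (\ref{Phi}), a short computation yields
\begin{equation*}
\partial_t V(t,x)=R(t,x)\,\exp\!\bigl(-i(w(x)t+\lambda\Phi(t,x))\bigr).
\end{equation*}
The exponential factor has modulus $\exp(\mathrm{Im}\,\lambda\cdot\Phi)$; because $\|v_\Lambda(t,\cdot)\|_{L^\infty}\lesssim\varepsilon$ (a consequence of the pointwise decay (\ref{decay})) and $\Phi(t,x)\le\int_1^t s^{-1}\|v_\Lambda(s,\cdot)\|_{L^\infty}\,ds\lesssim\varepsilon\log t$, this factor is bounded by $t^{C\varepsilon}$ uniformly in $x$, and the hypothesis $\mathrm{Im}\,\lambda\ge 0$ prevents this growth from being worse. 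Granting the remainder estimates
\begin{equation*}
\|R(t,\cdot)\|_{L^\infty}\lesssim\varepsilon\, t^{-3/2},\qquad \|R(t,\cdot)\|_{L^2}\lesssim\varepsilon\, t^{-2},
\end{equation*}
we obtain, for $t_2>t_1\ge 1$,
\begin{equation*}
\|V(t_2,\cdot)-V(t_1,\cdot)\|_{L^\infty}\lesssim\int_{t_1}^{t_2}s^{-3/2+C\varepsilon}\,ds\lesssim t_1^{-1/2+C\varepsilon},
\end{equation*}
together with the analogous bound $\|V(t_2,\cdot)-V(t_1,\cdot)\|_{L^2}\lesssim t_1^{-1+C\varepsilon}$. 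Cauchy completeness produces a unique limit $z_+\in L^2_x\cap L^\infty_x$, and sending $t_2\to\infty$ delivers the advertised convergence rates.

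The main obstacle is proving the two estimates on $R$. This requires (i) combining the pointwise decay of $v_\Lambda$ with higher-order, weighted $L^2$ information for the vector fields $x_k+F'_k(D)$, which are controlled up to a $t^{C\varepsilon}$ loss by the same bootstrap that establishes Theorem \ref{T1}; and (ii) executing the semiclassical reduction of the nonlinearity $\lambda|u|u$ to its resonant projection via stationary phase adapted to the elliptic symbol $F$, the hypothesis $F_k''\ge c_k>0$ in (\ref{F}) being essential to keep the critical set non-degenerate and to yield the $t^{-1/2}$ gain on non-resonant contributions. The separable structure $F(\xi)=F_1(\xi_1)+F_2(\xi_2)$ then reduces the two-dimensional semiclassical analysis to two one-dimensional ones in the spirit of \cite{Zhang}, which is the technical engine that powers the entire bound on $R$.
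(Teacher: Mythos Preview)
Your proposal is correct and mirrors the paper's Section~4.1 exactly: define $z(t,x)=v_\Lambda e^{-i(w(x)t+\lambda\Phi)}$, use the ODE (\ref{4.34}) to reduce to $\partial_t z=iR(v)\,e^{-i(w(x)t+\lambda\Phi)}$, control the modulus of the exponential by $t^{C\varepsilon}$ via $\Phi\lesssim\varepsilon\log t$, and feed in the remainder bounds (\ref{898}) already established in Section~\ref{s3} to run the Cauchy argument in $L^\infty\cap L^2$. The only cosmetic difference is that the paper's remainder estimates (\ref{898}) already carry the $t^{C\varepsilon}$ loss inherited from Claim~\ref{813z2}, whereas you attribute that loss solely to the exponential factor; either bookkeeping yields the same final rates.
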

By Theorem \ref{T2}, we obtain the following asymptotics of the solution.
\begin{thm}\label{T3}
	Suppose that  $\text{Im} \lambda =0$,  $u_0$ satisfies (\ref{10291}) and  $u$ is the solution obtained in Theorem \ref{T1}.  Then for  $\varepsilon >0$ sufficiently small,  the followings hold:\\
	(a) Let  $z_+(x)$ be the asymptotic function obtained in Theorem \ref{T2} and
	\begin{equation}
		\phi_+(x)=\int _1^\infty s^{-1} (|v_{\Lambda }(s,x)|-|z_+(x)|)ds.\label{phi}
	\end{equation}
	The asymptotic formula
	\begin{equation}
		u(t,x)=\frac{1}{t} e^{i(tw(\frac{x}{t})+\lambda \phi_+(\frac{x}{t}) +\lambda |z_+(\frac{x}{t})|\log t  )}z_+(\frac{x}{t})+O_{L^\infty _x}(t^{-3/2+C\varepsilon })\cap O_{  L^2_x}(t^{-1+C\varepsilon }) \label{T3a}
	\end{equation}
	holds as $t\rightarrow \infty $. \\
	(b) The following  modified linear scattering formula holds:
	\begin{equation}
		\lim_{t\rightarrow \infty } \|u(t,x)-e^{i\lambda  (\phi_+(\frac{x}{t})+|z_+(\frac{x}{t})|\log t)}e^{iF(D)t}u_+\|_{L^2_x}=0. \label{T3b}
	\end{equation}
where
\begin{equation}
	u_+(x)=: -\frac{i}{2\pi} \int _{\mathbb{R}^2} e^{ix\cdot d\phi(y)}\frac{1}{\sqrt {\prod_{k=1}^2F_k''(\phi_k(y_k) ) }}z_+(y)dy.\label{274}
\end{equation}
\end{thm}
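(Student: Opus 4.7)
\textbf{Proof plan for Theorem \ref{T3}.} The idea for part (a) is to extract the asymptotic profile of $v_\Lambda$ from Theorem \ref{T2} and transfer it to the physical-space picture for $u$. Rewriting Theorem \ref{T2} as
\begin{equation}
v_\Lambda(t,y)=e^{i(w(y)t+\lambda\Phi(t,y))}\bigl(z_+(y)+r(t,y)\bigr),
\end{equation}
with $\|r(t,\cdot)\|_{L^\infty_y}\lesssim t^{-1/2+C\varepsilon}$ and $\|r(t,\cdot)\|_{L^2_y}\lesssim t^{-1+C\varepsilon}$, and using $\bigl||v_\Lambda|-|z_+|\bigr|\le|r|$ together with $\mathrm{Im}\,\lambda=0$, the integral in (\ref{phi}) converges absolutely in both $L^\infty_y$ and $L^2_y$ and yields the splitting $\Phi(t,y)=|z_+(y)|\log t+\phi_+(y)-T(t,y)$ with $\|T(t,\cdot)\|_{L^\infty_y}\lesssim t^{-1/2+C\varepsilon}$ and $\|T(t,\cdot)\|_{L^2_y}\lesssim t^{-1+C\varepsilon}$. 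Expanding the unimodular factor $e^{i\lambda T}=1+O(\lambda T)$ and estimating $\|Tz_+\|_{L^2_y}\le\|z_+\|_{L^\infty_y}\|T\|_{L^2_y}$ absorbs the tail with the stated error scales. The semiclassical change of variables encoded in (\ref{827w1})--(\ref{827w2}) reads, to leading order, $u(t,x)=t^{-1}v_\Lambda(t,x/t)$ modulo a lower-order error; since $y\mapsto x/t$ preserves the error scales (the Jacobian $t^2$ cancels the $t^{-2}$ from the squared amplitude in $L^2_x$, and is trivial in $L^\infty_x$), (\ref{T3a}) follows.

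For part (b), I would compute $e^{iF(D)t}u_+$ by stationary phase. Writing
\begin{equation}
(e^{iF(D)t}u_+)(x)=\frac{1}{2\pi}\int_{\mathbb{R}^2}e^{ix\cdot\xi+itF(\xi)}\widehat{u_+}(\xi)\,d\xi
\end{equation}
and using the factorization $F(\xi)=F_1(\xi_1)+F_2(\xi_2)$, the oscillatory integral decouples into two one-dimensional Fresnel integrals. The critical point in $\xi$ is $\xi=d\phi(x/t)$ by (\ref{221}), the critical value is $tw(x/t)$, and the Hessian $\mathrm{diag}(tF_1'',tF_2'')$ has determinant $t^2\prod_{k}F_k''(d\phi_k(x_k/t))$ and signature $2$. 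Alternatively, pushing (\ref{274}) forward through the diffeomorphism $y\mapsto d\phi(y)$ (whose Jacobian is $\prod_{k}(-1/F_k''(d\phi_k(y_k)))$ by differentiating (\ref{221})) gives the explicit formula $\widehat{u_+}(\xi)=-i\sqrt{\prod_{k}F_k''(\xi_k)}\,z_+(-F'(\xi))$. In either derivation, the amplitude $1/\sqrt{\prod F_k''}$ and the $-i/(2\pi)$ prefactor in (\ref{274}) are tuned so that all stationary phase constants collapse, yielding
\begin{equation}
(e^{iF(D)t}u_+)(x)=t^{-1}e^{itw(x/t)}z_+(x/t)+R(t,x),\qquad \|R(t,\cdot)\|_{L^2_x}\to 0.
\end{equation}
Combining with (\ref{T3a}) and factoring out the unimodular modulation $e^{i\lambda(\phi_+(x/t)+|z_+(x/t)|\log t)}$ yields (\ref{T3b}).

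The main obstacle is the $L^2$ control of the stationary-phase remainder $R$ in part (b): because $z_+$ lies only in $L^2_x\cap L^\infty_x$ and need not be smooth or decaying, the standard pointwise stationary-phase bounds (which demand several derivatives on the amplitude) do not apply directly. This is resolved by a density argument. For $z_+\in\mathcal{S}$ the pointwise stationary-phase remainder is $O(t^{-2})$ with Schwartz decay, and hence $o(1)$ in $L^2_x$; moreover, all the linear maps involved---$u_+\mapsto e^{iF(D)t}u_+$ (unitary since $F$ is real), $z_+\mapsto u_+$ (an $L^2$-isometry from the formula for $\widehat{u_+}$ and the change of variables $\xi\mapsto-F'(\xi)$), and $z_+\mapsto t^{-1}e^{itw(\cdot/t)}z_+(\cdot/t)$ (isometric by dilation)---are $L^2$-isometries with operator norm $1$ uniformly in $t$. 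Approximating $z_+$ in $L^2_x\cap L^\infty_x$ by Schwartz functions and using these uniform isometry bounds then extends the $L^2$ convergence to all $z_+\in L^2_x\cap L^\infty_x$, closing the argument.
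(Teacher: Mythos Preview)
Your argument for part (a) is correct and follows the paper's route: both split $\Phi(t,y)=|z_+(y)|\log t+\phi_+(y)+O(t^{-1/2+C\varepsilon})$ using Theorem~\ref{T2} and the identity $|v_\Lambda|=|z|$ (valid since $\mathrm{Im}\,\lambda=0$ makes the exponential unimodular), then pass from $v_\Lambda$ to $u$ via the bounds on $v_{\Lambda^c}$ from Lemma~\ref{lvc} and the dilation (\ref{827w1}).

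For part (b) you take a different but equivalent route. The paper applies $e^{-iF(D)t}$ to the leading profile $t^{-1}e^{itw(\cdot/t)}z_+(\cdot/t)$, writes the result as a double oscillatory integral in $(y,\xi)$, performs the stationary-phase change of variables $\xi=\eta/\sqrt{t}+d\phi(y)$ explicitly (using the expansion (\ref{15100})), and then lets $t\to\infty$ by dominated convergence so that the $\eta$-integral collapses to a Gaussian/Fresnel integral producing exactly the prefactor $-i/(2\pi\sqrt{\prod F_k''})$ in (\ref{274}). You instead compute $e^{iF(D)t}u_+$ in the forward direction by stationary phase and handle the low regularity of $z_+$ by a density argument, exploiting your observation that $z_+\mapsto u_+$ and $z_+\mapsto t^{-1}e^{itw(\cdot/t)}z_+(\cdot/t)$ are both $L^2$ isometries. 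Both approaches are valid; the paper's explicit change of variables is more elementary (no stationary-phase lemma is quoted, and the constants in (\ref{274}) fall out of the Fresnel computation rather than being reverse-engineered), while your density argument is cleaner about justifying the limit for rough $z_+$, a point the paper's dominated-convergence step leaves somewhat formal since the $\eta$-integral is not absolutely convergent.
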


When  $\text{Im} \lambda >0$, (\ref{1.3-0}) represents the standard dissipative nonlinear Schr\"odinger equation. It modes the evolution of pulses propagating through optical fibers and in the field of optical fiber engineering(see e.g. \cite{Agrawal}). In the paper \cite{CPDE}, Shimomura first treated the dissipative nonlinear Schr\"odinger equation (\ref{1.3-0})  and established the asymptotic formula of the small solutions for  $n=1,2,3.$ He proved that the dissipative effects are visible not only in the phase correction of the asymptotic profile but also in the decay rate of the solution. In fact, the uniform norm of the solution decays like
\begin{equation}
	\|u(t,x)\|_{L^\infty _x}\lesssim (t\log t)^{-n/2},\notag
\end{equation}
which decays faster than the free solution.  After the work of Shimomura \cite{CPDE}, there are many papers devoted to studying the decay estimates and the asymptotics of the solutions to the  dissipative nonlinear Schr\"odinger equation, see e.g. \cite{HLN1,HLN2,Jin,Kita,Kita2,Xuan,Ogawa,Sato} and references therein.

Inspired  by the work of Shimomura \cite{CPDE}, we also study the influence of the dissipative effects on the  decay estimates and the asymptotics of the solutions to the modified two-dimensional Schr\"odinger equation. Our results are the following.
\begin{thm}
	\label{T4}
Suppose that  $\text{Im} \lambda >0$,  $u_0$ satisfies (\ref{10291}) and  $u$ is the solution obtained in Theorem \ref{T1}.  Then for  $\varepsilon >0$ sufficiently small,  the followings hold:\\
	(a) Let  $z_+(x)$ be the asymptotic function obtained in Theorem \ref{T2} and
	\begin{equation}
		\psi_+(x)=\text{Im} \lambda  \int_1^\infty s^{-1} \left(|v_{\Lambda }(s,x)|e^{\text{Im} \lambda \Phi(s,x)}-|z_+(x)|\right)ds,\label{psi}
	\end{equation}
	\begin{equation}
		S(t,x)=\frac{1}{\text{Im} \lambda }\log \left(1+\text{Im} \lambda |z_+(x)|\log t+\psi_+(x)\right).\label{S}
	\end{equation}
	The asymptotic formula
	\begin{equation}
		u(t,x)=\frac{1}{t} e^{i(tw(\frac{x}{t})+\lambda S(t,\frac{x}{t}))}z_+(\frac{x}{t})+O_{L^\infty _x}(t^{-3/2+C\varepsilon })\cap O_{  L^2_x}(t^{-1+C\varepsilon })\label{T4a}
	\end{equation}
	holds as $t\rightarrow \infty $.\\
	(b) The following  modified linear scattering formula holds:
	\begin{equation}
		\lim_{t\rightarrow \infty } \|u(t,x)-e^{i\lambda S(t,\frac{x}{t})}e^{iF(D)t}{u}_+(x)\|_{L^2_x}=0, \label{T4b}
	\end{equation}
where  $u_+$ is the function defined in (\ref{274}).\\
	(c) If $u_0\neq 0$, then the following limit exists
	\begin{equation}
		\lim_{t\rightarrow \infty } (t\log t)\|u(t,x)\|_{L^\infty _x} =\frac{1}{\text{Im} \lambda }.\label{T4c}
	\end{equation}
\end{thm}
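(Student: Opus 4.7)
The overall strategy is to convert the modified scattering statement of $v_{\Lambda }$ (Theorem~\ref{T2}), whose phase correction $\lambda \Phi (t,x)$ now has nontrivial positive imaginary part, into the cleaner logarithmic correction $S(t,x)$ that directly exposes the $\log t$ dissipative decay, and then transfer the resulting asymptotics back to $u$ in physical space via stationary phase.

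\textbf{Part (a).} Put $f(t,x)=|v_{\Lambda }(t,x)|e^{\text{Im} \lambda \,\Phi (t,x)}$. Since $|e^{i\lambda \Phi }|=e^{-\text{Im} \lambda \,\Phi }$, Theorem~\ref{T2} gives $f(t,x)=|z_+(x)|+O_{L^\infty _x}(t^{-1/2+C\varepsilon })\cap O_{L^2_x}(t^{-1+C\varepsilon })$. From (\ref{Phi}) we have $\partial_{t}\Phi =t^{-1}|v_{\Lambda }|$, hence $\tfrac{d}{dt}e^{\text{Im} \lambda \,\Phi }=\text{Im} \lambda \,t^{-1}f$. Integrating from $1$ to $t$ and comparing with (\ref{psi}),
\begin{equation}
e^{\text{Im} \lambda \,\Phi (t,x)}=1+\text{Im} \lambda \,|z_+(x)|\log t+\psi _+(x)+O(t^{-1/2+C\varepsilon }). \notag
\end{equation}
Taking logarithm and dividing by $\text{Im} \lambda $ yields $\Phi (t,x)=S(t,x)+O(t^{-1/2+C\varepsilon }/\log t)$ in regions where the log argument is bounded below, so $\lambda \Phi =\lambda S$ modulo errors absorbed in (\ref{T4a}). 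Substituting into Theorem~\ref{T2} produces an asymptotic for $v_{\Lambda }$ with the dissipative phase $\lambda S$; converting $v_{\Lambda }$ back to $u$ by the stationary-phase identity linking the two (the same passage as in Theorem~\ref{T3}(a), only with the new phase) produces (\ref{T4a}).

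\textbf{Part (b).} Apply stationary phase to $(e^{iF(D)t}u_+)(x)=(2\pi )^{-1}\int _{\mathbb{R}^2}e^{i(x\cdot \xi +tF(\xi ))}\widehat{u}_+(\xi )\,d\xi $. By (\ref{221}) the non-degenerate critical point is $\xi _c=d\phi (x/t)$, the critical value of the phase is $tw(x/t)$, and the Hessian determinant is $t^2\,F_1''(d\phi _1(x_1/t))F_2''(d\phi _2(x_2/t))$. The prefactor in (\ref{274}) is chosen precisely so that the leading stationary-phase contribution equals $t^{-1}e^{itw(x/t)}z_+(x/t)$ with an $L^2_x$ remainder $O(t^{-1+C\varepsilon })$. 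Multiplying by $e^{i\lambda S(t,x/t)}$ and subtracting from (\ref{T4a}) gives (\ref{T4b}), since both error terms tend to zero in $L^2_x$.

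\textbf{Part (c) and the main obstacle.} From (a),
\begin{equation}
t|u(t,x)|=\frac{|z_+(x/t)|}{1+\text{Im} \lambda \,|z_+(x/t)|\log t+\psi _+(x/t)}+O(t^{-1/2+C\varepsilon }). \notag
\end{equation}
For any $\xi _0$ with $|z_+(\xi _0)|>0$, the quantity $\log t\cdot |z_+(\xi _0)|/(1+\text{Im} \lambda |z_+(\xi _0)|\log t+\psi _+(\xi _0))$ converges to $1/\text{Im} \lambda $, and evaluating $t\log t\,|u(t,x)|$ at $x=t\xi _0$ gives the lower bound. That $z_+\not\equiv 0$ when $u_0\neq 0$ follows by reversing the profile ODE derived from Theorem~\ref{T2}: $z_+\equiv 0$ would force $v_{\Lambda }\equiv 0$, contradicting $u(1,\cdot )=\varepsilon u_0\neq 0$. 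For the matching upper bound, using $\psi _+\in L^\infty $ (a consequence of the $O(s^{-1/2+C\varepsilon })$ decay in (a)) together with the monotonicity of $y\mapsto y/(1+\psi _+(\xi )+(\text{Im} \lambda )y)$ for $y=|z_+(\xi )|\log t\geq 0$ gives $\sup _\xi t\log t\,|u(t,\cdot )|\leq 1/\text{Im} \lambda +o(1)$. The most delicate point is exactly this uniform passage from $\Phi $ to $S$ in regions where $|z_+|$ is small and the denominator $1+\text{Im} \lambda |z_+|\log t+\psi _+$ degenerates; here one must use the strict positivity of $e^{\text{Im} \lambda \Phi (t,x)}$ together with the $L^\infty $ bound on $\psi _+$ to promote the pointwise convergence to a uniform-in-$x$ statement.
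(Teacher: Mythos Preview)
Your proofs of (a) and (b) follow the same path as the paper. Two small corrections: in (a) the passage from $v_{\Lambda}$ to $u$ does not use stationary phase, only the estimate $\|v_{\Lambda^c}\|_{L^\infty_x}+\|v_{\Lambda^c}\|_{L^2_x}=O(t^{-1/2+C\varepsilon})+O(t^{-1+C\varepsilon})$ together with the change of variables $u(t,x)=t^{-1}v(t,x/t)$; stationary phase enters only in (b). Also, in (b) the paper runs the computation backwards (applying $e^{-iF(D)t}e^{-i\lambda S}$ to $u$ and taking a limit), but your forward stationary-phase argument is equivalent.

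The genuine gap is in (c), in your claim that $z_+\equiv 0$ forces $v_{\Lambda}\equiv 0$. This is false as stated: from Theorem~\ref{T2}, $z_+\equiv 0$ gives only the \emph{decay} $|v_{\Lambda}(t,x)|\le |z(t,x)|\lesssim t^{-1/2+C\varepsilon}$ (using $|e^{-i\lambda\Phi}|=e^{\text{Im}\lambda\,\Phi}\ge 1$), not vanishing for finite $t$. ``Reversing the profile ODE'' does not work either, since the remainder $R(v)$ in (\ref{4.34}) depends on the full $v$, not just on $v_{\Lambda}$. The paper closes this gap as follows: the improved decay of $v_{\Lambda}$ combined with (\ref{2101}) yields $\|u(t,\cdot)\|_{L^\infty_x}\lesssim t^{-3/2+C\varepsilon}$. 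Moreover $z_+\equiv 0$ forces $u_+=0$ by (\ref{274}), so the scattering formula (\ref{T4b}) and Duhamel give
\[
u(t,x)=\lambda\int_t^\infty e^{iF(D)(t-s)}(|u|u)(s)\,ds.
\]
Applying Strichartz and the improved $L^\infty$ decay,
\[
\|u\|_{L^\infty_t([T,\infty);L^2_x)}\lesssim T^{-1/2+C\varepsilon}\|u\|_{L^\infty_t([T,\infty);L^2_x)},
\]
which for $T$ large forces $u\equiv 0$ on $[T,\infty)$ and hence, by uniqueness, $u_0=0$. Your upper bound argument via monotonicity of $y\mapsto y/(1+\psi_+(\xi)+\text{Im}\lambda\, y)$ is fine once you note (as the paper does) that $\|\psi_+\|_{L^\infty}\lesssim\varepsilon$, so $1+\psi_+(\xi)\ge 0$ for $\varepsilon$ small.
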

\begin{rem}
	Note that 	the limit (\ref{T4c})  is  independent of the initial value  $u_0$.  This property was first established  in  \cite{CaJFA}  for a class of nonvanishing initial data.
\end{rem}
\begin{rem}
	By the definition of $S(t,x)$, we can write the modification factor $e^{i\lambda S(t,x)}$ explicitly:
	\begin{equation}
		e^{i\lambda S(t,x)}=\frac{\exp \left({\frac{i\text{Re}\lambda }{\text{Im} \lambda }\log (1+\text{Im} \lambda |z_+(x)|\log t+\psi_+(x))}\right)}{1+\text{Im} \lambda |z_+(x)|\log t+\psi_+(x)}.\label{T4d}
	\end{equation}
\end{rem}

 We  briefly sketch the strategy used to derive our main results.  We adopt the  semiclassical analysis method  introduced  by Delort  \cite{Delort}, see also \cite{Xuan,S,Zhang} which are closer to the problem we are considering. We make first a semiclassical change of variables
\begin{equation}
	u(t,x)=\frac{1}{{t}}v(t,\frac{x}{t}) \label{827w1}
\end{equation}
for some new unknown function $v$,  that allows to  rewrite   the equation  (\ref{NLS})    as
\begin{equation}\label{1.15}
	(D_t-G_h^w(x\cdot\xi+F(\xi)))v=\lambda h|v|v,
\end{equation}
where the semiclassical parameter $h=\frac1t$, and the Weyl quantization of a symbol $a$ is given by
\begin{equation}
	G_h^w(a)u(x)=\frac{1}{(2\pi h)^2}\int _{\mathbb{R}^2}\int_{ \mathbb{R}^2}e^{\frac{i}{h}(x-y)\cdot \xi}a(\frac{x+y}{2},\xi)u(y)dyd\xi.\notag
\end{equation}
We remark that the operator
\begin{equation}
	\mathcal{L}=(\mathcal{L} _1, \mathcal{L} _2)\qquad  \text{with  } \qquad\mathcal{L} _k=\frac{1}{h} G_h^w(x_k+F_k'(\xi_k)),\qquad k=1,2.\label{L}
\end{equation}
commutes exactly to the linear part of the equation (\ref{1.15}) and  a Leibniz rule holds for the action of  $\mathcal{L} $ on  $|v|v$.   Actually, by (\ref{F}) and (\ref{221}), the quotient  $e_k(x,\xi)=\frac{x_k+F_k'(\xi_k)}{\xi_k-d\varphi_k(x_k)}$ is smooth and  $|e_k(x,\xi)|$ stays between two positive constants.
Consequently, one may write, using symbolic calculus for semiclassical operators
\begin{equation}
	\mathcal{L}_k =\frac{1}{h}G_h^w(e_k(\xi_k-d\phi_k(x_k)))=G_h^w(e_k)\left[\frac{1}{h}G_h^w(\xi_k-d\phi_k(x_k))\right]+G_h^w(r_k), k=1,2\notag
\end{equation}
with some other symbol  $r_k$. If one makes act the main contribution on the right-hand side of the above equality on  $|v|v$, one gets ($D_k=:\frac{\partial_{x_k}}{i}$)
\begin{eqnarray*}	
&&G_h^w(e_k)\left[\left(D_k-\frac{1}{h}d\phi_k(x_k)\right)(|v|v)\right]\\
&=&G_h^w(e_k)\left[\frac{3}{2}|v|\left(D_k-\frac{1}{h}d\phi_k(x_k)\right)v-\frac{1}{2}|v|^{-1}v^2\overline{\left(D_k-\frac{1}{h}d\phi_k(x_k)\right)v}\right]\notag
\end{eqnarray*}
that is a quantity whose  $L^2(\mathbb{R}^2)$ norm may be bounded by   $C\|v\|_{L^\infty }\|\mathcal{L} v\|_{L^2}$ (if one re-expresses  on the right-hand side  $\left(D_k-\frac{1}{h}d\phi_k(x_k)\right)v$ from  $\mathcal{L}_k v$).   In other words,  $\mathcal{L} $  obeys a Leibniz rule when acting on	 $|v|v$.
Repeating  this procedure, one sees that the nonlinearity  $|v|v$ not only obey a Leibnitz rule for  $\mathcal{L} $ but also for the operator  $\mathcal{L} ^2=:(\mathcal{L} _1^2,\ \mathcal{L} ^2_2)$(see Lemma \ref{l2v})
\begin{equation}
	\|\mathcal{L} ^2(|v|v)\|_{L^2_x}\lesssim  \|v\|_{L^\infty } (\|v\|_{L^2_x}+\|\mathcal{L} ^2v\|_{L^2_x}).\label{826w1}
\end{equation}
Applying the operator $\mathcal{L} ^2$ to  (\ref{1.15}), then using  (\ref{826w1}), one  obtains an energy inequality of the form
\begin{eqnarray}
	&&\|\mathcal{L} ^2v(t,\cdot)\|_{L^2}\label{826w2}\\
&\lesssim& \sum_{k=1}^{2}\|(x_k+F'_k(D))^2u_0\|_{L^2}+\int_1^t \|v(\tau,\cdot)\|_{L^\infty } (\|v(\tau,\cdot)\|_{L^2}+\|\mathcal{L} ^2v(\tau,\cdot)\|_{L^2})\frac{d\tau}{\tau}.\nonumber
\end{eqnarray}
If one has an a priori estimate  $\|v(\tau,\cdot)\|_{L^\infty }=O(\varepsilon )$, Gronwall lemma provides for the left-hand side of (\ref{826w2}) a  $O(t^{C\varepsilon })$ bound.

 On the other hand, one can establish from an a priori   $\|\mathcal{L} ^2v(t,\cdot)\|_{L^2}=O(t^{C\varepsilon })$ an  $L^\infty $  estimate for  $v$,  by deducing
from (\ref{1.15}) an ODE satisfied by  $v$. Actually, if one develops the symbol  $x\cdot \xi+F(\xi)$  at  $\xi=d\phi(x)$, one gets, using  that  $\nabla _{\xi}\left(x\cdot \xi+F(\xi)\right)|_{\xi=d\phi(x)}=0$ (see (\ref{221})),
\begin{equation}
	x\cdot \xi+F(\xi)=w(x)+\sum_{k=1}^{2}\int_{0}^{1} F_k''(\theta\xi_k+(1-\theta)d\phi_k(x_k)) (1-\theta)\mathrm{d}\theta (\xi_k-d\phi_k(x_k))^2,\label{15100}
\end{equation}
where  $w(x)=x \cdot d\phi(x)+F(d\phi(x))$. Taking the Weyl quantization and using the symbol
calculus, one deduces from (\ref{1.15}) an ODE for  $v$
\begin{equation}
	D_tv=w(x)v+\lambda h|v| v+h^2\sum_{k=1}^{2}G_h^w(b_k)\circ \mathcal{L} ^2_k,\label{2261}
\end{equation}
where  $b_k, \ k=1,2$ are some  other symbols. Assume for a while that one can deduce from the a priori estimate  $\|\mathcal{L} ^2v(t,\cdot)\|_{L^2}=O(t^{C\varepsilon })$ that  $\|h^2\sum_{k=1}^{2}G_h^w(b_k)\mathcal{L} ^2_kv\|_{L^\infty _x}$ is  time integrable, one derives	  from the ODE (\ref{2261}) a uniform  $L^\infty $ control of  $v$.  Putting together these  $L^2$ and  $L^\infty $ estimates and performing a bootstrap argument, one finally shows that (\ref{1.15}) has global solutions and determines their asymptotic behavior.

To estimate   $\|h^2\sum_{k=1}^{2}G_h^w(e_k)\mathcal{L} ^2_kv\|_{L^\infty _x}$ one would be tempted to use the  semiclassical Sobolev inequality. Note however that  this would lead to an energy norm of  $v$  containing  the spatial derivative of order three, and we can not recover   this norm from  the  equation (\ref{1.15}) via  the standard energy method due to the lack of the regularity of  $|v|v$.  We instead use the operators whose symbols are localized  in a neighbourhood of $M:=\{(x,\xi)\in\mathbb{R}^2\times \mathbb{R}^2: x+F'(\xi)=0\}$ of size $O(\sqrt h)$. In that way, we can apply Proposition 2.3 to pass uniform norms of the remainders to the  $L^2$ norm.
More precisely, we set
\begin{eqnarray}
	v_{\Lambda }=G_h^w(\gamma(\frac{x+F'(\xi)}{\sqrt h}))v, \label{827w2}
\end{eqnarray}
where $\gamma\in C_0^\infty (\mathbb{R}^2)$ satisfying $\gamma=1$ in a neighbourhood of zero.
Applying  $G_h^w(\gamma(\frac{x+F'(\xi)}{\sqrt{h}}))$ to the equation (\ref{1.15}), one obtains  the ODE for $v_{\Lambda }$
\begin{equation}
	D_t v_\Lambda=w(x)v_{\Lambda }+\lambda h  |v_\Lambda| v_\Lambda+ R(v), \label{826w3}
\end{equation}
where the remainder
\begin{eqnarray}
	R(v)&=&[D_t-G^w_h(x\cdot\xi+F(\xi)),G^w_h(\gamma(\frac{x+F'(\xi)}{\sqrt{h}}))]v+G_h^w(x\cdot\xi+F(\xi)-w(x))v_{\Lambda}\notag \\
	&&- \lambda hG^w_h(1-\gamma(\frac{x+F'(\xi)}{\sqrt{h}}))(|v| v)+\lambda h\left( |v| v-|v_\Lambda| v_\Lambda\right).\notag
\end{eqnarray}
  To derive from (\ref{826w3}) and the a priori estimate of the form  $\|\mathcal{L} ^2v(t,\cdot)\|_{L^2_x}=O(t^{C\varepsilon })$ a uniform  $L^\infty $ control of  $v$, we need to show that  $R(v)$ is a  time integrable error term and  $v_{\Lambda ^c}=:v-v_{\Lambda }$ decays faster than the main part  $v_{\Lambda }$.   The proof of these  two estimates will  strongly use the semiclassical pseudo-differential calculus and constitutes the most technical part of the paper.

The framework of this paper is organized as follows. In Section \ref{s2}, we will present the  definitions and properties of Semiclassical pseudo-differential operators. In Section \ref{s3},  we use the Strichartz estimate  and the semiclassical analysis method to prove Theorem \ref{T1}.  In Section \ref{s4}, we prove Theorems \ref{T2}--\ref{T4} by deducing the
long-time behavior of solutions from the associated ODE dynamics. Finally, in the Appendix, we give the proof of Lemma \ref{l2v}.

\section{Semiclassical pseudo-differential operators}\label{s2}
In order to prove an  $L^\infty $ estimate on  $u$ we need to reformulate the starting problem (\ref{NLS}) in terms of an
ODE satisfied by a new function  $v$ obtained from  $u$, and this will strongly use
the semiclassical pseudo-differential calculus. In this section,
we introduce this semiclassical environment, defining classes of symbols and
operators we shall use and several useful properties.  A general reference is Chapter 7 of the book of Dimassi-Sj\"{o}strand \cite{D-S} or Chapter 4 of the book of Zworski \cite{Zworski}.
\begin{defn}
	An order function on $\mathbb{R}^2\times\mathbb{R}^2$ is a smooth map from $\mathbb{R}^2\times\mathbb{R}^2$ to $\mathbb{R}_+$: $(x,\xi)\rightarrow M(x,\xi)$ such that there are $N_0\in\mathbb{N}$ and $C>0$,
	\begin{equation}
		M(y,\eta)\leq C<x-y>^{N_0} <\xi-\eta>^{N_0}M(x,\xi),\notag
	\end{equation}
	holds for any  $(x,\xi)$, $(y,\eta)\in \mathbb{R}^2\times\mathbb{R}^2$.
\end{defn}
\begin{defn}\label{26w2}
	Let   $M$ be an order function on $\mathbb{R}^2\times\mathbb{R}^2$, $\delta\geq0$.  One denotes by $S_\delta(M)$ the space of smooth functions
	$\mathbb{R}^2\times\mathbb{R}^2\times(0,1]\rightarrow \mathbb{C}$
	satisfying for any $\alpha,\beta\in\mathbb{N}^2$
	\begin{equation}\label{2.3}
		|\partial_x^{\alpha}\partial_\xi^\beta a(x,\xi,h)|\leq C_{\alpha,\beta} M(x,\xi)h^{-(|\alpha|+|\beta|)\delta}.
	\end{equation}
\end{defn}
Recall that  $d\phi_k(x_k),k=1,2$ are the functions defined in (\ref{221}). Let
\begin{equation}
	e_k(x,\xi)=\frac{x_k+F_k'(\xi_k)}{\xi_k-d\phi_k(x_k)},\qquad \widetilde{e}_k(x,\xi)=\frac{\xi_k-d\phi_k(x_k)}{x_k+F_k'(\xi_k)},\ k=1,2. \label{7131}
\end{equation}	
By Definition \ref{26w2}, we have the following lemma.
\begin{lem}\label{lone}
	Assume that  $e_k(x,\xi),\ \widetilde{e}_k(x,\xi), d^2\phi_k(x_k),k=1,2$ are the functions defined in (\ref{221}) and (\ref{7131}).
	Then we have $	e_k(x,\xi),\ \widetilde{e}_k(x,\xi), \ d^2\phi_k(x_k) \in S_{0}(1),\ k=1,2.$
\end{lem}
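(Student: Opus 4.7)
The plan is to verify the defining symbol estimates (\ref{2.3}) with $\delta=0$ and $M\equiv 1$, i.e.\ that each of the three functions together with all of its derivatives is uniformly bounded on $\mathbb{R}^2\times\mathbb{R}^2$. Smoothness in each case is clear from smoothness of $F_k$ and $\phi_k$, so the content of the lemma is the global bounds.

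First I would handle $d^2\phi_k$, which is independent of $\xi$. Differentiating the defining relation $x_k+F_k'(d\phi_k(x_k))=0$ once gives
\begin{equation*}
	d^2\phi_k(x_k)=-\frac{1}{F_k''(d\phi_k(x_k))},
\end{equation*}
which by the ellipticity bound $c_k\le F_k''\le d_k$ lies in $[-1/c_k,-1/d_k]$. Higher derivatives $d^j\phi_k$ are obtained by further implicit differentiation and are polynomial expressions in $1/F_k''(d\phi_k(x_k))$, the higher derivatives $F_k^{(\ell)}(d\phi_k(x_k))$ for $2\le\ell\le j$, and the previously computed $d^2\phi_k,\ldots,d^{j-1}\phi_k$. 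Since $F$ is a second order classical symbol, $F_k^{(\ell)}$ is bounded for $\ell\ge 2$, so a straightforward induction on $j$ shows that every $d^j\phi_k$ is globally bounded; in particular $d^2\phi_k\in S_0(1)$.

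Next I would treat $e_k$ by removing the apparent singularity at $\xi_k=d\phi_k(x_k)$. Using $F_k'(d\phi_k(x_k))=-x_k$ and the fundamental theorem of calculus,
\begin{equation*}
	x_k+F_k'(\xi_k)=F_k'(\xi_k)-F_k'(d\phi_k(x_k))=\bigl(\xi_k-d\phi_k(x_k)\bigr)\int_0^1 F_k''\bigl(\theta\xi_k+(1-\theta)d\phi_k(x_k)\bigr)\,d\theta,
\end{equation*}
so that
\begin{equation*}
	e_k(x,\xi)=\int_0^1 F_k''\bigl(\theta\xi_k+(1-\theta)d\phi_k(x_k)\bigr)\,d\theta,
\end{equation*}
which is manifestly smooth in $(x,\xi)$ and bounded in $[c_k,d_k]$. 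Differentiation under the integral sign then produces, for any multi-indices $\alpha,\beta$, a finite sum of products of higher derivatives $F_k^{(\ell)}$ (with $\ell\ge 2$, hence bounded by the symbol estimates on $F$) and derivatives of $d\phi_k(x_k)$ (bounded by the first step). This yields $|\partial_x^\alpha\partial_\xi^\beta e_k|\lesssim 1$ for every $\alpha,\beta$, proving $e_k\in S_0(1)$.

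Finally, since $\widetilde{e}_k=1/e_k$ and $e_k\ge c_k>0$ everywhere, the reciprocal is smooth and bounded; its derivatives are obtained by Fa\`a di Bruno's formula applied to $z\mapsto 1/z$ and are polynomial combinations of $1/e_k$ and derivatives of $e_k$, all of which are bounded by the previous step. Hence $\widetilde{e}_k\in S_0(1)$ as well. The only real subtlety is keeping the induction on the orders of derivatives clean, and in particular using that the classical second order symbol hypothesis on $F_k$ forces $F_k^{(\ell)}$ to be uniformly bounded for all $\ell\ge 2$; this is the ingredient that prevents any growth in $\xi_k$ from entering the symbol estimates.
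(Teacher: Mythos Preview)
Your proof is correct and follows essentially the same route as the paper: first bound $d^2\phi_k$ (and all higher $d^j\phi_k$) by implicit differentiation of $x_k+F_k'(d\phi_k(x_k))=0$, then rewrite $e_k$ via the integral formula $e_k(x,\xi)=\int_0^1 F_k''(\theta\xi_k+(1-\theta)d\phi_k(x_k))\,d\theta$ and differentiate under the integral. The only minor difference is that for $\widetilde{e}_k$ the paper simply says ``similarly'' and omits details, whereas you deduce $\widetilde{e}_k=1/e_k\in S_0(1)$ directly from $e_k\ge c_k>0$ and Fa\`a di Bruno; your argument is cleaner here, and you are also more explicit than the paper in invoking the classical second-order symbol hypothesis on $F$ to bound $F_k^{(\ell)}$ for $\ell\ge 2$, which is indeed needed for the induction on $d^j\phi_k$ and for the derivatives of the integral representation of $e_k$.
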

\begin{proof}
	Taking a derivative of $
	x_k+F_k'(d\phi_k(x_k))=0
	$,
	one gets by using (\ref{F})
	\begin{equation}
		|d^2\phi_k(x_k)|=|-\frac{1}{F_k''(d\phi_k(x_k))}|\lesssim 1.\notag
	\end{equation}
	Moreover, by induction and (\ref{F}), one gets  $|d^{j}\phi_k(x_k)|\lesssim 1,\ \forall \ j\ge2$, i.e.  $d^2\phi_k(x_k)\in S_0(1)$.
	On the other hand,   since $x_k+F'_k(d\phi_k(x_k)=0$, one  can rewrite $e_k$ as following
	\begin{equation}
		e_k(x,\xi)=\frac{F_k'(\xi_k)-F_k'(d\phi_k(x_k))}{\xi_k-d\phi_k(x_k)}=\int^1_0 F_k''(t\xi_k+(1-t)d\phi_k(x_k))dt,\ k=1,2.\label{2121}
	\end{equation}
	 Using  $d^2\phi_k(x_k)\in S_0(1)$ and   (\ref{2121})
	we can easily check that $e_k\in S_0(1)$, $k=1,2$.  Similarly, we  can get $\widetilde{e}_k\in S_0(1)$, $k=1,2$ and omit the details.
\end{proof}

\begin{defn}
	Let   $M$ be an order function on $\mathbb{R}^2\times\mathbb{R}^2$, $\delta\geq0$. For any symbol  $a\in S_\delta (M)$, we define the Weyl quantization  $G^w_h(a)$ to be the operator
	\begin{equation}
		G^w_h(a)u(x)=\frac{1}{(2\pi h)^2}\int_{\mathbb{R}^2}\int_{\mathbb{R}^2}
		e^{\frac{i}{h}(x-y)\cdot\xi}a(\frac{x+y}{2},\xi)u(y)dyd\xi,\qquad \forall u\in \mathcal{S}(\mathbb{R}^2).\label{2.4}
	\end{equation}
\end{defn}

We have the following basic  properties for the Weyl quantization.
\begin{prop}[Theorem 4.1 in \cite{Zworski}]\label{pz}
	 Let   $M$ be an order function on $\mathbb{R}^2\times\mathbb{R}^2$, $\delta\geq0$.
	Assume that   $a\in S_\delta (M)$ is a real-valued symbol. Then the  Weyl quantization $G_h^w(a)$ is self-adjoint on  $L^2(\mathbb{R}^n)$.
\end{prop}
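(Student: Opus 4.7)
The plan is to prove self-adjointness by directly computing the Schwartz kernel of $G_h^w(a)$ and showing it is Hermitian under the reality assumption on $a$. Recall that from (\ref{2.4}) the kernel of $G_h^w(a)$ is given, as an oscillatory integral, by
\begin{equation}
K_a(x,y)=\frac{1}{(2\pi h)^2}\int_{\mathbb{R}^2} e^{\frac{i}{h}(x-y)\cdot\xi}a\Bigl(\frac{x+y}{2},\xi\Bigr)d\xi,\notag
\end{equation}
so that $G_h^w(a)u(x)=\int K_a(x,y)u(y)\,dy$. The adjoint $G_h^w(a)^*$ then has kernel $\overline{K_a(y,x)}$. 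A quick computation gives
\begin{equation}
\overline{K_a(y,x)}=\frac{1}{(2\pi h)^2}\int_{\mathbb{R}^2} e^{-\frac{i}{h}(y-x)\cdot\xi}\overline{a\Bigl(\frac{x+y}{2},\xi\Bigr)}\,d\xi,\notag
\end{equation}
and after noticing that $e^{-\frac{i}{h}(y-x)\cdot\xi}=e^{\frac{i}{h}(x-y)\cdot\xi}$ and using that $a$ is real-valued (so $\overline{a}=a$), we obtain $\overline{K_a(y,x)}=K_a(x,y)$. Hence the kernel is Hermitian, which formally yields the identity $\langle G_h^w(a)u,v\rangle_{L^2}=\langle u,G_h^w(a)v\rangle_{L^2}$.

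The first step I would carry out carefully is the above kernel computation on Schwartz symbols. If $a\in\mathcal{S}(\mathbb{R}^2\times\mathbb{R}^2)$, the oscillatory integrals are absolutely convergent and Fubini applies; on $u,v\in\mathcal{S}(\mathbb{R}^2)$ one gets the symmetry identity rigorously. The second step is to pass to general $a\in S_\delta(M)$: choose a cutoff sequence $\chi_n\in C_0^\infty(\mathbb{R}^2\times\mathbb{R}^2)$ with $\chi_n\to 1$, set $a_n=\chi_n a\in\mathcal{S}$; these $a_n$ are real whenever $a$ is (choosing $\chi_n$ real) and converge to $a$ in the appropriate symbol topology. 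Then $G_h^w(a_n)u\to G_h^w(a)u$ in $\mathcal{S}(\mathbb{R}^2)$ for $u\in\mathcal{S}$, and the symmetry identity for $a_n$ passes to the limit, giving the symmetry of $G_h^w(a)$ as an operator from $\mathcal{S}(\mathbb{R}^2)$ to $\mathcal{S}'(\mathbb{R}^2)$.

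The third step is to upgrade symmetry to genuine self-adjointness on $L^2(\mathbb{R}^2)$. Here the natural route is to invoke the Calder\'{o}n--Vaillancourt-type $L^2$-boundedness theorem for symbols of class $S_\delta(1)$ with $0\leq\delta<1/2$ (or, more generally, the composition and $L^2$-continuity results for the symbol classes considered in the paper, cf.\ \cite{D-S,Zworski}): in the bounded case, a symmetric everywhere-defined operator on $L^2$ is automatically self-adjoint. In the unbounded case, one defines $G_h^w(a)$ on the natural dense domain $\mathcal{S}(\mathbb{R}^2)$, observes that the formal adjoint (also a Weyl quantization of a real symbol) coincides with $G_h^w(a)$ as operators on $\mathcal{S}$, and invokes essential self-adjointness arguments appropriate to the growth of $M$. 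The main technical obstacle is this last point: ensuring the symmetry we produced on $\mathcal{S}$ actually upgrades to a self-adjoint realization on $L^2$ requires either a boundedness hypothesis or an essential self-adjointness result for the relevant symbol class, which is handled by the classical references \cite{D-S,Zworski} and which the paper cites rather than reproves.
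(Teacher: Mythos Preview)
Your argument is the standard one and is correct at the level of formal symmetry: the kernel computation showing $\overline{K_a(y,x)}=K_a(x,y)$ when $a$ is real is exactly the point, and your approximation by compactly supported symbols is the right way to make this rigorous on $\mathcal{S}$. You are also right to flag that upgrading symmetry to genuine self-adjointness on $L^2$ is a separate issue requiring either $L^2$-boundedness or an essential self-adjointness argument.

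Note, however, that the paper does not actually give a proof of this proposition: it is stated with a direct citation to Theorem~4.1 of \cite{Zworski} and no argument is supplied. So there is nothing to compare your approach against in the paper itself. In practice, the only place the paper uses this result is in the energy estimate leading to Claim~\ref{813z2}, where what is really needed is that $G_h^w(x\cdot\xi+F(\xi))$ is symmetric on smooth functions so that $\frac{d}{dt}\|\mathcal{L}^2 v\|_{L^2}^2$ picks up no contribution from the linear part; your kernel computation already delivers exactly that.
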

\begin{prop}\label{pcomposition}(Theorem 7.3 in \cite{D-S})
	Let   $M$ be an order function on $\mathbb{R}^2\times\mathbb{R}^2$, $\delta\geq0$. For any $a,b\in S_\delta(M)$
	\begin{eqnarray}
		G_h^w(a\sharp b)=G_h^w(a)\circ G_h^w(b)\notag
	\end{eqnarray}
	where the Moyal product of the symbols is defined by
	\begin{equation}
		a\sharp b(x,\xi):=\frac{1}{(\pi h)^{4}}\int_{\mathbb{R}^2}\int_{\mathbb{R}^2}\int_{\mathbb{R}^2}\int_{\mathbb{R}^2}
		e^{\frac{2i}{h}(\eta \cdot z- y\cdot \zeta)}a(x+z,\xi+\zeta)b(x+y,\xi+\eta)dyd\eta dz d\zeta.\label{pcomption}
	\end{equation}
\end{prop}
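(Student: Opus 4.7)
My strategy is to compute the Schwartz kernel of the composition $G_h^w(a)\circ G_h^w(b)$, recognise it as the kernel of a Weyl quantisation, and read off the symbol. Applying (\ref{2.4}) twice yields the iterated integral representation
\[
G_h^w(a)G_h^w(b)u(x) = \frac{1}{(2\pi h)^4}\int\!\!\int\!\!\int\!\!\int e^{\frac{i}{h}[(x-y)\cdot\xi+(y-z)\cdot\eta]} a\bigl(\tfrac{x+y}{2},\xi\bigr)b\bigl(\tfrac{y+z}{2},\eta\bigr)u(z)\,dy\,d\xi\,d\eta\,dz.
\]
Writing $x=m+s/2$, $z=m-s/2$, the aim is to bring the $(y,\xi,\eta)$-integral into the form $\frac{1}{(2\pi h)^2}\int e^{\frac{i}{h}s\cdot \Xi} c(m,\Xi)\,d\Xi$ for a suitable symbol $c$, and then verify that $c=a\sharp b$ as given by (\ref{pcomption}).

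\textbf{Changes of variable.} Substitute $y=m+\tilde y$ and then pass from $(\xi,\eta)$ to $(\zeta_0,\omega_0)$ defined by $\xi=\zeta_0-\omega_0/2$, $\eta=\zeta_0+\omega_0/2$ (Jacobian one). The phase simplifies cleanly to $\frac{1}{h}(s\cdot\zeta_0+\tilde y\cdot\omega_0)$, while the midpoints and frequencies of $a,b$ become $m\pm s/4+\tilde y/2$ and $\zeta_0\mp\omega_0/2$. Fourier inversion in $s$ then extracts $c(m,\Xi)=\int e^{-\frac{i}{h}s\cdot \Xi}K_{ab}(m+s/2,m-s/2)\,ds$. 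Translating $\zeta_0\mapsto \Xi+\zeta_0$ to make the output frequency $\Xi$ appear as a base point, and finally rescaling via $s=2(z-y)$, $\tilde y=y+z$ (with block-Jacobian $16$ on $\mathbb{R}^4$) and $\zeta_0=(\zeta+\eta)/2$, $\omega_0=\eta-\zeta$ (Jacobian one), the phase collapses to $\frac{2}{h}(\eta\cdot z-y\cdot\zeta)$ and the prefactors combine as $\frac{16}{(2\pi h)^4}=\frac{1}{(\pi h)^4}$, reproducing (\ref{pcomption}) exactly.

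\textbf{Justification; main obstacle.} The formal computation above is rigorous when $a,b\in\mathcal S(\mathbb{R}^2\times\mathbb{R}^2)$: every integral is absolutely convergent and Fubini, as well as the linear changes of variable, are elementary. The real difficulty is extending the identity to general $a,b\in S_\delta(M)$, for which the integrals are only oscillatory. I would handle this by a standard cut-off approximation: choose $\chi\in C_c^\infty(\mathbb{R}^2\times\mathbb{R}^2)$ with $\chi(0,0)=1$, set $a_\epsilon=\chi(\epsilon\cdot)\,a$, $b_\epsilon=\chi(\epsilon\cdot)\,b$ (both Schwartz), apply the result to $a_\epsilon,b_\epsilon$, and pass to the limit $\epsilon\to 0$. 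The pass to the limit is the technically delicate point: one must integrate by parts in $(y,\xi,\eta)$ using non-stationary phase (exploiting the identities $\xi\cdot(x-y)e^{\frac{i}{h}(x-y)\cdot\xi}=hD_y e^{\frac{i}{h}(x-y)\cdot\xi}\cdot(x-y)$ and symmetric versions) to transfer sufficiently many derivatives onto the symbols, ensuring absolute convergence after finitely many steps and providing estimates uniform in $\epsilon$. The convergence of $a_\epsilon\sharp b_\epsilon\to a\sharp b$ in the topology of $S_\delta(M')$ for a suitable order function $M'$, together with the continuity of $a\mapsto G_h^w(a)$, then yields the claim. This is the content of Chapter 7 of \cite{D-S}.
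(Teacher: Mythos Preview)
Your derivation is correct and is essentially the standard argument: the paper itself gives no proof of this proposition but simply cites Theorem~7.3 of \cite{D-S}, so there is nothing to compare against beyond the reference you already invoke. The chain of substitutions you perform (midpoint/offset variables, the rotation $(\xi,\eta)\to(\zeta_0,\omega_0)$, and the final rescaling with Jacobian $16$ on $\mathbb{R}^4$) checks out and reproduces (\ref{pcomption}) with the right prefactor $16/(2\pi h)^4=1/(\pi h)^4$; your remarks about first proving the identity for Schwartz symbols and then passing to $S_\delta(M)$ by cut-off plus oscillatory-integral regularisation are exactly how the cited reference proceeds.
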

It is often useful to derive an asymptotic expansion for $a\sharp b$, as it allows easier computations than the integral formula  (\ref{pcomption}). For the one-dimensional case, we refer to the Appendix of \cite{S}.
\begin{prop}\label{premainder}
	Let   $M$ be an order function on $\mathbb{R}^2\times\mathbb{R}^2$, $\delta\geq0$. For any $a,b\in S_\delta(M)$
	\begin{equation}
		a\sharp b(x,\xi) =ab (x,\xi)+\frac{ih}{2} \sum_{j=1}^{2}\left|\begin{array}{c c}
			\partial_{x_j}a (x,\xi)	&\partial_{\xi_j}a (x,\xi)  \\
			\partial_{x_j}b(x,\xi) &\partial_{\xi_j}b   (x,\xi)
		\end{array}\right|
		+ r^{a\sharp b},\notag
	\end{equation}
	where
	\begin{eqnarray}
		r^{a\sharp b}
		&=&-\frac{h^2}{2}\sum_{\substack{	\alpha =(\alpha _1,\alpha _2)\\
				|\alpha |=2}} \frac{(-1)^{|\alpha _1|}}{\alpha !} \frac{1}{(\pi h)^4} \int _{\mathbb{R}^2} \int_{\mathbb{R}^2}\int _{\mathbb{R}^2} \int _{\mathbb{R}^2}  e^{\frac{2i}{h}(\eta\cdot z-y\cdot \zeta)}\int _0^1\partial_x ^{\alpha _1}\partial_\xi ^{\alpha _2}a(x+tz,\xi+t\zeta) \notag\\
		&& \qquad \times (1-t)dt \partial_x ^{\alpha _2}\partial_\xi ^{\alpha _1}b(x+y,\xi+\eta) dyd\eta dzd\zeta. \label{1}
	\end{eqnarray}
\end{prop}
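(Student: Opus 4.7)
The plan is to start directly from the Moyal product integral representation (\ref{pcomption}) of Proposition \ref{pcomposition} and perform a second-order Taylor expansion of the factor $a(x+z,\xi+\zeta)$ around $(z,\zeta)=(0,0)$, keeping the integral form of the remainder. Concretely, I would write
\begin{equation*}
a(x+z,\xi+\zeta)=a(x,\xi)+\sum_{j=1}^{2}\bigl(z_j\partial_{x_j}a+\zeta_j\partial_{\xi_j}a\bigr)(x,\xi)+2\!\!\sum_{\substack{\alpha=(\alpha_1,\alpha_2)\\|\alpha|=2}}\frac{z^{\alpha_1}\zeta^{\alpha_2}}{\alpha!}\int_0^1(1-t)\partial_x^{\alpha_1}\partial_\xi^{\alpha_2}a(x+tz,\xi+t\zeta)\,dt,
\end{equation*}
and then substitute into (\ref{pcomption}). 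The goal is to show that the zeroth-order term produces $ab(x,\xi)$, the first-order term produces the Poisson-bracket sum, and the integral remainder produces exactly the expression claimed for $r^{a\sharp b}$.

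The key mechanism in all three steps is the identity
\begin{equation*}
z_j e^{\frac{2i}{h}\eta\cdot z}=\frac{h}{2i}\partial_{\eta_j}e^{\frac{2i}{h}\eta\cdot z},\qquad \zeta_j e^{-\frac{2i}{h}y\cdot \zeta}=-\frac{h}{2i}\partial_{y_j}e^{-\frac{2i}{h}y\cdot \zeta},
\end{equation*}
which, after integration by parts in $\eta$ and $y$, transfers every monomial $z^{\alpha_1}\zeta^{\alpha_2}$ arising from the expansion into a differential operator acting on $b(x+y,\xi+\eta)$. For the zeroth-order term, no monomial is present and the remaining oscillatory integrals in $(z,\zeta)$ and then $(y,\eta)$ collapse, via the distributional identity $(\pi h)^{-2}\int e^{\frac{2i}{h}\eta\cdot z}dz=\delta(\eta)$, to the evaluation $b(x,\xi)$, giving $ab$. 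For the first-order terms, one integration by parts on $\eta_j$ converts a $z_j$ into $\frac{ih}{2}\partial_{\xi_j}b$ at the diagonal, while one integration by parts on $y_j$ converts a $\zeta_j$ into $-\frac{ih}{2}\partial_{x_j}b$; assembling the four cross-terms produces the determinantal sum $\frac{ih}{2}\sum_j\bigl(\partial_{x_j}a\,\partial_{\xi_j}b-\partial_{\xi_j}a\,\partial_{x_j}b\bigr)$.

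For the remainder, the same integration-by-parts procedure applied to $z^{\alpha_1}\zeta^{\alpha_2}$ yields a prefactor $(h/2i)^{|\alpha_1|}(-h/2i)^{|\alpha_2|}(-1)^{|\alpha_1|+|\alpha_2|}=(h/2i)^{|\alpha|}(-1)^{|\alpha_1|}=-\frac{h^2}{4}(-1)^{|\alpha_1|}$, together with the substitution $\partial_{\eta}^{\alpha_1}\partial_{y}^{\alpha_2}b(x+y,\xi+\eta)=\partial_x^{\alpha_2}\partial_\xi^{\alpha_1}b(x+y,\xi+\eta)$. Multiplying by the Taylor coefficient $\tfrac{2}{\alpha!}$ and the integral $\int_0^1(1-t)\partial_x^{\alpha_1}\partial_\xi^{\alpha_2}a(x+tz,\xi+t\zeta)\,dt$ reproduces the formula (\ref{1}) for $r^{a\sharp b}$ up to the explicit overall factor $-\frac{h^2}{2}$, which is the desired expression.

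The main technical obstacle is rigorously justifying the integration by parts and the distributional manipulations, since none of the fourfold $(y,\eta,z,\zeta)$-integrals is absolutely convergent for $a,b\in S_\delta(M)$ of non-integrable growth. This is handled in the standard way as in Chapter 7 of \cite{D-S} or Chapter 4 of \cite{Zworski}: one first inserts a Schwartz cutoff $\chi(\varepsilon y,\varepsilon\eta,\varepsilon z,\varepsilon\zeta)$ with $\chi(0)=1$, performs all manipulations (Taylor expansion, integrations by parts on $\eta_j$ and $y_j$, collapse by Fourier inversion) on the cut-off integrand, and then lets $\varepsilon\to 0^+$; the symbol estimates (\ref{2.3}) guarantee that each step produces an oscillatory integral in the sense of H\"ormander for which the limit is independent of the regularization and equal to the stated expression, completing the proof.
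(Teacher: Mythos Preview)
Your proposal is correct and follows essentially the same approach as the paper's proof: Taylor expand $a(x+z,\xi+\zeta)$ to second order with integral remainder in the Moyal integral (\ref{pcomption}), then use the identities $z^{\alpha_1}\zeta^{\alpha_2}e^{\frac{2i}{h}(\eta\cdot z-y\cdot\zeta)}=(\tfrac{h}{2i})^{|\alpha|}(-1)^{|\alpha_2|}\partial_\eta^{\alpha_1}\partial_y^{\alpha_2}e^{\frac{2i}{h}(\eta\cdot z-y\cdot\zeta)}$ and integrate by parts to transfer derivatives onto $b$, collapsing the zeroth and first order terms to $ab$ and the Poisson-bracket sum via Fourier inversion, and leaving the integral remainder as (\ref{1}). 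Your additional paragraph on regularization is a nice touch the paper omits, but the core argument is identical.
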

\begin{proof}
	From (\ref{pcomption}) and  Taylor's  formula
	\begin{eqnarray}
		a(x+z,\xi+\zeta)&=&a(x,\xi)+\sum_{\substack{	\alpha =(\alpha _1,\alpha _2)\\
				|\alpha |=1}} \partial_x ^{\alpha _1}\partial_\xi ^{\alpha _2} a(x,\xi) z^{\alpha _1}\zeta^{\alpha _2} \notag\\
		&&	+\sum_{\substack{	\alpha =(\alpha _1,\alpha _2)\\
				|\alpha |=2}}\frac{2}{\alpha !} \int_0^1 \partial_x ^{\alpha _1}\partial_\xi ^{\alpha _2} a(x+tz,\xi+t\zeta)z^{\alpha _1}\zeta^{\alpha _2}(1-t)dt,\notag
	\end{eqnarray}
	we have
	\begin{eqnarray}
		&&a\sharp b (x,\xi)\notag\\
		&=&\frac{1}{(\pi h)^4} \int _{\mathbb{R}^2} \int_{\mathbb{R}^2}\int _{\mathbb{R}^2} \int _{\mathbb{R}^2}  e^{\frac{2i}{h}(\eta\cdot z-y\cdot \zeta)}a(x,\xi)b(x+y,\xi+\eta) dyd\eta dzd\zeta \notag\\
		&&+\frac{1}{(\pi h)^4} \int _{\mathbb{R}^2} \int_{\mathbb{R}^2}\int _{\mathbb{R}^2} \int _{\mathbb{R}^2}  e^{\frac{2i}{h}(\eta\cdot z-y\cdot \zeta)} \sum_{\substack{	\alpha =(\alpha _1,\alpha _2)\\
				|\alpha |=1}} \partial_x ^{\alpha _1}\partial_\xi ^{\alpha _2} a(x,\xi) b(x+y,\xi+\eta)z^{\alpha _1}\zeta^{\alpha _2}dyd\eta dzd\zeta \notag\\
		&& +\frac{1}{(\pi h)^4} \int _{\mathbb{R}^2} \int_{\mathbb{R}^2}\int _{\mathbb{R}^2} \int _{\mathbb{R}^2}  e^{\frac{2i}{h}(\eta\cdot z-y\cdot \zeta)}\sum_{\substack{	\alpha =(\alpha _1,\alpha _2)\\
				|\alpha |=2}}\frac{2}{\alpha !} \int_0^1 \partial_x ^{\alpha _1}\partial_\xi ^{\alpha _2} a(x+tz,\xi+t\zeta)z^{\alpha _1}\zeta^{\alpha _2}\notag \\
		&& \qquad \times (1-t)dt b(x+y,\xi+\eta) dyd\eta dzd\zeta\notag\\
		&=:& I_1+I_2+I_3. \notag
	\end{eqnarray}
	By direct computation, we get
	\begin{equation}
		I_1=a (x,\xi) \int _{\mathbb{R}^2} \int _{\mathbb{R}^2} b(x+y,\xi+\eta) \delta_0(y) \delta_0 (\eta) dyd\eta =  a(x,\xi) b(x,\xi),\notag
	\end{equation}
	where  $\delta_0$ is the Dirac function.  	Integrate  $I_2$ by parts via the identity  $$z^{\alpha _1}\zeta^{\alpha _2} e^{\frac{2i}{h}(\eta\cdot z-y\cdot \zeta)}=\frac{h}{2i} (-1)^{|\alpha _2|} \partial_{\eta}^{\alpha _1}\partial_y^{\alpha _2} e^{\frac{2i}{h}(\eta\cdot z-y\cdot \zeta)},\qquad |\alpha _1|+|\alpha _2|=1$$  to get
	\begin{eqnarray}
		I_2          	&=& \frac{1}{(\pi h)^4} \sum_{\substack{	\alpha =(\alpha _1,\alpha _2)\\
				|\alpha |=1}}\int _{\mathbb{R}^2} \int_{\mathbb{R}^2}\int _{\mathbb{R}^2} \int _{\mathbb{R}^2}\frac{h}{2i} (-1)^{|\alpha _1|}   e^{\frac{2i}{h}(\eta\cdot z-y\cdot \zeta)}\partial_x ^{\alpha _1}\partial_\xi ^{\alpha _2} a(x,\xi) \notag\\
		&&\qquad \times \partial_y^{\alpha _2} \partial_{\eta} ^{\alpha _1} b(x+y,\xi+\eta)  dyd\eta dzd\zeta\notag\\
		&=& \frac{h}{2i} \sum_{\substack{	\alpha =(\alpha _1,\alpha _2)\\
				|\alpha |=1}}(-1)^{|\alpha _1|} \partial_x ^{\alpha _1}\partial_\xi ^{\alpha _2}a(x,\xi)\partial_x ^{\alpha _2}\partial_\xi ^{\alpha _1}b(x,\xi)=\frac{ih}{2} \sum_{j=1}^{2}\left|\begin{array}{c c}
			\partial_{x_j}a 	&\partial_{\xi_j}a   \\
			\partial_{x_j}b &\partial_{\xi_j}b
		\end{array}\right|.\notag
	\end{eqnarray}
	The same calculation shows that $I_3$ is given by the right-hand side of (\ref{1}).     This completes the proof of Proposition \ref{premainder}.
\end{proof}
In particular, from Proposition \ref{premainder}, we have that
\begin{equation}
	(x_k+F_k'(\xi_k))^2=	(x_k+F_k'(\xi_k))\sharp 	(x_k+F_k'(\xi_k)),\qquad k=1,2.\notag
\end{equation}
Taking the Weyl quantization, then using Proposition  \ref{pcomposition}, we see that
\begin{equation}
	\mathcal{L} ^2_k=\frac{1}{h^2}G_h^w((x_k+F_k'(\xi_k))^2),\qquad k=1,2,\
\end{equation}
which will be used  frequently  in the rest of the paper without reference.

We will use the following boundedness of the Weyl quantization.
\begin{prop}\label{pjl2}(Theorem 7.11 in \cite{D-S})
	Let $a\in S_{\delta}(1)$, $\delta\in [0,\frac{1}{2}]$, then
	\begin{equation}
		\|G^w_h(a)\|_{\mathcal{L}(L^2,L^2)}\leq C,\ \textrm{ for all }h\in(0,1].\notag
	\end{equation}
\end{prop}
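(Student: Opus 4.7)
The plan is to reduce the bound for $a \in S_\delta(1)$ with $\delta \in [0, 1/2]$ to the classical Calder\'on-Vaillancourt theorem (the case $h = 1$ with all derivatives of the symbol bounded) via a unitary semiclassical rescaling, and then prove the classical version by Cotlar-Stein almost-orthogonality. Introduce the unitary dilation $(U_h u)(x) = h^{-1/2} u(x/\sqrt{h})$ on $L^2(\mathbb{R}^2)$. A direct change of variables $(y,\xi) \mapsto (\sqrt{h}\,y,\, \sqrt{h}\,\xi)$ in (\ref{2.4}) yields
\begin{equation*}
U_h^{-1}\, G_h^w(a)\, U_h = G_1^w(\tilde a_h), \qquad \tilde a_h(x,\xi) := a(\sqrt{h}\, x,\, \sqrt{h}\, \xi,\, h).
\end{equation*}
For $a \in S_\delta(1)$, the chain rule and (\ref{2.3}) give $|\partial_x^\alpha \partial_\xi^\beta \tilde a_h| \le C_{\alpha,\beta}\, h^{(|\alpha|+|\beta|)(1/2 - \delta)} \le C_{\alpha,\beta}$ uniformly in $h \in (0, 1]$, precisely because $\delta \le 1/2$. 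Since $U_h$ is unitary, it suffices to show that for any smooth $b:\mathbb{R}^2 \times \mathbb{R}^2 \to \mathbb{C}$ with all partial derivatives bounded, $\|G_1^w(b)\|_{\mathcal{L}(L^2, L^2)}$ is controlled by finitely many such seminorms of $b$.

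To prove this reduced statement, I would apply the Cotlar-Stein lemma with a phase-space partition $\sum_{\mu \in \mathbb{Z}^4} \chi(\cdot - \mu) \equiv 1$, $\chi \in C_c^\infty(\mathbb{R}^4)$, and write $b = \sum_\mu b_\mu$ with $b_\mu(x,\xi) = \chi(x - \mu',\xi - \mu'')\, b(x,\xi)$ for $\mu = (\mu',\mu'') \in \mathbb{Z}^2 \times \mathbb{Z}^2$. Each $b_\mu$ is smooth, has all derivatives bounded uniformly in $\mu$, and is supported in a unit cube centred at $\mu$. Cotlar-Stein then reduces the bound to the almost-orthogonality estimates
\begin{equation*}
\|G_1^w(b_\mu)^* G_1^w(b_\nu)\|_{\mathcal{L}(L^2, L^2)} + \|G_1^w(b_\mu) G_1^w(b_\nu)^*\|_{\mathcal{L}(L^2, L^2)} \le C_N\, (1 + |\mu - \nu|)^{-N}, \qquad N \in \mathbb{N}.
\end{equation*}
By Propositions \ref{pz} and \ref{pcomposition}, these products equal $G_1^w(\bar b_\mu \sharp b_\nu)$ and $G_1^w(b_\mu \sharp \bar b_\nu)$ respectively; the two factors have supports separated by distance $\sim |\mu - \nu|$, and repeated integration by parts against the phase $2(\eta \cdot z - y \cdot \zeta)$ in the four-fold oscillatory integral (\ref{pcomption}) (at $h = 1$) converts this spatial separation into polynomial decay in $|\mu - \nu|$. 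A Schur test on the resulting Schwartz kernel then bounds the operator norm, and the Cotlar-Stein sum $\sup_\mu \sum_\nu (1+|\mu-\nu|)^{-N/2}$ is finite as soon as $N > 8$.

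The substantive obstacle is the almost-orthogonality estimate itself: one must convert the spatial separation of the supports of $b_\mu$ and $b_\nu$ into polynomial decay in $|\mu - \nu|$ via integration by parts in the Moyal oscillatory integral, while consuming only a fixed finite number of derivatives of $b$ so that the final constant depends on finitely many seminorms. A perhaps cleaner alternative is the Fourier / wave-packet route: after a smooth cutoff, write $b(x,\xi) = \int \widehat{B}(\tau,y)\, e^{i(x\cdot\tau + \xi\cdot y)}\, d\tau\, dy$, observe that each $G_1^w\bigl(e^{i(x\cdot\tau + \xi\cdot y)}\bigr)$ is, up to a unimodular phase, a translation-modulation operator and hence unitary on $L^2$, and conclude $\|G_1^w(b)\|_{\mathcal{L}(L^2, L^2)} \le \|\widehat{B}\|_{L^1}$, which is dominated by finitely many seminorms of $b$ after integration by parts in $(\tau, y)$. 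Both strategies are carried out in Chapter 7 of \cite{D-S}.
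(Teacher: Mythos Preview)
Your proposal is correct and follows the standard route to Calder\'on--Vaillancourt: the unitary rescaling $U_h$ reduces $S_\delta(1)$ with $\delta\le 1/2$ to the $h=1$ case with uniformly bounded seminorms, and the latter is handled by Cotlar--Stein (or the Fourier/Beals alternative you mention). The paper itself gives no proof of this proposition, citing it directly as Theorem~7.11 in \cite{D-S}; your sketch is essentially the argument found there.
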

A key role in this paper will be played by symbols $a$ verifying (\ref{2.3}) with $M(x, \xi) = <\frac{x+F'(\xi)}{\sqrt{h}}>^{-N}$,
for $N \in\mathbb{N}$. Although $M(x,\xi)$ is no longer an order function
because of the term $h^{\frac{1}{2}}$, it is useful to  keep  the notation $a\in S_\delta(M)$ whenever $a,M$ verify (\ref{2.3}).
\begin{prop}
	\label{pj1}
	Let $a\in S_{\delta}(<\frac{x+F'(\xi)}{\sqrt{h}}>^{-2})$, $\delta\in [0,\frac{1}{2}]$, then
	\begin{equation}
		\|G^w_h(a)\|_{\mathcal{L}(L^2,L^\infty)}\lesssim  h^{-\frac{1}{2}},\ \textrm{ for all }h\in(0,1].\notag
	\end{equation}
\end{prop}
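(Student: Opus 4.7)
The plan is to use a $TT^{*}$ argument together with the composition formula in order to effectively double the order of decay of the amplitude, and then reduce to a pointwise kernel bound that is finite precisely in dimension two.

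Set $T=G_h^w(a)$. By Proposition \ref{pz}, $T^{*}=G_h^w(\bar a)$, and a standard duality argument on Schwartz kernels gives the identity
\begin{equation}
\|T\|_{L^2\to L^\infty}^{2}=\|TT^{*}\|_{L^1\to L^\infty}=\sup_{x,y}|K_{a\sharp\bar a}(x,y)|,\notag
\end{equation}
where, by Proposition \ref{pcomposition}, $TT^{*}=G_h^w(a\sharp\bar a)$. It therefore suffices to establish $|K_{a\sharp\bar a}(x,y)|\lesssim h^{-1}$ uniformly in $(x,y)$.

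The first step is to verify that $a\sharp\bar a$ satisfies the pointwise bound $|a\sharp\bar a(x,\xi)|\lesssim\langle\frac{x+F'(\xi)}{\sqrt h}\rangle^{-4}$. The leading term $a\bar a$ obviously has this decay; by Proposition \ref{premainder}, the first-order Poisson bracket correction is of size $h\cdot h^{-2\delta}\langle\cdot\rangle^{-4}\le \langle\cdot\rangle^{-4}$ since $\delta\le\tfrac12$, and the remainder $r^{a\sharp\bar a}$ is controlled by standard non-stationary-phase estimates for the quadruple integral in (\ref{1}), keeping the new symbol in $S_\delta(\langle\frac{x+F'(\xi)}{\sqrt h}\rangle^{-4})$. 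The second step is the pointwise kernel estimate. From $|K_{a\sharp\bar a}(x,y)|\le (2\pi h)^{-2}\int|a\sharp\bar a(\frac{x+y}{2},\xi)|\,d\xi$, and using the splitting $F(\xi)=F_1(\xi_1)+F_2(\xi_2)$ together with the ellipticity (\ref{F}), the change of variables $\eta=F'(\xi)+\frac{x+y}{2}$ is bi-Lipschitz with Jacobian bounded above and below, so
\begin{equation}
\int_{\mathbb{R}^{2}}\Big\langle\tfrac{\frac{x+y}{2}+F'(\xi)}{\sqrt h}\Big\rangle^{-4}d\xi\lesssim \int_{\mathbb{R}^{2}}\langle\eta/\sqrt h\rangle^{-4}d\eta= h\int_{\mathbb{R}^{2}}\langle\zeta\rangle^{-4}d\zeta\lesssim h,\notag
\end{equation}
the last integral being convergent because $\langle\zeta\rangle^{-4}$ is integrable on $\mathbb{R}^{2}$. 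Combining the two steps yields $|K_{a\sharp\bar a}(x,y)|\lesssim h^{-1}$, hence $\|T\|_{L^2\to L^\infty}\lesssim h^{-1/2}$.

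The crucial feature that makes the argument work in dimension $n=2$ is precisely that the doubled weight exponent $-4$ is integrable, whereas a direct kernel bound of $G_h^w(a)$ with weight $-2$ only gives a logarithmic divergence. The main technical obstacle is therefore the treatment of the Moyal remainder $r^{a\sharp\bar a}$ at the borderline value $\delta=\tfrac12$, where the naive pointwise estimate of the quadruple oscillatory integral in (\ref{1}) is only $O(1)$ times $\langle\cdot\rangle^{-4}$; one must exploit the oscillations $e^{\frac{2i}{h}(\eta\cdot z-y\cdot\zeta)}$ together with the $h$-uniform ellipticity of $F$ to confirm that the $-4$ decay is preserved and no logarithmic losses creep in.
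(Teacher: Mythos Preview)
Your $TT^{*}$ approach is correct and genuinely different from the paper's. The paper argues directly: it rescales the kernel integral to $\sqrt h$--scale, inserts the Fourier inversion formula for $f$, integrates by parts in the phase $e^{i(\frac{x}{\sqrt h}-y)\cdot\xi+i\eta\cdot y}$ to produce decay $\langle\frac{x}{\sqrt h}-y\rangle^{-3}\langle\xi-\eta\rangle^{-3}$, and then pairs the surviving weight $\langle\frac{\frac{x+\sqrt h y}{2}+F'(\sqrt h\xi)}{\sqrt h}\rangle^{-2}$ with $\hat f(\eta/\sqrt h)$ via Young in $\xi$ and Cauchy--Schwarz in $\eta$. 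That Cauchy--Schwarz step is where the paper implicitly performs the ``squaring'' that your $TT^{*}$ argument does upfront. Your route has the conceptual merit of making transparent why the exponent $-2$ is exactly critical in two dimensions (doubling gives the integrable $\langle\zeta\rangle^{-4}$), while the paper's route is self-contained and sidesteps the Moyal calculus at the borderline $\delta=\tfrac12$ entirely. Two remarks: first, Proposition~\ref{pcomposition} as stated requires an order function, which $\langle\frac{x+F'(\xi)}{\sqrt h}\rangle^{-2}$ is not, so you should invoke it for $a,\bar a\in S_\delta(1)$ and verify the $\langle\cdot\rangle^{-4}$ decay of $a\sharp\bar a$ separately; second, your final paragraph flags the Moyal remainder at $\delta=\tfrac12$ as the obstacle but only sketches its resolution --- it does go through (rescale $(y,\eta,z,\zeta)\to\sqrt h(y,\eta,z,\zeta)$ in (\ref{1}) so each derivative on $a$ gains $\sqrt h$ and cancels the $h^{-1/2}$ loss, then use Peetre's inequality to transport the weight), but as written it is a proof outline rather than a proof.
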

\begin{proof}
	By the definition of the operator $G^w_h(a)$ in (\ref{2.4}), we have
	\begin{eqnarray}
	G^w_h(a) f(x)
	&=& \frac{1}{(2\pi)^{2}  }\int_{\mathbb{R}^2}\int_{\mathbb{R}^2}
	e^{i(\frac{x}{\sqrt{h}}-y)\cdot \xi} a(\frac{x+\sqrt{h}y}{2},\sqrt{h}\xi)
	f(\sqrt{h}y)dyd\xi\nonumber\\
	&=& \frac{1}{(2\pi)^{4}  }\int_{\mathbb{R}^2}h^{-1}\hat{f}(\frac{\eta}{\sqrt{h}})d\eta
	\int_{\mathbb{R}^2}\int_{\mathbb{R}^2}
	e^{i(\frac{x}{\sqrt{h}}-y)\cdot \xi+i\eta\cdot  y} a(\frac{x+\sqrt{h}y}{2},\sqrt{h}\xi)
	dyd\xi.\label{24x2}
\end{eqnarray}
Since  $a\in S_{\delta}(<\frac{x+F'(\xi)}{\sqrt{h}}>^{-2})$, we have that for  $\forall \ \alpha ,\beta \in \mathbb{N}^2$
\begin{eqnarray}
	|\partial_x ^\alpha \partial_{\xi}^\beta a(\frac{x+\sqrt {h}y}{2},\sqrt {h}\xi)|&\lesssim& h^{(\frac{1}{2}-\delta)(|\alpha |+|\beta|)} 	<  \frac{\frac{x+\sqrt{h}y}{2}+F'(\sqrt{h}\xi)}{\sqrt{h}}>^{-2}\nonumber\\
&\lesssim& 	<  \frac{\frac{x+\sqrt{h}y}{2}+F'(\sqrt{h}\xi)}{\sqrt{h}}>^{-2}.\label{24x1}
\end{eqnarray}
	Integrating  (\ref{24x2}) by parts via the identity
		\begin{equation}
		\left(\frac{1-i(\frac{x}{\sqrt{h}}-y)\cdot \partial_\xi}{1+|\frac{x}{\sqrt{h}}-y|^2}
		\right)^{3}\left(\frac{1+i(\xi-\eta)\cdot \partial_y}{1+|\xi-\eta|^2}
		\right)^{3} e^{i(\frac{x}{\sqrt{h}}-y)\cdot \xi+i\eta\cdot  y}= e^{i(\frac{x}{\sqrt{h}}-y)\cdot \xi+i\eta\cdot  y}\notag
	\end{equation}
and using (\ref{24x1}), we get
\begin{eqnarray}
	|G^w_h(a) f(x)|
	&\lesssim & h^{-1}\int_{\mathbb{R}^2} \left|\hat{f}(\frac{\eta}{\sqrt{h}})
	\right|d\eta
	\int_{\mathbb{R}^2}\int_{\mathbb{R}^2}
	<\frac{x}{\sqrt{h}}-y>^{-3}<\xi-\eta>^{-3} \notag\\
	&&\times
	<  \frac{\frac{x+\sqrt{h}y}{2}+F'(\sqrt{h}\xi)}{\sqrt{h}}>^{-2}
	dyd\xi.\label{10142}
\end{eqnarray}
On the other hand,  by Young's inequality, we have
\begin{eqnarray}
	&&\|\int <\xi-\eta>^{-3}	<  \frac{\frac{x+\sqrt{h}y}{2}+F'(\sqrt{h}\xi)}{\sqrt{h}}>^{-2}d\xi\|_{L^2_\eta}\notag\\
	&\lesssim &  \|<\eta>^{-3}\|_{L^1_\eta}\|	<  \frac{\frac{x+\sqrt{h}y}{2}+F'(\sqrt{h}\xi)}{\sqrt{h}}>^{-2}\|_{L^2_\xi}\lesssim 1,\label{10141}
\end{eqnarray}
	where the last inequality holds since by  (\ref{F}) and (\ref{221})
	\begin{eqnarray*}
		\|<  \frac{\frac{x+\sqrt{h}y}{2}+F'(\sqrt{h}\xi)}{\sqrt{h}}>^{-2}\|_{L^2_\xi}
		&=&\|<\frac{F'(\sqrt {h}\xi)-F'(d\phi(\frac{x+\sqrt {h}y}{2}))}{\sqrt {h}}>^{-2}\|_{L^2_\xi} \notag\\
		&\lesssim &  \|<\xi-\frac{d\phi(\frac{x+\sqrt {h}y}{2})}{\sqrt {h}}>^{-2}\|_{L^2_\xi}\lesssim 1.
	\end{eqnarray*}
Applying Cauchy-Schwartz's inequality to (\ref{10142}) and using (\ref{10141}), we obtain
\begin{equation}
	|G^w_h(a) f(x)|
	\lesssim   h^{-1}
	\left\|\hat{f}(\frac{\eta}{\sqrt{h}})
	\right\|_{L^2_\eta} \int _{\mathbb{R}^2}<\frac{x}{\sqrt {h}}-y>^{-3}dy\lesssim h^{-1/2}\|f\|_{L^2}.\notag
\end{equation}
This completes the proof of Proposition \ref{pj1}.
\end{proof}
\begin{prop}\label{pj2}
	Let  $h\in(0,1]$ and $a(\xi)$ be a smooth function that   satisfies $|\partial_\xi^\alpha a(\xi)|\leq C_\alpha <\xi>^{-2-|\alpha|}$ for any $\alpha \in \mathbb{N}^2$.  Then
	\begin{equation}
		\|G^w_h(a(\frac{x+F'(\xi)}{\sqrt{h}}))\|_{\mathcal{L}(L^2,L^\infty)}=O(h^{-\frac{1}{2}}),
		\ \|G^w_h(a(\frac{x+F'(\xi)}{\sqrt{h}}))\|_{\mathcal{L}(L^2,L^2)}=O(1).\notag
	\end{equation}
\end{prop}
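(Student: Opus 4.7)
The plan is to deduce both bounds by showing that the symbol $b(x,\xi) := a\bigl(\tfrac{x+F'(\xi)}{\sqrt{h}}\bigr)$ belongs to the class $S_{1/2}\bigl(\langle\tfrac{x+F'(\xi)}{\sqrt{h}}\rangle^{-2}\bigr)$ (and hence also to $S_{1/2}(1)$). Once this symbol membership is verified, the $L^2\to L^\infty$ bound follows directly from Proposition \ref{pj1} applied with $\delta=1/2$, and the $L^2\to L^2$ bound follows from Proposition \ref{pjl2} applied with the constant order function $M\equiv 1$. So the entire proof reduces to a symbol-class check.

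The key computation is the chain rule for $\partial_x^\alpha\partial_\xi^\beta b$. Writing $z=\tfrac{x+F'(\xi)}{\sqrt{h}}$, each $x_j$-derivative brings down a factor $h^{-1/2}$ and hits one copy of $a$; each $\xi_j$-derivative brings down $h^{-1/2}F_j''(\xi_j)$ when it acts on $z$, plus (for higher orders) factors $h^{-1/2}F_j^{(\ell)}(\xi_j)$ from derivatives landing on the inner argument. Applying Faà di Bruno and grouping the contributions, one obtains a bound of the form
\begin{equation}
|\partial_x^\alpha\partial_\xi^\beta b(x,\xi)|\le C_{\alpha,\beta}\,h^{-(|\alpha|+|\beta|)/2}\sum_{|\gamma|\le|\alpha|+|\beta|}\bigl|(\partial^\gamma a)(z)\bigr|,\notag
\end{equation}
where the constants depend on sup-norm bounds for the relevant derivatives $F_k^{(\ell)}$. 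Using the hypothesis $|\partial^\gamma a(z)|\le C_\gamma\langle z\rangle^{-2-|\gamma|}\le C_\gamma\langle z\rangle^{-2}$, this produces the required estimate
\begin{equation}
|\partial_x^\alpha\partial_\xi^\beta b(x,\xi)|\le C_{\alpha,\beta}\,h^{-(|\alpha|+|\beta|)/2}\,\bigl\langle\tfrac{x+F'(\xi)}{\sqrt{h}}\bigr\rangle^{-2},\notag
\end{equation}
which is exactly the defining estimate (\ref{2.3}) for $S_{1/2}\bigl(\langle\tfrac{x+F'(\xi)}{\sqrt{h}}\rangle^{-2}\bigr)$.

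The only mildly technical point is handling the higher $\xi$-derivatives $F_k^{(\ell)}$ with $\ell\ge 3$. The standing hypothesis (\ref{F}) only names $F_k''$ explicitly, but the smoothness $F_k\in C^\infty$ together with the elliptic/expansion-at-infinity setting exemplified just after (\ref{F}) ensures that all higher derivatives $F_k^{(\ell)}$ are uniformly bounded on $\mathbb{R}$; this is the implicit content of the "second order constant coefficients classical elliptic symbol" convention adopted in the paper (and is in line with the already-established fact $d^j\phi_k\in S_0(1)$ from Lemma \ref{lone}). With that ingredient the chain-rule step above is entirely routine, and I expect this to be the only place where one has to be careful. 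The rest is a one-line invocation of Propositions \ref{pjl2} and \ref{pj1}.
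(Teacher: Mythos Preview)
Your proposal is correct and matches the paper's own proof exactly: the paper simply asserts that $a\bigl(\tfrac{x+F'(\xi)}{\sqrt{h}}\bigr)\in S_{1/2}\bigl(\langle\tfrac{x+F'(\xi)}{\sqrt{h}}\rangle^{-2}\bigr)\cap S_{1/2}(1)$ and then invokes Propositions~\ref{pjl2} and~\ref{pj1}. Your chain-rule verification and your remark on the implicit boundedness of higher derivatives $F_k^{(\ell)}$ are more detailed than what the paper writes, but the argument is the same.
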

\begin{proof}
	Since $a(\frac{x+F'(\xi)}{\sqrt{h}})\in S_{\frac{1}{2}}(<\frac{x+F'(\xi)}{\sqrt{h}}>^{-2})\cap S_{\frac{1}{2}}(1)$, from Propositions \ref{pjl2}--\ref{pj1}, we have the estimates in  Proposition \ref{pj2}.
\end{proof}

The rest of this section is devoted to  establishing some technical results in the semiclassical framework. More precisely, Lemmas \ref{l1}--\ref{l3}   will be often recalled to compute the composition of the symbols. While  Lemmas \ref{l4}--\ref{sharp2} deal with the boundedness of the operators and the commutators of the symbols,  which will be used to prove Lemmas \ref{l4.3}--\ref{l4.4}.

\begin{lem}\label{l1}
	Suppose that  $a_{k}(x,\xi)\in S_0(1),\ 1\le k\le2$. There exist symbols $b_{k}(x,\xi)$, $c_{k}(x,\xi)$, $d_{k}(x,\xi)\in S_0(1), \ 1\le k\le2$ such that
\begin{equation}
		a_{k}(x,\xi)(x_k+F_k'(\xi_k))=a_{k}(x,\xi)\sharp (x_k+F_k'(\xi_k))+hb_{k}(x,\xi),\label{l1.1}
\end{equation}
and
\begin{equation}
	a_{k} (x,\xi) \left(x_k+F_k'(\xi_k)\right)^2 =a_{k}(x,\xi)\sharp \left(x_k+F_k'(\xi_k)\right)^2 +h c_{k}(x,\xi)\sharp (x_k+F_k'(\xi_k))+h^2d_{k}(x,\xi), \label{l1.3}
\end{equation}
	for $k=1,2$. As a corollary, we have the following estimates for the commutators
	\begin{equation}
		\|[\mathcal{L} _k,G_h^w(a_k(x,\xi))]v\|_{L_x^2}\lesssim \|v\|_{L_x^2},\ \|[\mathcal{L} ^2_k,G_h^w(a_k(x,\xi))]v\|_{L_x^2}\lesssim \|\mathcal{L} v\|_{L_x^2}+\|v\|_{L_x^2},\ k=1,2.\label{2171}
	\end{equation}
\end{lem}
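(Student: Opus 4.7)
The plan is to apply Proposition \ref{premainder} (the Moyal product expansion) with careful bookkeeping of remainders, exploiting the following structural observation about the symbol $f(x,\xi) := x_k + F_k'(\xi_k)$: its first partial derivatives $\partial_{x_j} f = \delta_{jk}$ and $\partial_{\xi_j} f = \delta_{jk} F_k''(\xi_k)$ lie in $S_0(1)$, and its only nonvanishing higher-order derivatives are $\partial_{\xi_k}^j f = F_k^{(j+1)}(\xi_k)$ for $j \ge 2$, which lie in $S_0(1)$ under the regularity assumptions on $F_k$ already used in Lemma \ref{lone}.

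For (\ref{l1.1}) I would apply Proposition \ref{premainder} with $a = a_k$ and $b = f$. The sub-principal term equals $\frac{ih}{2}\bigl(F_k''(\xi_k)\partial_{x_k} a_k - \partial_{\xi_k} a_k\bigr)$, which is $h$ times a symbol in $S_0(1)$. In the remainder $r^{a_k \sharp f}$, the structural observation shows that among all factors $\partial_x^{\alpha_2}\partial_\xi^{\alpha_1} f$ with $|\alpha_1|+|\alpha_2|=2$, only the one corresponding to $\alpha_1 = (2,0)$, $\alpha_2 = 0$ survives, and it equals $F_k'''(\xi_k) \in S_0(1)$. Standard oscillatory-integral estimates then give $r^{a_k \sharp f} \in h^2 S_0(1)$, so rearranging yields (\ref{l1.1}) with $b_k \in S_0(1)$.

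For (\ref{l1.3}) I would write $a_k \sharp f^2 = (a_k \sharp f) \sharp f$ by associativity, substitute the expression from (\ref{l1.1}) for $a_k \sharp f$, and expand $(a_k f) \sharp f$ via Proposition \ref{premainder}. The sub-principal term is $\frac{ih}{2}\{a_k f, f\} = \frac{ih}{2} f \{a_k, f\}$ (since the product rule gives $\{a_k f, f\} = a_k\{f,f\} + f\{a_k,f\} = f\{a_k,f\}$), which equals $h \cdot f \cdot g_k$ with $g_k \in S_0(1)$; converting this ordinary product back to a Moyal product by another application of (\ref{l1.1}) gives the desired $h c_k \sharp f$ modulo $h^2 S_0(1)$. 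The Moyal remainder of $(a_k f) \sharp f$ does produce a term containing an $f$-factor (coming from differentiating the non-$S_0(1)$ symbol $a_k f$), but after extracting the $f(x,\xi)$ factor it is similarly absorbed into the $h c_k \sharp f$ term, leaving a genuine $h^2 S_0(1)$ remainder. This yields (\ref{l1.3}) with $c_k, d_k \in S_0(1)$.

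The commutator estimates (\ref{2171}) then follow from (\ref{l1.1})--(\ref{l1.3}) combined with the $L^2$-boundedness of Proposition \ref{pjl2}. Indeed, using $\mathcal{L}_k = h^{-1}G_h^w(f)$ and Proposition \ref{pcomposition}, the symbol of $[\mathcal{L}_k, G_h^w(a_k)]$ is $h^{-1}(f \sharp a_k - a_k \sharp f)$; applying (\ref{l1.1}) in both orders makes the leading products $a_k f$ cancel, so the commutator equals $G_h^w(\tilde a_k)$ for some $\tilde a_k \in S_0(1)$, which is bounded on $L^2$. For $[\mathcal{L}_k^2, G_h^w(a_k)]$, using $\mathcal{L}_k^2 = h^{-2} G_h^w(f^2)$ and applying (\ref{l1.3}) in both orders yields a commutator symbol of the form $-h^{-1}(c_k'-c_k) \sharp f - (d_k'-d_k)$; its Weyl quantization equals $-G_h^w(c_k'-c_k) \mathcal{L}_k - G_h^w(d_k'-d_k)$, whose $L^2$-norm is therefore bounded by $\|\mathcal{L}_k v\|_{L^2} + \|v\|_{L^2} \le \|\mathcal{L} v\|_{L^2} + \|v\|_{L^2}$. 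The main technical obstacle throughout is establishing the $h^2 S_0(1)$ character of the various Moyal-remainder symbols: since $f$ is not itself in $S_0(1)$, one cannot invoke the general semiclassical calculus for $S_0(1)$ symbols directly, but must exploit the restricted derivative structure of $f$ noted above together with integration by parts in the oscillatory integral formula of Proposition \ref{premainder}.
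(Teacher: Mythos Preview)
Your proof is correct and, for (\ref{l1.1}) and the commutator estimates (\ref{2171}), essentially identical to the paper's argument. For (\ref{l1.3}) you take a genuinely different route: you factor $a_k\sharp f^2 = (a_k\sharp f)\sharp f$ (using the identity $f\sharp f = f^2$, established in the paper right after Proposition~\ref{premainder}) and iterate (\ref{l1.1}), whereas the paper applies Proposition~\ref{premainder} directly to $a_k\sharp f^2$. Both approaches hit the same technical obstacle --- a stray factor of $f$ evaluated at a shifted argument inside the Moyal remainder --- and both resolve it the same way, via the mean-value decomposition $f(x+tz,\xi+t\zeta)=f(x,\xi)+tz_k+t\zeta_k\int_0^1 F_k''(\xi_k+st\zeta_k)\,ds$ followed by an integration by parts to gain an extra power of $h$ on the non-$f(x,\xi)$ pieces. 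In the paper this shifted factor arises from $\partial_{\xi_k}^2 f^2 = 2(F_k'')^2 + 2fF_k'''$, while in your version it arises from $\partial_{x_k}^2(a_k f)=(\partial_{x_k}^2 a_k)f + 2\partial_{x_k}a_k$; the subsequent analysis is parallel. Your factored approach is somewhat cleaner conceptually (it reduces the second-order statement to two applications of the first-order one), while the paper's direct computation makes the structure of the remainder $r^{a_k\sharp f^2}$ more explicit, which is useful since analogous remainders recur in later lemmas.
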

\begin{proof}
	We give only the proof for  $k=1$; the case for  $k=2$ can be treated similarly. 	
	An application of Proposition \ref{premainder} yields
	\begin{eqnarray}
		a_{1}(x,\xi)\sharp (x_1+F_1'(\xi_1))=a_{1}(x,\xi)(x_1+F_1'(\xi_1))+\frac{ih}{2} H_1(x,\xi)+r^{a_{1}(x,\xi)\sharp (x_1+F_1'(\xi_1))},\notag
	\end{eqnarray}
	where $H_1(x,\xi)=\partial_{x_1} a_{1}(x,\xi)F_1''(\xi_1) -\partial_{\xi_1} a_{1}(x,\xi)\in S_0(1)$ by (\ref{F}). For (\ref{l1.1}), it remains to show that   $r^{a_{1}(x,\xi)\sharp (x_1+F_1'(\xi_1))}\in h^2S_0(1)$. By $	\int _{\mathbb{R}^2}e^{-\frac{2i}{h}y\cdot \zeta}dy =(\pi h)^2\delta (\zeta)$, we get
	\begin{eqnarray}
		&&r^{a_{1}(x,\xi)\sharp (x_1+F_1'(\xi_1))}\notag\\
		&=& \frac{h^2}{4}\frac{1}{(\pi h)^4} \int _{\mathbb{R}^2} \int_{\mathbb{R}^2}\int _{\mathbb{R}^2} \int _{\mathbb{R}^2}  e^{\frac{2i}{h}(\eta\cdot z-y\cdot \zeta)}\int _0^1\partial_{x_1}^2 a_{1} (x+tz,\xi+t\zeta) (1-t)dt F_1'''(\xi_1+\eta_1) dyd\eta dzd\zeta \notag\\
		&=& \frac{h^2}{4\pi^2}  \int _{\mathbb{R}^2} \int _{\mathbb{R}^2} e^{2i\eta\cdot z} \int _0^1 \partial_{x_1}^2a_{1} (x+t\sqrt {h}z,\xi)  (1-t)dt  F_1'''(\xi_1+\sqrt {h}\eta_1) d\eta dz
	\end{eqnarray}
Using the identity
\begin{equation}
	\left( \frac{1-2iz\cdot \partial_\eta}{1+4|z|^2}\right)^3 \left( \frac{1-2i\eta \cdot \partial_z}{1+4|\eta|^2}\right)^3 e^{2i\eta\cdot z}=e^{2i\eta\cdot z}\notag
\end{equation}
to integrate by parts, we obtain
\begin{equation}
	|r^{a_{1}(x,\xi)\sharp (x_1+F_1'(\xi_1))}| \lesssim h^2\int _{\mathbb{R}^2}\int _{\mathbb{R}^2} <z>^{-3} <\eta>^{-3} d\eta dz \lesssim h^2.\label{ls1}
\end{equation}
	Similarly, we have
	\begin{eqnarray}
		|\partial_{x} ^\alpha \partial_{\xi} ^\beta r^{a_{1}(x,\xi)\sharp (x_1+F_1'(\xi_1))}|\le C_{\alpha ,\beta} h^2\qquad \forall \alpha  ,\ \beta \in \mathbb{N}^2,\label{ls2}
	\end{eqnarray}
	i.e. $r^{a_{1}(x,\xi)\sharp (x_1+F_1'(\xi_1))}\in h^2 S_0(1)$. This completes the proof of (\ref{l1.1}).
	
	Now we  proceed to  prove (\ref{l1.3}).  By  Proposition \ref{premainder}
	\begin{eqnarray}
		a_{1}(x,\xi)\sharp \left(x_1+F_1'(\xi_1)\right)^2 &=& a_{1}(x,\xi) \left(x_1+F_1'(\xi_1)\right)^2 +ihH_2(x,\xi) \left(x_1+F_1'(\xi_1)\right) \notag\\
		&& +r^{a_{1}(x,\xi)\sharp (x_1+F_1'(\xi_1))^2},\notag
	\end{eqnarray}
	where $H_2(x,\xi)=\partial_{x_1} a_{1}(x,\xi)F_1''(\xi_1)-\partial_{\xi_1} a_{1}(x,\xi)\in S_0(1)$ by (\ref{F}). Using (\ref{l1.1}), we can write, for some  symbol $a(x,\xi)\in S_0(1)$
	\begin{equation}
		ih H_2(x,\xi)( x_1+F_1'(\xi_1))=ih H_2(x,\xi)\sharp (x_1+F_1'(\xi_1)) +h^2a(x,\xi).\label{899}
	\end{equation}
	Therefore, for (\ref{l1.3}) it suffices  to show that  $r_1^{a_{1}(x,\xi)\sharp (x_1+F_1'(\xi_1))^2}$  can be written as the form of (\ref{l1.3}). By (\ref{1}), we have
	\begin{eqnarray}
		&&	r_1^{a_{1}(x,\xi)\sharp (x_1+F_1'(\xi_1))^2} \notag\\
		&=&  -\frac{h^2}{2} \frac{1}{(\pi h)^4} \int _{\mathbb{R}^2} \int_{\mathbb{R}^2}\int _{\mathbb{R}^2} \int _{\mathbb{R}^2}  e^{\frac{2i}{h}(\eta\cdot z-y\cdot \zeta)}\int _0^1 \partial_{x_1}^2 a_{1} (x+tz,\xi+t\zeta)  (1-t)dt    \notag\\
		&&  \times \left[ \left(F_1''(\xi_1+\eta_1)\right)^2+\left(x_1+y_1+F_1'(\xi_1+\eta_1)\right)F_1'''(\xi_1+\eta_1)\right]  dyd\eta dzd\zeta \notag\\
		&&+  \frac{h^2}{(\pi h)^4} \int _{\mathbb{R}^2} \int_{\mathbb{R}^2}\int _{\mathbb{R}^2} \int _{\mathbb{R}^2}  e^{\frac{2i}{h}(\eta\cdot z-y\cdot \zeta)}\int _0^1 \partial_{x_1}\partial_{\xi_1} a_{1} (x+tz,\xi+t\zeta)   (1-t)dt F_1''(\xi_1+\eta_1) dyd\eta dzd\zeta\notag\\
		&&-\frac{h^2}{2} \frac{1}{(\pi h)^4} \int _{\mathbb{R}^2} \int_{\mathbb{R}^2}\int _{\mathbb{R}^2} \int _{\mathbb{R}^2}  e^{\frac{2i}{h}(\eta\cdot z-y\cdot \zeta)}\int _0^1 \partial_{\xi_1}^2 a_{1} (x+tz,\xi+t\zeta)  (1-t)dt dyd\eta dzd\zeta.\label{1132}
	\end{eqnarray}
It is easy to see that the  right-hand side of (\ref{1132})  belongs to the symbol class  $h^2S_0(1)$ by the similar argument as in the proof of (\ref{ls1})--(\ref{ls2}), except the term
\begin{eqnarray}
	 R(x,\xi)&=&-\frac{h^2}{2} \frac{1}{(\pi h)^4}  \int _{\mathbb{R}^8}  e^{\frac{2i}{h}(\eta\cdot z-y\cdot \zeta)}\int _0^1 \partial_{x_1}^2 a_{1} (x+tz,\xi+t\zeta)  (1-t)dt  \notag\\ && \times \left(x_1+y_1+F_1'(\xi_1+\eta_1)\right)F_1'''(\xi_1+\eta_1)  dyd\eta dzd\zeta.\notag
\end{eqnarray}
By Mean Value Theorem
\begin{equation}
	x_1+y_1+F_1'(\xi_1+\eta_1)=x_1+F_1'(\xi_1)+\eta_1 \int_0^1 F_1''(\xi_1+s\eta_1)ds +y_1, \notag
\end{equation}
we decompose  $R(x,\xi)$ accordingly into three parts
\begin{equation}
	R(x,\xi)=A(x,\xi)(x_1+F_1'(\xi_1))+R_2(x,\xi)+R_3(x,\xi).\notag
\end{equation}
Integrate  $R_2$ via  the identity    $\eta_1 e^{\frac{2i}{h}(\eta\cdot z-y\cdot\zeta)}=\frac{h}{2i} \partial_{z_1}e^{\frac{2i}{h}(\eta\cdot z-y\cdot\zeta)}$ to  get
\begin{eqnarray}
		R_{2}(x,\xi)
	&=& \frac{h^3}{4i} \frac{1}{(\pi h)^4} \int _{\mathbb{R}^2} \int_{\mathbb{R}^2}\int _{\mathbb{R}^2} \int _{\mathbb{R}^2}  e^{\frac{2i}{h}(\eta\cdot z-y\cdot \zeta)}\int _0^1 \partial_{x_1}^3 a_{1} (x+tz,\xi+t\zeta)  t(1-t)dt   \notag\\
	&&\qquad \times  F_1'''(\xi_1+\eta_1) \int _0^1 F_1''(\xi_1+s\eta_1) dsdyd\eta dzd\zeta ,\notag
\end{eqnarray}
	which belongs to the symbol class  $h^3S_0(1)$ by the similar argument as  in the proof of (\ref{ls1})--(\ref{ls2}). Moreover, a similar argument shows   that $A(x,\xi),\ R_3(x,\xi)\in h^2S_0(1)$; so that by (\ref{l1.1})
	\begin{equation}
		A(x,\xi)(x_1+F_1'(\xi_1))+R_2(x,\xi)+R_3(x,\xi)=h\widetilde{a}(x,\xi)\sharp (x_1+F_1'(\xi_1))+h^2 \widetilde{b}(x,\xi),\notag
	\end{equation}
for some symbols  $\widetilde{a}(x,\xi),\ \widetilde{b}(x,\xi)\in S_0(1)$.  	This finishes the proof of (\ref{l1.3}).

Similarly, by calculating  $(x_k+F'_k(x_k))\sharp a_k(x,\xi)$ and  $(x_k+F'_k(x_k))^2\sharp a_k(x,\xi)$, we get
\begin{equation}
	a_k(x,\xi)(x_k+F'_k(x_k))= (x_k+F'_k(x_k))\sharp a_k(x,\xi)+h\widetilde{b}_k(x,\xi),\label{2172}
\end{equation}
\begin{equation}
	a_k(x,\xi)(x_k+F'_k(x_k))^2= (x_k+F'_k(x_k))^2\sharp a_k(x,\xi)+h\widetilde{c}_k(x,\xi)\sharp (x_k+F'_k(x_k))+h^2\widetilde{d}_k(x,\xi),\label{2173}
\end{equation}
for  some symbols  $\widetilde{b}_k(x,\xi),\ \widetilde{c}_k(x,\xi),\ \widetilde{d}_k(x,\xi)\in S_0(1)$. Combining (\ref{l1.1})--(\ref{l1.3}) and (\ref{2172})--(\ref{2173}), we obtain
\begin{equation}
	(x_k+F_k'(\xi_k))\sharp a_k-a_k\sharp (x_k+F_k'(\xi_k))=h(b_k-\widetilde{b}_k),\notag
\end{equation}
\begin{equation}
	(x_k+F_k'(\xi_k))^2\sharp a_k-a_k\sharp (x_k+F_k'(\xi_k))^2=h\left[c_k\sharp (x_k+F_k'(\xi_k))-\widetilde{c}_k\sharp (x_k+F_k'(\xi_k))\right]+h^2(d_k-\widetilde{d}_k).\notag
\end{equation}
Taking the Weyl quantization, recalling the definition (\ref{L}), it follows that
$$
	[\mathcal{L} _k,G_h^w(a_k)]v=G_h^w(b_k-\widetilde{b}_k)v,$$
   $$[\mathcal{L} _k^2,G_h^w(a_k)]v=G_h^w(c_k)\circ \mathcal{L} _kv-G_h^w(\widetilde{c}_k)\circ \mathcal{L} _kv+G_h^w(d_k-\widetilde{d}_k)v,\notag
$$
for  $k=1,2.$ An application of Proposition \ref{pjl2} then yields the desired estimates (\ref{2171}).
\end{proof}

Using the same argument as in the proof of Lemma \ref{l1}, we get the following  lemma easily and omit the details.
\begin{lem}\label{l11}
	Suppose that  $a_{k}(x,\xi)\in S_0(1),\ 1\le k\le2$. There exist symbols $b_{k}(x,\xi)$, $c_{k}(x,\xi)$, $d_{k}(x,\xi)\in S_0(1), \ 1\le k\le2$ such that
	\begin{eqnarray}
		a_{k}(x,\xi)\left(\xi_k- d\phi_k(x_k) \right)=a_{k}(x,\xi)\sharp (\xi_k-d\phi_k(x_k) )+hb_{k}, \label{l1.2}
	\end{eqnarray}
	and
	\begin{eqnarray}
		a_{k}(x,\xi) \left(\xi_k-d\phi_k(x_k) \right)^2 =a_{k}(x,\xi)\sharp \left(\xi_k-d\phi_k(x_k) \right)^2+h c_{k}\sharp (\xi_k-d\phi_k(x_k))
		+h^2d_{k}, \label{l1.4}
	\end{eqnarray}
	for $k=1,2$.
\end{lem}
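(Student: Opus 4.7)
The plan is to mirror the proof of Lemma \ref{l1}, exchanging the roles of the two ``good'' coordinates: in Lemma \ref{l1} we exploited that $x_k+F_k'(\xi_k)$ has a trivial $\partial_{x_k}$-derivative equal to $1$ and that $\partial_{\xi_k}(x_k+F_k'(\xi_k))=F_k''(\xi_k)\in S_0(1)$ together with higher derivatives $F_k^{(j)}\in S_0(1)$; now I would use in the same way that $\xi_k-d\phi_k(x_k)$ has $\partial_{\xi_k}(\xi_k-d\phi_k(x_k))=1$ and that $\partial_{x_k}(\xi_k-d\phi_k(x_k))=-d^2\phi_k(x_k)$ as well as all higher $x_k$-derivatives lie in $S_0(1)$ by Lemma \ref{lone}. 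This is the only structural input needed to run the argument.

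For the first identity, I would apply Proposition \ref{premainder} with the choice $a=a_k(x,\xi)$ and $b(x,\xi)=\xi_k-d\phi_k(x_k)$. The principal correction is
\begin{equation}
\frac{ih}{2}\sum_{j=1}^{2}\left|\begin{array}{cc}\partial_{x_j}a_k & \partial_{\xi_j}a_k\\ \partial_{x_j}b & \partial_{\xi_j}b\end{array}\right|
=\frac{ih}{2}\bigl(\partial_{x_k}a_k+d^2\phi_k(x_k)\,\partial_{\xi_k}a_k\bigr),
\notag
\end{equation}
which lies in $hS_0(1)$ because $a_k\in S_0(1)$ and $d^2\phi_k\in S_0(1)$. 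The integral remainder $r^{a_k\sharp (\xi_k-d\phi_k(x_k))}$ has the explicit form (\ref{1}); since $\partial_x^{\alpha_1}\partial_\xi^{\alpha_2}(\xi_k-d\phi_k(x_k))$ is either identically zero or a derivative of $d\phi_k$, hence in $S_0(1)$, the same non-stationary integration-by-parts argument as in the derivation of (\ref{ls1})--(\ref{ls2}) gives $r^{a_k\sharp(\xi_k-d\phi_k(x_k))}\in h^2S_0(1)$. Rearranging yields (\ref{l1.2}) with a symbol $b_k\in S_0(1)$.

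For the quadratic identity, I would again apply Proposition \ref{premainder}, this time to $a_k\sharp (\xi_k-d\phi_k(x_k))^2$. The first-order correction produces a term of the form $ih\,H(x,\xi)(\xi_k-d\phi_k(x_k))$ with $H\in S_0(1)$, and I would convert this into a Moyal product by invoking the first identity (\ref{l1.2}) just proved, paying an $h^2$ error. The quadratic remainder is controlled exactly as in (\ref{1132}): most pieces are immediately in $h^2S_0(1)$ by integration by parts, while the single surviving term that carries a factor of the shifted variable $(\xi_k-d\phi_k(x_k+\text{shift}))$ would be decomposed via a mean-value step, one portion absorbed through (\ref{l1.2}) and the other handled by an additional $\eta_k$- or $y_k$-integration by parts to gain the missing power of $h$. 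This gives (\ref{l1.4}).

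The only point requiring any care is the second part of the quadratic remainder argument, where the mean-value decomposition has to be carried out so that a true factor of $\xi_k-d\phi_k(x_k)$ (evaluated at the base point) is extracted; the remaining pieces then need the correct number of integrations by parts to produce the $h$-powers. Since this is the direct analogue of the step in Lemma \ref{l1} where $x_1+y_1+F_1'(\xi_1+\eta_1)$ is expanded around $x_1+F_1'(\xi_1)$, no new idea is needed, just bookkeeping; this is presumably why the authors invoke the same argument and omit the details.
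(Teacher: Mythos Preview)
Your proposal is correct and follows exactly the approach the paper indicates: the authors explicitly say that Lemma \ref{l11} is obtained ``using the same argument as in the proof of Lemma \ref{l1}'' and omit the details. Your identification of the key structural swap---using $\partial_{\xi_k}(\xi_k-d\phi_k(x_k))=1$ and $d^j\phi_k\in S_0(1)$ from Lemma \ref{lone} in place of $\partial_{x_k}(x_k+F_k'(\xi_k))=1$ and $F_k^{(j)}\in S_0(1)$---is precisely what is needed, and the bookkeeping you outline for the quadratic remainder is the correct analogue of the $R(x,\xi)$ decomposition in Lemma \ref{l1}.
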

\begin{lem}\label{l3}
	Suppose that $\Gamma(\xi)$ is a smooth function that satisfies  $\Gamma\equiv0$ in a neighborhood of  zero and $|\partial_\xi ^\alpha \Gamma(\xi)|\le C_\alpha <\xi>^{-2-|\alpha| }$ for any $\alpha \in \mathbb{N}^2$. There exist symbols  $a_{kj}(x,\xi),b_{kj}(x,\xi)\in S_{\frac{1}{2}}(<\frac{x+F'(\xi)}{\sqrt{h}}>^{-2}),1\le k,j\le2$,  such that
	\begin{eqnarray}
		\Gamma (\frac{x+F'(\xi)}{\sqrt{h}})(x_k+F_k'(\xi_k))^2&=&\left(\Gamma (\frac{x+F'(\xi)}{\sqrt{h}}) \sharp (x_k+F_k'(\xi_k))\right) \sharp (x_k+F_k'(\xi_k)) \notag\\
		&&+\sum_{j=1}^{2}\left(ha_{kj}(x,\xi)\sharp (x_j+F_j'(\xi_j))+h^2b_{kj}(x,\xi)\right),\notag
	\end{eqnarray}
	for $k=1,2$.
\end{lem}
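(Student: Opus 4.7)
The strategy is to reduce the identity to the analysis of a single semiclassical remainder, to exploit a chain-rule cancellation specific to the symbol $\Gamma(\frac{x+F'(\xi)}{\sqrt h})$, and then to decompose the resulting expression into the required form.

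First, I will show that $(x_k+F_k'(\xi_k))\sharp(x_k+F_k'(\xi_k))=(x_k+F_k'(\xi_k))^2$ exactly: the Poisson bracket is trivially zero, and every term in the remainder formula (\ref{1}) vanishes because, for any multi-index with $|\alpha|=2$, at least one of $\partial_x^{\alpha_1}\partial_\xi^{\alpha_2}(x_k+F_k'(\xi_k))$ and $\partial_x^{\alpha_2}\partial_\xi^{\alpha_1}(x_k+F_k'(\xi_k))$ is identically zero (no second $x$-derivative and no mixed $(x,\xi)$-derivative). Associativity of $\sharp$ then identifies the first summand on the right-hand side of the lemma with $\Gamma_h\sharp(x_k+F_k'(\xi_k))^2$, writing $\Gamma_h:=\Gamma(\frac{x+F'(\xi)}{\sqrt h})$. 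The chain-rule identity $\partial_{\xi_k}\Gamma_h=F_k''(\xi_k)\,\partial_{x_k}\Gamma_h$ yields $\{\Gamma_h,\,x_k+F_k'(\xi_k)\}=0$, and hence $\{\Gamma_h,(x_k+F_k'(\xi_k))^2\}=0$, so Proposition \ref{premainder} reduces the lemma to decomposing $-r^{\Gamma_h\sharp(x_k+F_k'(\xi_k))^2}$ into the target form $\sum_{j=1}^2(h a_{kj}\sharp(x_j+F_j'(\xi_j))+h^2 b_{kj})$.

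Second, I will compute $r^{\Gamma_h\sharp(x_k+F_k'(\xi_k))^2}$ using formula (\ref{1}): only three multi-index pairs contribute, since the second derivatives of $(x_k+F_k'(\xi_k))^2$ are nonzero only in the $k$-direction. A direct evaluation of the principal (non-oscillatory) part, combined with the derived identities $\partial_{\xi_k}^2\Gamma_h=F_k'''(\xi_k)\,\partial_{x_k}\Gamma_h+(F_k''(\xi_k))^2\,\partial_{x_k}^2\Gamma_h$ and $\partial_{x_k}\partial_{\xi_k}\Gamma_h=F_k''(\xi_k)\,\partial_{x_k}^2\Gamma_h$, produces an exact cancellation of the $(F_k''(\xi_k))^2\,\partial_{x_k}^2\Gamma_h$-type contributions, which would otherwise be of order $O(h)$ and incompatible with the target form. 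What survives at the principal level is
\[
-r^{\Gamma_h\sharp(x_k+F_k'(\xi_k))^2}\;\approx\;\tfrac{h^2}{4}\,F_k'''(\xi_k)\bigl[\partial_{x_k}\Gamma_h+(x_k+F_k'(\xi_k))\,\partial_{x_k}^2\Gamma_h\bigr].
\]
The first term is already of the form $h^2 b_{kk}$ with $b_{kk}=\tfrac{1}{4}F_k'''(\xi_k)\,\partial_{x_k}\Gamma_h\in S_{1/2}(<\tfrac{x+F'(\xi)}{\sqrt h}>^{-3})$. The second term equals $h\cdot(x_k+F_k'(\xi_k))\cdot a$ with $a=\tfrac{1}{4}F_k'''(\xi_k)(\partial_k^2\Gamma)(\tfrac{x+F'(\xi)}{\sqrt h})\in S_{1/2}(<\tfrac{x+F'(\xi)}{\sqrt h}>^{-4})$; applying Proposition \ref{premainder} converts $a\cdot(x_k+F_k'(\xi_k))$ into $a\sharp(x_k+F_k'(\xi_k))-\tfrac{ih}{2}\{a,\,x_k+F_k'(\xi_k)\}-r^{a\sharp(x_k+F_k'(\xi_k))}$. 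The crucial observation is that the chain-rule cancellation recurs: any symbol of the form $\Phi(\tfrac{x+F'(\xi)}{\sqrt h})$ has vanishing Poisson bracket with $x_k+F_k'(\xi_k)$, so by the Leibniz rule $\{a,\,x_k+F_k'(\xi_k)\}=-\tfrac{1}{4}F_k^{(4)}(\xi_k)(\partial_k^2\Gamma)(\tfrac{x+F'(\xi)}{\sqrt h})$ is $O(1)$ rather than $O(h^{-1/2})$, and the correction is of order $O(h)$; multiplied by the prefactor $h$, it is absorbed into $h^2 b_{kk}$.

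The main obstacle will be controlling the oscillatory corrections to the principal evaluation. The shifted arguments in $\Gamma_h(x+tz,\xi+t\zeta)$, $F_k''(\xi_k+\eta_k)$ and $F_k'''(\xi_k+\eta_k)$, together with the decomposition $(x_k+y_k)+F_k'(\xi_k+\eta_k)=(x_k+F_k'(\xi_k))+y_k+\eta_k\int_0^1 F_k''(\xi_k+s\eta_k)\,ds$ (as in the treatment of $r^{a_1(x,\xi)\sharp(x_1+F_1'(\xi_1))^2}$ in the proof of Lemma \ref{l1}), introduce polynomial prefactors in $(y,z,\eta,\zeta)$. Each such prefactor, combined with integration by parts against the phase $e^{\frac{2i}{h}(\eta\cdot z-y\cdot\zeta)}$, yields an extra power of $h$, reducing every correction either to an $O(h^{5/2})$-sized term (absorbed into $h^2 b_{kj}$) or to an $O(h^2)\cdot(x_j+F_j'(\xi_j))$-type term (converted into $h a_{kj}\sharp(x_j+F_j'(\xi_j))+h^2 b_{kj}$ by the argument above). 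Throughout, the symbol-class bound $a_{kj},\,b_{kj}\in S_{1/2}(<\tfrac{x+F'(\xi)}{\sqrt h}>^{-2})$ follows from the decay $|\partial_\xi^\alpha\Gamma(\xi)|\le C_\alpha<\xi>^{-2-|\alpha|}$ combined with repeated integration by parts in $z,\zeta$ to produce $<\tfrac{x+F'(\xi)}{\sqrt h}>^{-N}$ weights of arbitrarily high order.
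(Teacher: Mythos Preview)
Your overall strategy—collapsing $(\Gamma_h\sharp(x_k+F_k'(\xi_k)))\sharp(x_k+F_k'(\xi_k))$ to $\Gamma_h\sharp(x_k+F_k'(\xi_k))^2$ via associativity, observing the chain-rule cancellation $\{\Gamma_h,(x_k+F_k'(\xi_k))^2\}=0$, and then analysing only the single remainder $r^{\Gamma_h\sharp(x_k+F_k'(\xi_k))^2}$—is correct and in fact more streamlined than the paper's two-step expansion. Your computation of the three contributing multi-indices and the cancellation of the $(F_k'')^2\partial_{x_k}^2\Gamma_h$ pieces is also right.

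There is, however, a genuine gap in your treatment of the surviving term $\tfrac{h^2}{4}F_k'''(\xi_k)\,\partial_{x_k}\Gamma_h$. You claim this equals $h^2 b_{kk}$ with $b_{kk}=\tfrac14 F_k'''\,\partial_{x_k}\Gamma_h\in S_{1/2}(\langle\tfrac{x+F'(\xi)}{\sqrt h}\rangle^{-3})$. But $\partial_{x_k}\Gamma_h=h^{-1/2}(\partial_k\Gamma)(\tfrac{x+F'(\xi)}{\sqrt h})$, so $|b_{kk}|\lesssim h^{-1/2}\langle\tfrac{x+F'(\xi)}{\sqrt h}\rangle^{-3}$, and $h^{-1/2}\langle\cdot\rangle^{-3}$ is \emph{not} bounded by $\langle\cdot\rangle^{-2}$ uniformly in $h$ (take the argument of order $1$). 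Thus $b_{kk}\notin S_{1/2}(\langle\cdot\rangle^{-2})$, and the term is only $O(h^{3/2})$, not $O(h^2)$; it cannot be absorbed as written. The hypothesis $|\partial^\alpha\Gamma|\lesssim\langle\xi\rangle^{-2-|\alpha|}$ alone does not repair this.

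The fix is exactly what the paper does for its analogous $h^{3/2}$-term (its claim (\ref{281})): exploit $\Gamma\equiv 0$ near $0$ to write $(\partial_k\Gamma)(\xi)=\sum_{j=1}^2\Gamma^j(\xi)\,\xi_j$ with $\Gamma^j(\xi)=\tfrac{(\partial_k\Gamma)(\xi)}{|\xi|^2}\xi_j$ smooth and $|\partial^\alpha\Gamma^j|\lesssim\langle\xi\rangle^{-2-|\alpha|}$. Substituting $\xi\mapsto\tfrac{x+F'(\xi)}{\sqrt h}$ extracts a factor $h^{-1/2}(x_j+F_j'(\xi_j))$, converting the $h^{3/2}$-term into $h\cdot(\text{symbol in }S_{1/2}(\langle\cdot\rangle^{-2}))\cdot(x_j+F_j'(\xi_j))$, after which your product-to-$\sharp$ conversion applies. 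The same device is needed inside the oscillatory corrections whenever a single $\partial_{x_k}\Gamma_h$ (rather than $\partial_{x_k}^2\Gamma_h$) appears; otherwise your corrections are $O(h^2)$, not $O(h^{5/2})$ as you assert.
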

\begin{proof}
	We consider only the case $k=1$; the case $k=2$ follows from a similar argument.  An application of  Proposition \ref{premainder} yields
	\begin{eqnarray}
		&& \left(\Gamma (\frac{x+F'(\xi)}{\sqrt{h}})\sharp (x_1+F_1'(\xi_1))\right)\sharp (x_1+F_1'(\xi_1)) \notag\\
		&=& \left(\Gamma(\frac{x+F'(\xi)}{\sqrt{h}})(x_1+F_1'(\xi_1))\right)\sharp (x_1+F_1'(\xi_1))-\frac{h}{4}a_0(x,\xi) \sharp (x_1+F_1'(\xi_1)),  \label{874}
	\end{eqnarray}
	where
	\begin{eqnarray}
		&&a_0(x,\xi)\notag\\
		&=&\frac{1}{(\pi h)^4} \int _{\mathbb{R}^2} \int_{\mathbb{R}^2}\int _{\mathbb{R}^2} \int _{\mathbb{R}^2}  e^{\frac{2i}{h}(\eta\cdot z-y\cdot \zeta)}  \int _0^1 (\partial_{\xi_1}^2\Gamma)(\frac{x+tz+F'(\xi+t\zeta)}{\sqrt {h}}) (1-t)dt \notag\\
&&\times F_1'''(\xi_1+\eta_1) dyd\eta dzd\zeta \notag \\
		&=& \frac{1}{(\pi h)^2}\int _{\mathbb{R}^2}\int _{\mathbb{R}^2}e^{\frac{2i}{h}\eta\cdot z}\int_0^1 (\partial_{\xi_1}^2\Gamma)(\frac{x+tz+F'(\xi)}{\sqrt {h}})(1-t)dtF_1'''(\xi_1+\eta_1)d\eta dz .\label{2122}
	\end{eqnarray}
We claim that
\begin{eqnarray}
		a_0(x,\xi)&=&\frac{1}{(\pi h)^2}\int _{\mathbb{R}^2}\int _{\mathbb{R}^2}e^{\frac{2i}{h}\eta\cdot z}\int_0^1 (\partial_{\xi_1}^2\Gamma)(\frac{x+tz+F'(\xi)}{\sqrt {h}})(1-t)dtF_1'''(\xi_1+\eta_1)d\eta dz \notag \\
		&\in& S_{\frac{1}{2}}(<\frac{x+F'(\xi)}{\sqrt{h}}>^{-2}).\label{2123}
\end{eqnarray}
	 By making a change of variables and then integrating by parts, we get
	\begin{eqnarray}
		|a_0 (x,\xi)|
		&=&|\frac{1}{\pi^2}\int _{\mathbb{R}^2} \int _{\mathbb{R}^2} \left( \frac{1-2iz\cdot \partial_\eta}{1+4|z|^2}\right)^5 \left( \frac{1-2i\eta \cdot \partial_z}{1+4|\eta|^2}\right)^5 e^{2i\eta\cdot z} \notag\\
		&&\qquad \times \int_0^1(\partial_{\xi_1}^2\Gamma)(\frac{x+F'(\xi)}{\sqrt {h}}+tz)(1-t)dt F_1'''(\xi_1+\sqrt {h}\eta_1)d\eta dz| \notag\\
		&\lesssim &   \int _{\mathbb{R}^2}\int _{\mathbb{R}^2} <z>^{-5}<\eta>^{-5} \int_0^1<\frac{x+F'(\xi)}{\sqrt{h}}+tz>^{-2}(1-t)dtd\eta dz \notag\\
		&\lesssim & \int _{\mathbb{R}^2}\int _{\mathbb{R}^2}<z>^{-3}<\eta>^{-5} <\frac{x+F'(\xi)}{\sqrt{h}}>^{-2}  d\eta dz \lesssim  <\frac{x+F'(\xi)}{\sqrt{h}}>^{-2},\label{ls3}
	\end{eqnarray}
	where in the second inequality we used the elementary inequality
	\begin{equation}
		<x+y>^{-2}\lesssim <x>^{2}<y>^{-2},\qquad \forall x,\ y\in \mathbb{R}^2.\notag
	\end{equation}
	Similarly, we have
	\begin{equation}
		|\partial_x^\alpha \partial_\xi ^\beta a_0(x,\xi) |\le C_{\alpha ,\beta} h^{-\frac{1}{2}(|\alpha |+|\beta|)} <\frac{x+F'(\xi)}{\sqrt{h}}>^{-2} \qquad \forall \alpha ,\ \beta \in \mathbb{N}^2,\label{ls4}
	\end{equation}
	i.e. $a_{0}(x,\xi)\in S_{\frac{1}{2}}(<\frac{x+F'(\xi)}{\sqrt{h}}>^{-2})$. This finishes the proof of (\ref{2123}).

	Next, we deal with the first term on the right-hand side of (\ref{874}).  Setting $\Gamma^{0}(\xi)=\Gamma (\xi)\xi_1$, we deduce from   Proposition \ref{premainder} that
	\begin{eqnarray}
		&& \left(\Gamma(\frac{x+F'(\xi)}{\sqrt{h}})(x_1+F_1'(\xi_1))\right)\sharp (x_1+F_1'(\xi_1))\notag\\
		&=& \Gamma(\frac{x+F'(\xi)}{\sqrt{h}})(x_1+F_1'(\xi_1))^2-\frac{h^{\frac{3}{2}}}{4} \frac{1}{(\pi h)^4} \int _{\mathbb{R}^2} \int_{\mathbb{R}^2}\int _{\mathbb{R}^2} \int _{\mathbb{R}^2}  e^{\frac{2i}{h}(\eta\cdot z-y\cdot \zeta)} \notag\\
		&&\qquad \times \int _0^1 (\partial_{\xi_1}^2 \Gamma^0)(\frac{x+tz+F'(\xi+t\zeta)}{\sqrt {h}})  (1-t)dt  F_1''' (\xi_1+\eta_1)dyd\eta dzd\zeta\notag\\
		&=&\Gamma(\frac{x+F'(\xi)}{\sqrt{h}})(x_1+F_1'(\xi_1))^2 -\frac{h^{\frac{3}{2}}}{4(\pi h)^2}\int _{\mathbb{R}^2}\int _{\mathbb{R}^2}e^{\frac{2i}{h}\eta\cdot z}\int_{0}^{1}(\partial_{\xi_1}^2 \Gamma^0)(\frac{x+tz+F'(\xi)}{\sqrt {h}})(1-t) \mathrm{d}t \notag\\
		&&\qquad \times F_1'''(\xi_1+\eta_1)d\eta dz.  \label{71310}
	\end{eqnarray}
Assume for a while that we have proved
\begin{eqnarray}
	&&\frac{h^{\frac{3}{2}}}{(\pi h)^2}\int _{\mathbb{R}^2}\int _{\mathbb{R}^2}e^{\frac{2i}{h}\eta\cdot z}\int_{0}^{1}(\partial_{\xi_1} ^2\Gamma^0)(\frac{x+tz+F'(\xi)}{\sqrt {h}})(1-t) \mathrm{d}tF_1'''(\xi_1+\eta_1)d\eta dz\notag\\
	&=& \sum_{j=1}^{2}\left(h\widetilde{a}_j(x,\xi)\sharp (x_j+F_j'(\xi_j))+h^2\widetilde{b}_j(x,\xi)\right),\label{281}
\end{eqnarray}
	for some symbols  $\widetilde{a}_j(x,\xi),\ \widetilde{b}_j(x,\xi)\in S_{\frac{1}{2}}(<\frac{x+F'(\xi)}{\sqrt{h}}>^{-2}),\ j=1,2$. Then Lemma \ref{l3} follows by substituting  (\ref{2123}), (\ref{71310}) and (\ref{281}) into (\ref{874}).
	
	In what follows, we prove (\ref{281}). Let  $\Gamma^j(\xi)=\frac{\partial_{\xi_1}^2\Gamma^0(\xi)}{|\xi|^2}\xi_j, j=1,2$, satisfying  $|\partial_\xi ^\alpha \Gamma^j(\xi)|\le C_\alpha <\xi>^{-2-|\alpha |}$ for any $\alpha \in \mathbb{N}^2$.
	By direct computation,  we have
\begin{eqnarray}
	&&\frac{h^{\frac{3}{2}}}{(\pi h)^2}\int _{\mathbb{R}^2}\int _{\mathbb{R}^2}e^{\frac{2i}{h}\eta\cdot z}\int_{0}^{1}(\partial_{\xi_1}^2 \Gamma^0)(\frac{x+tz+F'(\xi)}{\sqrt {h}})(1-t) \mathrm{d}tF_1'''(\xi_1+\eta_1)d\eta dz\notag\\
	&=& \frac{h}{(\pi h)^2}\sum_{j=1}^{2} \left[\int _{\mathbb{R}^2} \int _{\mathbb{R}^2}  e^{\frac{2i}{h}\eta\cdot z} \int_{0}^{1} \Gamma^j(\frac{x+tz+F'(\xi)}{\sqrt {h}})(1-t) \mathrm{d}tF_1'''(\xi_1+\eta_1)d\eta dz (x_j+F_j'(\xi_j)) \right. \notag\\
	&&  \left. +  \int _{\mathbb{R}^2} \int _{\mathbb{R}^2}  z_je^{\frac{2i}{h}\eta\cdot z} \int_{0}^{1} \Gamma^j(\frac{x+tz+F'(\xi)}{\sqrt {h}})(1-t) t\mathrm{d}tF_1'''(\xi_1+\eta_1)d\eta dz \right]\notag\\
	&=&h\sum_{j=1}^{2} \left[ \frac{1}{(\pi h)^2}\int _{\mathbb{R}^2} \int _{\mathbb{R}^2}  e^{\frac{2i}{h}\eta\cdot z} \int_{0}^{1} \Gamma^j(\frac{x+tz+F'(\xi)}{\sqrt {h}})(1-t) \mathrm{d}tF_1'''(\xi_1+\eta_1)d\eta dz (x_j+F_j'(\xi_j)) \right. \notag\\
	&&  \left. +   \frac{ih}{2}\frac{1}{(\pi h)^2}\int _{\mathbb{R}^2} \int _{\mathbb{R}^2}  e^{\frac{2i}{h}\eta\cdot z} \int_{0}^{1} \Gamma^j(\frac{x+tz+F'(\xi)}{\sqrt {h}})(1-t) t\mathrm{d}tF_1^{(4)}(\xi_1+\eta_1)d\eta dz \right]\notag\\
	&=:&h\sum_{j=1}^{2}\left[A_j(x,\xi)(x_j+F_j'(\xi_j))+hB_j(x,\xi)\right].\label{283}
\end{eqnarray}
 By a similar argument as that used to derive (\ref{2123}), we get that
	\begin{equation}
	A_j(x,\xi)\in S_{\frac{1}{2}}(<\frac{x+F'(\xi)}{\sqrt{h}}>^{-2}),\ B_j(x,\xi) \in  S_{\frac{1}{2}}(<\frac{x+F'(\xi)}{\sqrt{h}}>^{-2}),\ k=1,2. \notag
\end{equation}
This proves (\ref{281}),  from which Lemma \ref{l3} follows.
\end{proof}
\begin{lem}\label{l4}
	Suppose that  $\Gamma(\xi)\in C_0^\infty (\mathbb{R}^2)$ is a smooth function that satisfies  $\Gamma\equiv0$ in a neighborhood of  zero and  $a(x,\xi)\in S_0(1)$.
	Then we have the following estimates
	\begin{equation}
		\|G_h^w\left(\Gamma(\frac{x+F'(\xi)}{\sqrt{h}})a(x,\xi)\right)v\|_{L_x^\infty }\lesssim h^{1/2}(\|\mathcal{L} ^2v\|_{L_x^2}+\|\mathcal{L} v\|_{L_x^2}+\|v\|_{L_x^2}),\notag
	\end{equation}
	\begin{equation}
	\|G_h^w\left(\Gamma(\frac{x+F'(\xi)}{\sqrt{h}})a(x,\xi)\right)v\|_{L_x^2 }\lesssim h(\|\mathcal{L} ^2v\|_{L_x^2}+\|\mathcal{L} v\|_{L_x^2}+\|v\|_{L_x^2}),\notag
\end{equation}
where  $\mathcal{L} ^2=(\mathcal{L} ^2_1,\mathcal{L} ^2_2)$ with  $\mathcal{L} _k,\ k=1,2 $ defined by (\ref{L}).
\end{lem}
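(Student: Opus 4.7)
The plan is to decompose the symbol $\Gamma\bigl(\tfrac{x+F'(\xi)}{\sqrt h}\bigr)a(x,\xi)$ so as to expose factors of $h\mathcal{L}_k=G_h^w(x_k+F_k'(\xi_k))$ that can be moved onto $v$ in the estimate. Since $\Gamma\in C_0^\infty(\mathbb{R}^2)$ with $\Gamma\equiv 0$ near $0$, the function $\tilde\Gamma(\eta):=\Gamma(\eta)/|\eta|^2\in C_0^\infty(\mathbb{R}^2)$ is smooth, compactly supported, and also vanishes in a neighborhood of $0$; hence it satisfies the hypothesis of Lemma \ref{l3}. The identity $\Gamma(\eta)=\tilde\Gamma(\eta)(\eta_1^2+\eta_2^2)$ gives
$$\Gamma\bigl(\tfrac{x+F'(\xi)}{\sqrt h}\bigr)=\tfrac{1}{h}\sum_{k=1}^2\tilde\Gamma\bigl(\tfrac{x+F'(\xi)}{\sqrt h}\bigr)(x_k+F_k'(\xi_k))^2,$$
to which Lemma \ref{l3} applies term by term.

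Applying Lemma \ref{l3} with $\tilde\Gamma$ in place of $\Gamma$, summing over $k$, and dividing by $h$ produces symbols $\alpha_{kj},\beta_{kj}\in S_{1/2}(\langle\tfrac{x+F'(\xi)}{\sqrt h}\rangle^{-2})$ satisfying
$$\Gamma\bigl(\tfrac{x+F'(\xi)}{\sqrt h}\bigr)=\sum_{k}\tfrac{1}{h}\bigl(\tilde\Gamma\sharp(x_k+F_k'(\xi_k))\bigr)\sharp(x_k+F_k'(\xi_k))+\sum_{k,j}\bigl[\alpha_{kj}\sharp(x_j+F_j'(\xi_j))+h\beta_{kj}\bigr].$$
Taking the Weyl quantization via Proposition \ref{pcomposition} together with the exact identity $G_h^w((x_k+F_k'(\xi_k))^2)=h^2\mathcal{L}_k^2$ noted right after Proposition \ref{premainder}, one obtains the operator identity
$$G_h^w\bigl(\Gamma(\tfrac{x+F'(\xi)}{\sqrt h})\bigr)=h\sum_k G_h^w(\tilde\Gamma)\mathcal{L}_k^2+h\sum_{k,j}G_h^w(\alpha_{kj})\mathcal{L}_j+h\sum_{k,j}G_h^w(\beta_{kj}).$$
To insert the factor $a$, write $G_h^w(\Gamma\cdot a)=G_h^w(\Gamma)G_h^w(a)+G_h^w(\Gamma\cdot a-\Gamma\sharp a)$. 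For the first piece, commuting $\mathcal{L}_k^2$ and $\mathcal{L}_j$ through $G_h^w(a)$ via the commutator estimates of Lemma \ref{l1}, and then applying Proposition \ref{pjl2} in $L^2$ (resp.\ Proposition \ref{pj1} in $L^\infty$, which yields the sharp $h^{-1/2}$ factor on symbols localized in $\tfrac{x+F'(\xi)}{\sqrt h}$), produces the stated bounds.

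The Moyal remainder $G_h^w(\Gamma\cdot a-\Gamma\sharp a)v$ is handled via the key cancellation $(F_j''(\xi_j)\partial_{x_j}-\partial_{\xi_j})\Gamma(\tfrac{x+F'(\xi)}{\sqrt h})=0$, which holds because $F$ splits as $F_1(\xi_1)+F_2(\xi_2)$. This simplifies the first-order Moyal correction to
$$-\tfrac{ih}{2}\{\Gamma,a\}_P=-\tfrac{i\sqrt h}{2}\sum_j(\partial_{\eta_j}\Gamma)\bigl(\tfrac{x+F'(\xi)}{\sqrt h}\bigr)g_j,\qquad g_j:=\partial_{\xi_j}a-F_j''(\xi_j)\partial_{x_j}a\in S_0(1).$$
Since $\partial_\eta\Gamma$ still lies in $C_0^\infty$ and vanishes near $0$, iterating the preceding decomposition on this symbol yields a contribution bounded by $\sqrt h\cdot h=h^{3/2}$; the residual second-order Moyal remainder $r^{\Gamma\sharp a}$ from Proposition \ref{premainder} is directly $O(h)$ in $S_{1/2}(\langle\cdot\rangle^{-N})$, since the $h^2$ prefactor absorbs the $h^{-1}$ coming from two derivatives of $\Gamma(\tfrac{x+F'(\xi)}{\sqrt h})$. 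The principal obstacle is this last step: one must exploit the annihilation identity at every order of the iteration to prevent the accumulation of $h^{-1/2}$ factors from derivatives of $\Gamma(\tfrac{x+F'(\xi)}{\sqrt h})$ that would otherwise spoil the $O(h)$ and $O(h^{1/2})$ bounds in $L^2$ and $L^\infty$ respectively.
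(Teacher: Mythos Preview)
Your proof is correct and follows essentially the same route as the paper's. The paper first rewrites $\Gamma(\tfrac{x+F'(\xi)}{\sqrt h})a$ as $\sum_j \Gamma_j(\tfrac{x+F'(\xi)}{\sqrt h})\sharp a_j + hc$ (by expanding the Moyal product twice, using exactly your cancellation $E_k=\partial_{\xi_k}a-F_k''(\xi_k)\partial_{x_k}a$) and only then applies Lemma~\ref{l3} to each $\Gamma_j$; you instead apply Lemma~\ref{l3} to $\Gamma$ first and push the factor $G_h^w(a)$ through with the commutator bounds of Lemma~\ref{l1}, handling the Moyal remainder $\Gamma\cdot a-\Gamma\sharp a$ afterward by the same iteration. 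The ingredients---Lemma~\ref{l3}, the annihilation identity forcing the Poisson bracket to be $O(\sqrt h)$ rather than $O(1)$, one iteration, and the observation that $r^{\Gamma\sharp a}\in hS_{1/2}(\langle\tfrac{x+F'(\xi)}{\sqrt h}\rangle^{-2})$---are identical in both arguments.
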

\begin{proof}
Assume for a while that we have proved: there exist smooth functions  $\widetilde{\Gamma}_j(\xi)\in C_0^\infty (\mathbb{R}^2)$  that satisfies  $\widetilde{\Gamma}_j\equiv0$ in a neighborhood of  zero, and symbols  $a_j(x,\xi)\in S_0(1)$,    $b_{j k}(x,\xi)$, $  c_j(x,\xi)$, $ c(x,\xi)\in S_{\frac{1}{2}}(<\frac{x+F'(\xi)}{\sqrt{h}}>^{-2}), \ 1\le j\le 3,\ 1\le k\le2$  such that
\begin{eqnarray}
	\Gamma(\frac{x+F'(\xi)}{\sqrt{h}})a(x,\xi)
	&=&\frac{1}{h}\sum_{j=1}^{3}\sum_{k=1}^{2}\widetilde{\Gamma}_j(\frac{x+F'(\xi)}{\sqrt{h}})\sharp (x_k+F_k'(\xi_k))\sharp (x_k+F'_k(\xi_k))\sharp a_j\notag\\
	&&+\sum_{j=1}^{3}\sum_{k=1}^{2}b_{j k}\sharp (x_k+F'_k(\xi_k))\sharp a_j+h\sum_{j=1}^{3}c_j\sharp a_j+hc.\label{2141}
\end{eqnarray}
Taking the Weyl quantization in (\ref{2141}), we get
\begin{eqnarray}
	&&G_h^w(	\Gamma(\frac{x+F'(\xi)}{\sqrt{h}})a(x,\xi))v\notag\\
	&=&h\sum_{j=1}^{3}\sum_{k=1}^{2}G_h^w(\widetilde{\Gamma}_j(\frac{x+F'(\xi)}{\sqrt{h}}))\circ \mathcal{L} ^2_k \circ G_h^w(a_j)v+h\sum_{j=1}^{3}\sum_{k=1}^{2}G_h^w(b_{j k})\circ \mathcal{L} _k\circ  G_h^w(a_j)v\notag\\
		&&+h\sum_{j=1}^{3}G_h^w(c_j)\circ  G_h^w(a_j)v+hG_h^w(c)v.\notag
\end{eqnarray}
Therefore,  using Proposition \ref{pjl2} and Proposition \ref{pj1}, we estimate
	\begin{eqnarray*}
	&&\|G_h^w\left(\Gamma(\frac{x+F'(\xi)}{\sqrt{h}})a(x,\xi)\right)v\|_{L_x^\infty }\\
&\lesssim& h^{1/2}\sum_{j=1}^{3}\sum_{k=1}^{2}\left(\|\mathcal{L} _k^2\circ G_h^w(a_j)v\|_{L_x^2}+\|\mathcal{L} _k\circ G_h^w(a_j)v\|_{L_x^2}\right)+h^{1/2}\|v\|_{L_x^2},\notag
\end{eqnarray*}
\begin{eqnarray*}
	&&\|G_h^w\left(\Gamma(\frac{x+F'(\xi)}{\sqrt{h}})a(x,\xi)\right)v\|_{L_x^2 }\\
&\lesssim &h\sum_{j=1}^{3}\sum_{k=1}^{2}\left(\|\mathcal{L} _k^2\circ G_h^w(a_j)v\|_{L_x^2}+\|\mathcal{L} _k\circ G_h^w(a_j)v\|_{L_x^2}\right)+h\|v\|_{L_x^2},\notag
\end{eqnarray*}
which together with (\ref{2171}) yields the desired estimates in Lemma \ref{l4}.

We now prove (\ref{2141}).  By Proposition \ref{premainder}, we have
\begin{eqnarray}
		&&{\Gamma}(\frac{x+F'(\xi)}{\sqrt{h}})\sharp a(x,\xi)\notag\\
		&=&{\Gamma}(\frac{x+F'(\xi)}{\sqrt{h}}) a(x,\xi)+\frac{i\sqrt {h}}{2}\sum_{k=1}^{2}(\partial_{\xi_k}\Gamma )(\frac{x+F'(\xi)}{\sqrt{h}})E_k(x,\xi)+r^{\Gamma(\frac{x+F'(\xi)}{\sqrt{h}})\sharp a(x,\xi)},\label{2131}
\end{eqnarray}
where  $E_k(x,\xi)=\partial_{\xi_k}a(x,\xi)-\partial_{x_k}a(x,\xi)F_k''(\xi_k)\in S_0(1)$ by (\ref{F}).
Using Proposition \ref{premainder} again, we have, for some symbols  $H_{kj}(x,\xi)\in S_0(1)$,  $1\le k,j\le 2$,
\begin{eqnarray}
	&&(\partial_{\xi_k}\Gamma )(\frac{x+F'(\xi)}{\sqrt{h}})\sharp E_k(x,\xi)\label{2132}\\
&=&(\partial_{\xi_k}\Gamma)(\frac{x+F'(\xi)}{\sqrt{h}}) E_k(x,\xi)+\frac{i\sqrt {h}}{2}\sum_{j=1}^{2}(\partial_{\xi_k} \partial_{\xi_j}\Gamma )(\frac{x+F'(\xi)}{\sqrt{h}})H_{kj}(x,\xi)
	+r^{(\partial_{\xi_k}\Gamma)(\frac{x+F'(\xi)}{\sqrt{h}})\sharp E_k(x,\xi)}.\notag
\end{eqnarray}
Substitution of (\ref{2132}) into (\ref{2131}) yields 	
\begin{eqnarray}
	\Gamma(\frac{x+F'(\xi)}{\sqrt{h}})a(x,\xi)&=& \Gamma(\frac{x+F'(\xi)}{\sqrt{h}})\sharp a(x,\xi)-\frac{i\sqrt {h}}{2}\sum_{k=1}^{2}(\partial_{\xi_k}\Gamma)(\frac{x+F'(\xi)}{\sqrt{h}})\sharp E_k(x,\xi)\notag\\
	&&-\frac{h}{4}\sum_{j=1}^{2}\sum_{k=1}^{2}(\partial_{\xi_k}\partial_{\xi_j}\Gamma)(\frac{x+F'(\xi)}{\sqrt{h}})H_{kj}(x,\xi)-r^{\Gamma(\frac{x+F'(\xi)}{\sqrt{h}})\sharp a(x,\xi)}\notag\\
	&&+\frac{i\sqrt {h}}{2}\sum_{k=1}^{2}r^{(\partial_{\xi_k}\Gamma)(\frac{x+F'(\xi)}{\sqrt{h}})\sharp E_k(x,\xi)}.\label{2134}
\end{eqnarray}
Since  $\Gamma\equiv0$ in a neighborhood of  zero and  $H_{kj}\in S_0(1)$, we have
\begin{equation}
	(\partial_{\xi_k}\partial_{\xi_j}\Gamma)(\frac{x+F'(\xi)}{\sqrt{h}})H_{kj}(x,\xi)\in S_{\frac{1}{2}}(<\frac{x+F'(\xi)}{\sqrt{h}}>^{-2}),\qquad1\le k,j\le2. \label{2135}
\end{equation}
Moreover, by the definition in (\ref{1}) and the similar argument as in the proof of (\ref{ls3})--(\ref{ls4}), we get
\begin{equation}
	r^{\Gamma(\frac{x+F'(\xi)}{\sqrt{h}})\sharp a(x,\xi)},\ r^{(\partial_{\xi_k}\Gamma)(\frac{x+F'(\xi)}{\sqrt{h}})\sharp E_k(x,\xi)} \in hS_{\frac{1}{2}}(<\frac{x+F'(\xi)}{\sqrt{h}}>^{-2}),\ k=1,2. \label{2136}
\end{equation}
Substitution of (\ref{2135}) and (\ref{2136}) into (\ref{2134}) yields
	\begin{equation}
		{	\Gamma}(\frac{x+F'(\xi)}{\sqrt{h}})a(x,\xi)=\sum_{j=1}^{3}{\Gamma}_j(\frac{x+F'(\xi)}{\sqrt{h}})\sharp a_j(x,\xi)+h{c}(x,\xi),\label{2130}
	\end{equation}
for some smooth functions  $\Gamma_j(\xi)\in C_0^\infty (\mathbb{R}^2)$  that satisfies  $\Gamma_j\equiv0$ in a neighborhood of  zero, and symbols  $a_j(x,\xi)\in S_0(1), \ {c}(x,\xi)\in S_{\frac{1}{2}}(<\frac{x+F'(\xi)}{\sqrt{h}}>^{-2})$.

Let   $\widetilde{\Gamma}_j(\xi)=\frac{\Gamma_j(\xi)}{|\xi|^2}, j=1,2$,  satisfying  $|\partial_\xi ^\alpha \widetilde{\Gamma}_j(\xi)|\le C_\alpha  <\xi>^{-2-|\alpha |}$ for any  $\alpha \in \mathbb{N}^2$.  By Lemma \ref{l3}, there are symbols  $b_{j k}(x,\xi),\ c_j(x,\xi)\in S_{\frac{1}{2}}(<\frac{x+F'(\xi)}{\sqrt{h}}>^{-2})$ such that
\begin{eqnarray}
&&	\Gamma_j(\frac{x+F'(\xi)}{\sqrt{h}})=\frac{1}{h}\sum_{k=1}^{2}\widetilde{\Gamma}_j(\frac{x+F'(\xi)}{\sqrt{h}})(x_k+F_k'(\xi_k))^2\label{2133}\\
	&=& \frac{1}{h}\sum_{k=1}^{2}\left(\widetilde{\Gamma}_j(\frac{x+F'(\xi)}{\sqrt{h}})\sharp (x_k+F_k'(\xi_k))\right)\sharp (x_k+F_k'(\xi_k))+\sum_{k=1}^{2}b_{j k}\sharp (x_k+F_k'(\xi_k))+hc_j.\notag
\end{eqnarray}
Substitution of  (\ref{2133}) into (\ref{2130}) then gives (\ref{2141}), from which Lemma \ref{l4} follows.

\end{proof}
\begin{lem}\label{sharp1}
	Suppose that $\gamma(\xi)\in C_0^\infty (\mathbb{R}^2)$, satisfying $\gamma\equiv1$ in a neighborhood of zero. There exist  symbols $a_k(x,\xi),b_k(x,\xi)\in S_{\frac{1}{2}}(<\frac{x+F'(\xi)}{\sqrt{h}}>^{-2}),k=1,2$ such that
	\begin{eqnarray}
		r^{(x\cdot\xi+F(\xi))\sharp \gamma(\frac{x+F'(\xi)}{\sqrt{h}})}-r^{\gamma(\frac{x+F'(\xi)}{\sqrt{h}})\sharp (x\cdot \xi+F(\xi))}=\sum_{k=1}^{2}\left(ha_k(x,\xi)\sharp (x_k+F_k'(\xi_k))+h^2b_k(x,\xi)\right).\notag
	\end{eqnarray}
\end{lem}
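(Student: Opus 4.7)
Setting $A := x\cdot\xi+F(\xi)$ and $G := \gamma(\frac{x+F'(\xi)}{\sqrt{h}})$, the plan is to apply the explicit remainder formula (\ref{1}) to both $r^{A\sharp G}$ and $r^{G\sharp A}$ and exploit that $A$ has a very simple Hessian. Since $A$ is affine in $x$ and $F$ depends only on $\xi$, its only nonzero second-order derivatives are $\partial_{x_j}\partial_{\xi_k}A=\delta_{jk}$ and $\partial_{\xi_j}\partial_{\xi_k}A=\delta_{jk}F_k''(\xi_k)$. Consequently, the sum over $|\alpha|=2$ in (\ref{1}) collapses to only two families of multi-indices in each case: $(\alpha_1,\alpha_2)\in\{(e_k,e_k),(0,2e_k)\}$ for $r^{A\sharp G}$, and $\{(e_k,e_k),(2e_k,0)\}$ for $r^{G\sharp A}$, with $k=1,2$.

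First I would show that the $(e_k,e_k)$ contributions cancel in the difference. In both remainders the integrand contains the constant factor $\partial_{x_k}\partial_{\xi_k}A=1$ multiplying a second derivative of $G$; using the identities $\int dz\,e^{2i\eta\cdot z/h}=(\pi h)^2\delta(\eta)$ and $\int dy\,e^{-2iy\cdot\zeta/h}=(\pi h)^2\delta(\zeta)$ to collapse the oscillatory integrals (as in the evaluation of $I_1$ in the proof of Proposition \ref{premainder}), both contributions reduce to $\frac{h^2}{4}\partial_{x_k}\partial_{\xi_k}G(x,\xi)$ and cancel.

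Next, the $(0,2e_k)$ contribution in $r^{A\sharp G}$ and the $(2e_k,0)$ contribution in $r^{G\sharp A}$ both pair $F_k''$ with $\partial_{x_k}^2 G$ but at different evaluation points. Taylor-expanding $F_k''(\xi_k+t\zeta_k)=F_k''(\xi_k)+t\zeta_k\int_0^1 F_k'''(\xi_k+st\zeta_k)\,ds$ and $F_k''(\xi_k+\eta_k)=F_k''(\xi_k)+\eta_k\int_0^1 F_k'''(\xi_k+s\eta_k)\,ds$, the constant $F_k''(\xi_k)$ pieces again collapse by the delta-function argument to the common value $\frac{1}{2}F_k''(\xi_k)\partial_{x_k}^2 G(x,\xi)$ in both remainders, and cancel. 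For the residual $\zeta_k$- and $\eta_k$-weighted pieces, I would integrate by parts using $\zeta_k\,e^{-2iy\cdot\zeta/h}=-\frac{h}{2i}\partial_{y_k}e^{-2iy\cdot\zeta/h}$ and $\eta_k\,e^{2i\eta\cdot z/h}=\frac{h}{2i}\partial_{z_k}e^{2i\eta\cdot z/h}$ to transfer the derivative onto $G$, gaining one power of $h$ and converting $\partial_{x_k}^2 G$ into $\partial_{x_k}^3 G$.

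The resulting contribution to $r^{A\sharp G}-r^{G\sharp A}$ is $-\frac{h^2}{4}$ times integrals whose integrand has the schematic form $F_k'''(\cdot)\cdot\widetilde{\gamma}_k(\frac{x+F'(\xi)}{\sqrt{h}})\cdot(\text{bounded})$, with $\widetilde{\gamma}_k\in C^\infty$ vanishing identically in a neighborhood of zero (since $\widetilde{\gamma}_k$ originates from $\gamma'''$ and $\gamma\equiv1$ there). Writing $\widetilde{\gamma}_k(y)=|y|^2 h_k(y)$ with $h_k$ smooth --- possible because $\widetilde{\gamma}_k$ vanishes on a neighborhood of zero --- and using $|x+F'(\xi)|^2=\sum_j(x_j+F_j'(\xi_j))^2$, each such piece becomes a sum over $j$ of factors of the form $h_k(\frac{x+F'(\xi)}{\sqrt{h}})(x_j+F_j'(\xi_j))^2$ times bounded $\xi$-dependent weights. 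Applying Lemma \ref{l3} to each such factor and repackaging via Proposition \ref{premainder} then yields the decomposition $\sum_k(ha_k\sharp(x_k+F_k'(\xi_k))+h^2 b_k)$ with $a_k,b_k\in S_{\frac{1}{2}}(<\frac{x+F'(\xi)}{\sqrt{h}}>^{-2})$. The main obstacle is the bookkeeping of the numerous terms produced by the Taylor expansions and integrations by parts, together with the careful verification of symbol-class memberships; the structural fact that makes everything work is the matching between the two remainders that causes all ``leading'' contributions (the $(e_k,e_k)$ parts and the constant-$F_k''(\xi_k)$ parts) to cancel precisely in the difference, leaving only terms that admit the desired decomposition.
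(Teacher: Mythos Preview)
Your approach is essentially the same as the paper's: compute both remainders via Proposition~\ref{premainder}, observe that the $(e_k,e_k)$ pieces and the constant-$F_k''(\xi_k)$ parts cancel in the difference, then use the mean value theorem on $F_k''$ together with one integration by parts to gain a power of $h$ and convert $\gamma_{kk}$ into $\gamma_{kkk}$, and finally decompose the result using the machinery from the proof of Lemma~\ref{l3}. There is, however, one imprecision in your last step. After the integration by parts the third derivative of $\gamma$ still sits inside an oscillatory integral with a shifted argument, e.g.\ $\gamma_{kkk}\!\bigl(\tfrac{x+y+F'(\xi)}{\sqrt{h}}\bigr)$ or $\gamma_{kkk}\!\bigl(\tfrac{x+tz+F'(\xi)}{\sqrt{h}}\bigr)$; it is \emph{not} evaluated at $\tfrac{x+F'(\xi)}{\sqrt{h}}$. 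Hence writing $\widetilde{\gamma}_k(y)=|y|^2 h_k(y)$ does not directly produce factors of $(x_j+F_j'(\xi_j))^2$, and Lemma~\ref{l3} as stated cannot be applied to the resulting expression. What one does instead --- and what the paper means by ``the same method as that used to derive (\ref{281})'' --- is to split the shifted argument as $(x_j+F_j'(\xi_j))+y_j$ (or $+tz_j$), pull out the $(x_j+F_j'(\xi_j))$ factor, and handle the extra $y_j$ (or $tz_j$) piece with one more integration by parts, exactly as in (\ref{283}). With that correction your argument goes through and coincides with the paper's.
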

	\begin{proof}
	For simplicity, we use the notation $\gamma_{ij}(\xi)=\partial^2_{\xi_i\xi_j}\gamma(\xi),\ \gamma_{iii}(\xi)=\partial^3_{\xi_i}\gamma(\xi)$, $1\le i,j\le2$. By Proposition \ref{premainder}
	\begin{eqnarray}
		&& 	r^{(x\cdot\xi+F(\xi))\sharp \gamma(\frac{x+F'(\xi)}{\sqrt{h}})}-r^{\gamma(\frac{x+F'(\xi)}{\sqrt{h}})\sharp (x\cdot \xi+F(\xi))} \notag\\
		&=& \frac{h}{4}\frac{1}{(\pi h)^4} \sum_{k=1}^{2}\int _{\mathbb{R}^2} \int_{\mathbb{R}^2}\int _{\mathbb{R}^2} \int _{\mathbb{R}^2}  e^{\frac{2i}{h}(\eta\cdot z-y\cdot \zeta)}\left[-\int _0^1F_k''(\xi_k+t\zeta_k) (1-t)dt \right.\notag\\
		&&\left. \times  \gamma_{kk}(\frac{x+y+F'(\xi+\eta)}{\sqrt {h}}) +2\int _0^1(1-t)dt  \gamma_{12}(\frac{x+y+F'(\xi+\eta)}{\sqrt {h}})F_k''(\xi_k+\eta_k) \right. \notag\\
		&&\left. +\int _0^1 \gamma_{kk}(\frac{x+tz+F'(\xi+t\zeta)}{\sqrt {h}})(1-t)dt F_k''(\xi_k+\eta_k)\right. \notag\\
		&&\left. -2\int _0^1 \gamma_{12}(\frac{x+tz+F'(\xi+t\zeta)}{\sqrt {h}})F_k''(\xi_k+t\zeta_k) (1-t)dt \right]dyd\eta dzd\zeta \notag\\
		&=& \frac{h}{4}\frac{1}{(\pi h)^2}\sum_{k=1}^{2} \left[-\int _{\mathbb{R}^2}\int _{\mathbb{R}^2}e^{-\frac{2i}{h}y\cdot \zeta}\int_0^1 F_k''(\xi_k+t\zeta_k)(1-t)dt\gamma_{kk}(\frac{x+y+F'(\xi)}{\sqrt{h}})dyd\zeta\right. \notag\\
		&&\left.+ \int _{\mathbb{R}^2}\int _{\mathbb{R}^2}e^{\frac{2i}{h}\eta\cdot z}\int_0^1 \gamma_{kk} (\frac{x+tz+F'(\xi)}{\sqrt{h}})(1-t)dtF_k''(\xi_k+\eta_k)d\eta d z\right]\notag\\
		&=& \frac{h}{4}\frac{1}{(\pi h)^2}\sum_{k=1}^{2} \left(A_k(x,\xi)+B_k(x,\xi)\right),\label{1083}
	\end{eqnarray}
where
\begin{equation}
	A_k=-\int _{\mathbb{R}^2}\int _{\mathbb{R}^2}e^{-\frac{2i}{h}y\cdot \zeta}\int_0^1 \left(F_k''(\xi_k+t\zeta_k)-F_k''(\xi_k)\right)(1-t)dt \gamma_{kk}(\frac{x+y+F'(\xi)}{\sqrt{h}})dyd\zeta,\notag
\end{equation}
\begin{equation}
	B_k=\int _{\mathbb{R}^2}\int _{\mathbb{R}^2}e^{\frac{2i}{h}\eta\cdot z}\int_0^1 \gamma_{kk} (\frac{x+tz+F'(\xi)}{\sqrt{h}})(1-t)dt\left(F_k''(\xi_k+\eta_k)-F_k''(\xi_k)\right)d\eta d \notag. \notag
\end{equation}
For  $A_k$, using the Mean Value Theorem and the identity  $\zeta_k e^{-\frac{2i}{h}y\cdot \zeta}=-\frac{h}{2i}\partial_{y_k} e^{-\frac{2i}{h}y\cdot \zeta}$ to integrate by parts,  we get
	\begin{eqnarray}
	\frac{h}{4}\frac{1}{(\pi h)^2}A_k(x,\xi)&=& \frac{ih^{\frac{3}{2}}}{8(\pi h)^2}\int _{\mathbb{R}^2}  \int _{\mathbb{R}^2}  e^{-\frac{2i}{h} y\cdot \zeta}\int _0^1\int_0^1 F_k'''(\xi_k+st\zeta_k) t(1-t)dsdt\notag\\
	&&\times \gamma_{kkk}(\frac{x+y+F'(\xi)}{\sqrt {h}}) dyd\zeta.\notag
\end{eqnarray}
Using the same method as that used to derive (\ref{281}), we can write, for some symbols  $a_{kj}(x,\xi)$, $ b_{kj}(x,\xi)\in S_{\frac{1}{2}}(<\frac{x+F'(\xi)}{\sqrt{h}}>^{-2}) $,  $1\le j\le2$
\begin{eqnarray}
	\frac{h}{4}\frac{1}{(\pi h)^2}A_k(x,\xi)=\sum_{j=1}^{2}\left(ha_{kj}(x,\xi)\sharp (x_j+F_j'(\xi_j))+h^2b_{kj}(x,\xi)\right).\label{1084}
\end{eqnarray}
For  $B_k$, we have a similar representation  like (\ref{1084}).
 Lemma \ref{sharp1} then follows by substituting  the representation of  $A_k,\ B_k$ into (\ref{1083}).
	\end{proof}

\begin{lem}
	\label{sharp2}
	Suppose that $\gamma(\xi)\in C_0^\infty (\mathbb{R}^2)$, satisfying $\gamma\equiv1$ in a neighborhood of zero. There exist symbols  $a_k(x,\xi),b_k(x,\xi)\in S_{\frac{1}{2}}(<\frac{x+F'(\xi)}{\sqrt{h}}>^{-2})$, $k=1,2$ such that
	\begin{eqnarray}
		r^{w(x)\sharp \gamma(\frac{x+F'(\xi)}{\sqrt{h}})}-r^{\gamma(\frac{x+F'(\xi)}{\sqrt{h}})\sharp w(x)} =\sum_{ k=1}^{2} \left(ha_k(x,\xi)\sharp (x_k+F_k'(\xi_k))+h^2b_k(x,\xi)\right),\notag
	\end{eqnarray}
	where $w(x)=x\cdot d\phi(x)+F(d\phi(x))$.
\end{lem}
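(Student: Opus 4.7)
The plan is to follow closely the strategy used in the proof of Lemma \ref{sharp1}, exploiting the extra simplification that $w(x)$ depends only on $x$. First I would expand both remainders using formula (\ref{1}) from Proposition \ref{premainder}. Because $\partial_\xi^{\alpha_2} w(x) = 0$ unless $\alpha_2 = 0$, only multi-indices with $\alpha_2 = 0$ and $|\alpha_1| = 2$ contribute to $r^{w(x)\sharp\gamma(\cdot)}$; symmetrically, only $\alpha_1 = 0$ and $|\alpha_2| = 2$ survive in $r^{\gamma(\cdot)\sharp w(x)}$. A further simplification comes from the product structure of $w$: differentiating the relation $x_k + F_k'(d\phi_k(x_k)) = 0$ gives $\partial_{x_k} w(x) = d\phi_k(x_k)$, so $\partial_{x_k}^2 w = d^2\phi_k(x_k) \in S_0(1)$ by Lemma \ref{lone} and the mixed derivative $\partial_{x_1}\partial_{x_2} w \equiv 0$. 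Thus only the two contributions $\alpha = (2,0)$ and $\alpha = (0,2)$ remain in each remainder.

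Next I would use the Dirac-delta identity $\int_{\mathbb{R}^2} e^{\pm\frac{2i}{h}\sigma\cdot\tau}d\tau = (\pi h)^2\delta(\sigma)$ to integrate out the variables that do not appear in the respective integrands: $\zeta$ (and then $y$) in $r^{w\sharp\gamma}$, and $\eta$ (and then $z$) in $r^{\gamma\sharp w}$. This reduces each remainder to an oscillatory integral over four variables with a factor $h^2/(\pi h)^2 = h/\pi^2$ (up to constants). Performing the variable change $(y,\zeta)\to(z,-\eta)$ in the expression for $r^{\gamma\sharp w}$ puts both remainders on the same oscillatory structure $e^{\frac{2i}{h}\eta\cdot z}$, with integrands involving $d^2\phi_k(x_k+tz_k)$ or $d^2\phi_k(x_k+z_k)$ and $\partial_{\xi_k}^2\gamma(\frac{x+F'(\xi+\eta)}{\sqrt h})$ or $\partial_{\xi_k}^2\gamma(\frac{x+F'(\xi-t\eta)}{\sqrt h})$.

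The difference between the two integrals can then be handled in two pieces. The first piece, a difference of the $d^2\phi_k$ factors, is addressed by the Mean Value Theorem combined with integration by parts via $z_k e^{\frac{2i}{h}\eta\cdot z} = \frac{h}{2i}\partial_{\eta_k}e^{\frac{2i}{h}\eta\cdot z}$, which transfers a derivative onto $\gamma$ and produces an extra factor of $h$. The second piece, a difference of the $\gamma(\cdot)$ terms, is treated by writing $F'(\xi+\eta) - F'(\xi-t\eta) = \eta_k\int_0^1 F_k''(\xi_k + (s(1+t)-t)\eta_k)(1+t)ds$; the $\eta_k$ so produced is then integrated by parts via $\eta_k e^{\frac{2i}{h}\eta\cdot z} = \frac{h}{2i}\partial_{z_k}e^{\frac{2i}{h}\eta\cdot z}$. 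The resulting symbols, exactly as in the derivation of equation (\ref{281}) inside Lemma \ref{l3}, lie in $S_{\frac{1}{2}}(\langle\frac{x+F'(\xi)}{\sqrt h}\rangle^{-2})$ by the same oscillatory-integral estimates (\ref{ls3})--(\ref{ls4}).

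Finally, each resulting term is of the form $h\,\tilde a_k(x,\xi)(x_k+F_k'(\xi_k)) + h^2\,\tilde b_k(x,\xi)$ (the factor $(x_k+F_k'(\xi_k))$ arises from Taylor-expanding $F_k''(\xi_k+\eta_k) - F_k''(\xi_k)$ and using the same $\Gamma^j(\xi) = \xi_j\Gamma(\xi)/|\xi|^2$ trick already exploited in Lemma \ref{l3}), and converting the ordinary product $\tilde a_k(x_k+F_k'(\xi_k))$ into a Moyal product $a_k\sharp(x_k+F_k'(\xi_k))$ via Lemma \ref{l1} absorbs the correction into an $h^2 b_k$ remainder. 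The main obstacle will be bookkeeping: each $\partial_\xi$ applied to $\gamma(\frac{x+F'(\xi)}{\sqrt h})$ brings down an $h^{-1/2}$ factor through the chain rule, so one must keep careful track of the powers of $h$ to ensure the final residuals genuinely land in the class $S_{\frac{1}{2}}(\langle\frac{x+F'(\xi)}{\sqrt h}\rangle^{-2})$ and the factorization of $(x_k+F_k'(\xi_k))$ is compatible with the oscillatory integral estimates above.
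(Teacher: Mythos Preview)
Your proposal is correct and follows essentially the same route as the paper. The only organizational difference is that the paper, rather than changing variables to put both reduced integrals on the same phase $e^{\frac{2i}{h}\eta\cdot z}$ and then splitting the combined integrand, instead subtracts $d^2\phi_k(x_k)$ from each of the two reduced integrals separately; the constant contributions cancel by one more delta evaluation, leaving two independent pieces $A_k$ and $B_k$, each already carrying a factor $d^2\phi_k(x_k+\cdot)-d^2\phi_k(x_k)$ ready for MVT plus integration by parts. One small slip in your last paragraph: the factor $(x_k+F_k'(\xi_k))$ here does not come from Taylor-expanding $F_k''(\xi_k+\eta_k)-F_k''(\xi_k)$ (that is the mechanism in Lemma~\ref{sharp1}); in the present lemma it arises solely from the $\Gamma^j(\xi)=\xi_j\Gamma(\xi)/|\xi|^2$ trick applied to the third $\xi$-derivative of $\gamma$, exactly as in (\ref{281})--(\ref{283}).
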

\begin{proof}
	Set $\Gamma^j_k(x,\xi)=\partial_{\xi_k}^j\left(\gamma(\frac{x+F'(\xi)}{\sqrt{h}})\right),k=1,2,\ j\in \mathbb{N}$. Since
	\begin{equation}
		w(x)=x\cdot d\phi(x)+F(d\phi(x))=\sum_{k=1}^{2}\left(x_kd\phi_k(x_k)+F_k(d\phi_k(x_k))\right),\notag
	\end{equation}
we have by (\ref{221})
\begin{equation}
	\partial_{x_k}w(x)=d\phi_k(x_k)+(x_k+F'_k(d\phi_k(x_k)))d^2\phi_k(x_k)=d\phi_k(x_k);\label{w1}
\end{equation}
so that
\begin{equation}\label{w2}
\partial_{x_1}\partial_{x_2}w(x)=0,\qquad \partial_{x_k}\partial_{x_k}w(x)=d^2\phi_k(x_k),\qquad k=1,2.
\end{equation}
It then follows from  Proposition \ref{premainder} that
	\begin{eqnarray}
	&&r^{w(x)\sharp \gamma(\frac{x+F'(\xi)}{\sqrt{h}})}-r^{\gamma(\frac{x+F'(\xi)}{\sqrt{h}})\sharp w(x)}  \notag\\
	&=& \frac{h^2}{4}\frac{1}{(\pi h)^4} \sum_{k=1}^{2}  \int _{\mathbb{R}^2} \int_{\mathbb{R}^2}\int _{\mathbb{R}^2} \int _{\mathbb{R}^2}  e^{\frac{2i}{h}(\eta\cdot z-y\cdot \zeta)}\left[-\int _0^1 d^2\phi_k(x_k+tz_k)(1-t)dt\right.  \notag\\
	&&\left.\times\Gamma^2_k(x+y,\xi+\eta)\int _0^1\Gamma^2_k(x+tz,\xi+t\zeta) (1-t)dt d^2\phi_k(x_k+y_k) dyd\eta dzd\zeta\right] \notag\\
	&=& \frac{h^2}{4}\frac{1}{(\pi h)^2} \sum_{k=1}^{2} \left[-\int _{\mathbb{R}^2} \int _{\mathbb{R}^2}  e^{\frac{2i}{h}\eta\cdot z}\int _0^1 d^2\phi_k(x_k+tz_k)(1-t)dt \Gamma^2_k(x,\xi+\eta)d\eta dz\right. \notag\\
	&&\left.+ \int _{\mathbb{R}^2} \int _{\mathbb{R}^2}  e^{-\frac{2i}{h}y\cdot \zeta}\int _0^1\Gamma^2_k(x,\xi+t\zeta) (1-t)dt d^2\phi_k(x_k+y_k) dyd\zeta\right]\notag\\
	&=&\frac{h^2}{4}\frac{1}{(\pi h)^2}\sum_{k=1}^{2} \left(A_k(x,\xi)+B_k(x,\xi)\right),\label{1085}
\end{eqnarray}
where
\begin{equation}
	A_k=-\int _{\mathbb{R}^2} \int _{\mathbb{R}^2}  e^{\frac{2i}{h}\eta\cdot z}\int _0^1 (d^2\phi_k(x_k+tz_k)-d^2\phi_k(x_k))(1-t)dt \Gamma^2_k(x,\xi+\eta)d\eta dz,\notag
\end{equation}
\begin{equation}
	B_k= \int _{\mathbb{R}^2} \int _{\mathbb{R}^2}  e^{-\frac{2i}{h}y\cdot \zeta}\int _0^1\Gamma^2_k(x,\xi+t\zeta) (1-t)dt (d^2\phi_k(x_k+y_k)-d^2\phi_k(x_k)) dyd\zeta.\notag
\end{equation}
For  $B_k$, using the  Mean Value Theorem and the identity $y_ke^{-\frac{2i}{h}y\cdot \zeta}=-\frac{h}{2i}\partial_{\zeta_k}e^{-\frac{2i}{h}y\cdot \zeta}$ to integrate by part, we obtain
\begin{eqnarray}
	\frac{h^2}{4}\frac{1}{(\pi h)^2}B_k(x,\xi)&=&\int _{\mathbb{R}^2} \int _{\mathbb{R}^2}  e^{-\frac{2i}{h}y\cdot \zeta}\int _0^1h^3\Gamma^3_k(x,\xi+t\zeta) (1-t)tdt \notag\\
	&&\times \int_0^1 d^3\phi_k(x_k+sy_k)ds dyd\zeta,\notag
\end{eqnarray}
where
\begin{eqnarray}
	h^3\Gamma^3_k(x,\xi)&=&h^{\frac{3}{2}}(\partial_{\xi_k}^3\gamma)(\frac{x+F'(\xi)}{\sqrt{h}})(F_k''(\xi_k))^3+3h^2(\partial_{\xi_k}^2\gamma)(\frac{x+F'(\xi)}{\sqrt{h}})F_k''(\xi_k)F_k'''(\xi_k)\notag\\
	&&+h^{\frac{5}{2}}(\partial_{\xi_k}\gamma)(\frac{x+F'(\xi)}{\sqrt{h}})F_k^{(4)}(\xi_k),\ k=1,2. \notag
\end{eqnarray}
Using the same method as that used to derive (\ref{281}), we can write, for some symbols $a_{kj}(x,\xi)$, $b_{kj}(x,\xi)\in S_{\frac{1}{2}}(<\frac{x+F'(\xi)}{\sqrt{h}}>^{-2}) $,  $1\le j\le 2$
\begin{equation}
	\frac{h^2}{4}\frac{1}{(\pi h)^2}B_k(x,\xi)=\sum_{ j=1}^{2} \left(ha_{kj}(x,\xi)\sharp (x_j+F_j'(\xi_j))+h^2b_{kj}(x,\xi)\right).\label{1086}
\end{equation}
For  $A_k$, we have a similar representation like (\ref{1086}).
 Lemma \ref{sharp2} then follows by substituting the representation of  $A_k,\ B_k$ into (\ref{1085}).
\end{proof}

\section{Proof of Theorem \ref{T1}}\label{s3}
This section is devoted to proving Theorem \ref{T1}. It is organized into two subsections. In the first one, we apply the contraction argument and the Strichartz estimate to  prove the  global existence and uniqueness of the solution to (\ref{NLS}). In the second one, we prove the decay estimate (\ref{decay}), combining  the semiclassical analysis method and the ODE argument.
\subsection{Proof of the global existence and uniqueness}
Using the classical energy estimate method, we can obtain the following lemma easily and omit the details.
\begin{lem}\label{zl}
	Suppose that $\text{Im} \lambda \ge0$ and $u$ is a strong $L^2$ solution of (\ref{NLS}) on the time interval $[1,T]$ with $T>1$, then we have
	\begin{equation}
		\|u(t,\cdot)\|_{L^2}\le  \varepsilon\|u_0\|_{L^2}, \ t\in[1,T].\notag
	\end{equation}
\end{lem}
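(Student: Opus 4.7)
The plan is to run the standard $L^2$ energy identity on (\ref{NLS}), exploiting the fact that the linear operator $F(D)$ is self-adjoint (because its symbol $F(\xi)$ is real-valued) and that the nonlinearity dissipates $L^2$ mass when $\text{Im}\lambda\ge 0$.

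\textbf{Step 1: rewrite as an evolution equation.} Since $D_t=\partial_t/i$, the equation is equivalent to
\begin{equation}
\partial_t u = iF(D)u + i\lambda |u|u,\notag
\end{equation}
with $u(1,\cdot)=\varepsilon u_0$.

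\textbf{Step 2: compute $\tfrac{d}{dt}\|u\|_{L^2}^2$.} Pairing the equation with $u$ in $L^2(\mathbb{R}^2)$ and taking real parts,
\begin{equation}
\tfrac{1}{2}\tfrac{d}{dt}\|u(t,\cdot)\|_{L^2}^2 = \text{Re}\bigl\langle iF(D)u,u\bigr\rangle_{L^2} + \text{Re}\bigl\langle i\lambda |u|u, u\bigr\rangle_{L^2}.\notag
\end{equation}
Because $F(\xi)\in\mathbb{R}$, $F(D)$ is self-adjoint on $L^2$ (an instance of Proposition \ref{pz} for the symbol $F(\xi)\in S_0(\langle\xi\rangle^2)$, or more directly by Plancherel), so $\langle F(D)u,u\rangle$ is real and the first term vanishes. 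For the second,
\begin{equation}
\text{Re}\bigl(i\lambda \int_{\mathbb{R}^2} |u|u\,\bar u\,dx\bigr) = \text{Re}(i\lambda)\,\|u\|_{L^3}^3 = -\text{Im}(\lambda)\,\|u\|_{L^3}^3.\notag
\end{equation}

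\textbf{Step 3: conclude.} Combining the two yields
\begin{equation}
\tfrac{1}{2}\tfrac{d}{dt}\|u(t,\cdot)\|_{L^2}^2 = -\text{Im}(\lambda)\,\|u(t,\cdot)\|_{L^3}^3 \le 0,\notag
\end{equation}
since $\text{Im}\lambda\ge 0$. Integrating from $1$ to $t$ and using $u(1,\cdot)=\varepsilon u_0$ gives $\|u(t,\cdot)\|_{L^2}\le \varepsilon\|u_0\|_{L^2}$ on $[1,T]$.

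\textbf{Main obstacle.} The only subtlety is justifying the pairing rigorously for a merely strong $L^2$ solution: the term $\langle F(D)u,u\rangle$ is well defined only after one checks that $F(D)u$ makes sense in a suitable dual sense, and the nonlinearity $|u|u\in L^1$ must be paired with $u\in L^2$, which requires $u\in L^3$ a.e.\ in time (consistent with the $L^6_tL^3_x$ regularity stated in Theorem \ref{T1}). Standard remedies are to smooth $u$ by a frequency cutoff $\chi(hD)$, perform the energy identity on the regularized equation (commuting $\chi(hD)$ with $F(D)$), and pass to the limit using the $L^6_{\text{loc}}([1,T];L^3_x)$ bound on $u$ and the continuity $u\in C([1,T];L^2)$. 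This is the classical energy argument that the authors defer as "omitted," so no new idea beyond the real-valuedness of $F$ and the sign of $\text{Im}\lambda$ is required.
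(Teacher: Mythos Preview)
Your argument is correct and is exactly the ``classical energy estimate method'' the paper invokes before omitting the details: exploit self-adjointness of $F(D)$ (real symbol) to kill the linear contribution, and use $\text{Im}\,\lambda\ge 0$ to make the nonlinear contribution nonpositive. Your remark on the regularization needed for a strong $L^2$ solution is the appropriate caveat and matches the implicit justification behind the paper's omission.
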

\begin{thm}\label{ztcz}  For any $\varepsilon \in (0,1]$ and any initial datum $u_0\in H^2,\ (x_k+F_k'(D))^2u_0\in L^2,\  k=1,2$  satisfying
	\begin{eqnarray}
		\|u_0\|_{H^2}+\sum_{k=1}^{2} \|(x_k+F_k'(D))^2u_0\|_{L^2}\le1, \label{812z1}
	\end{eqnarray}
	the Cauchy problem (\ref{NLS}) has a unique global solution $u\in C([1 ,\infty); L^2 )\cap L^4_{loc}([1 ,\infty); L^\infty  ) $. Moreover, there exist absolute    constants $T_0,\ C_1>0$ such that
	\begin{equation}
		\|u(t,\cdot)\|_{L^\infty }\le C_1\varepsilon ,\label{7121}
	\end{equation}
	for all $t\in (1,T_0)$.
\end{thm}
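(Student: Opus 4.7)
The plan is to prove Theorem \ref{ztcz} in two essentially independent components: global existence and uniqueness through a Strichartz contraction at the $L^2$ level, and a short-time $L^\infty$ bound obtained by propagating the $H^2$ regularity of the initial data.

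First I would verify the standard Strichartz estimates for the propagator $e^{itF(D)}$. Because $F(\xi)=F_1(\xi_1)+F_2(\xi_2)$ with $c_k\le F_k''\le d_k$, the kernel of $e^{itF(D)}$ factors as a product over $\xi_1,\xi_2$ and a stationary-phase argument in each variable yields the dispersive bound $\|e^{itF(D)}f\|_{L^\infty_x}\lesssim |t|^{-1}\|f\|_{L^1_x}$. The Keel--Tao framework then produces Strichartz estimates for every admissible pair $(p,q)$ with $\tfrac{2}{p}+\tfrac{2}{q}=1$.

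Next, I would set up a contraction mapping on the Duhamel formula for (\ref{NLS}) in the Strichartz space $Y_T=\{u:\|u\|_{L^\infty_T L^2}+\|u\|_{L^4_T L^4_x}\le M\}$. The nonlinear estimate combines $\||u|u\|_{L^{4/3}_{T,x}}\le \|u\|_{L^{8/3}_{T,x}}^2$ with a H\"older interpolation between $L^\infty_T L^2_x$ and $L^4_{T,x}$, giving
\begin{equation*}
\||u|u\|_{L^{4/3}_{T,x}}\lesssim (T-1)^{1/4}\|u\|_{L^\infty_T L^2}\|u\|_{L^4_{T,x}}.
\end{equation*}
For $T-1$ sufficiently small (depending only on $M$) the contraction closes, which yields a unique local solution. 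Lemma \ref{zl}, which uses $\text{Im}\,\lambda\ge 0$ to give the a priori bound $\|u(t)\|_{L^2}\le\varepsilon$, then allows iteration of this local argument on successive intervals of uniform length, producing a unique global solution in $C([1,\infty);L^2)\cap L^4_{loc}([1,\infty);L^4_x)$.

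Finally, for the pointwise bound (\ref{7121}) and the upgrade to $L^4_{loc}([1,\infty);L^\infty)$, I would propagate $H^2$ regularity. The crucial nonlinear estimate is
\begin{equation*}
\||u|u\|_{H^2}\lesssim \|u\|_{L^\infty}\|u\|_{H^2},
\end{equation*}
which, despite the non-smoothness of $u\mapsto|u|$ at the origin, follows from the pointwise inequalities $|\partial_j(|u|u)|\le 2|u||\partial_j u|$ and $|\partial_j\partial_k(|u|u)|\lesssim |u||\partial_j\partial_k u|+|\partial_j u||\partial_k u|$. An energy/Strichartz inequality at the $H^2$ level together with Gronwall and the Sobolev embedding $H^2(\mathbb{R}^2)\hookrightarrow L^\infty$ yield $\|u(t)\|_{H^2}\le 2\varepsilon$ and hence $\|u(t)\|_{L^\infty}\lesssim\varepsilon$ on $(1,T_0)$ for some absolute $T_0>1$; the same $H^2$ bound, propagated locally in time, gives $u\in L^\infty_{loc}([1,\infty);L^\infty)\subset L^4_{loc}([1,\infty);L^\infty)$. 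I expect the $H^2$ estimate to be the main obstacle, since $|u|u$ is not $C^2$ at $u=0$; the standard remedy is to replace $|u|$ by $(|u|^2+\delta)^{1/2}$, derive the estimate uniformly in $\delta$, and pass to the limit using the pointwise control above.
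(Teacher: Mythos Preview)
Your proposal is correct and follows essentially the same strategy as the paper: dispersive estimate $\Rightarrow$ Strichartz $\Rightarrow$ $L^2$-level contraction, globalisation via Lemma~\ref{zl}, then short-time $H^2$ propagation with Sobolev embedding for~(\ref{7121}). The only cosmetic differences are that the paper works with the admissible pair $(6,3)$ and its dual $(\tfrac{6}{5},\tfrac{3}{2})$ rather than your $(4,4)$, and handles the $H^2$ step by applying Strichartz directly to $\Delta u$ (using Gagliardo--Nirenberg $\|\nabla u\|_{L^3}^2\lesssim \|u\|_{L^3}\|\Delta u\|_{L^3}$) instead of the energy/Gronwall route; both choices lead to the same conclusion.
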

\begin{proof}
	Since the proof is classical, we only give a brief here.  Considering the linear inhomogeneous system,
\begin{equation}
		(D_t-F(D))u=f,\ u(1)=\varepsilon u_0,\label{212w1}
\end{equation}
	we have,
	$$
	u =e^{iF(D)t}\varepsilon u_0+i\int^t_1e^{iF(D)(t-s)}f(s)ds.
	$$
	By direct computation, we have
	\begin{equation}
		\|e^{iF(D)t}f\|_{L_x^2}=\|f\|_{L_x^2},\label{5.1}
	\end{equation}
	and
	\begin{eqnarray}
		&&|\mathcal{F}^{-1}(e^{iF(\xi)t})(x)|\notag\\
		&=&|\lim_{\epsilon\rightarrow0}\frac{1}{(2\pi)^2}\int_{\mathbb{R}^2} e^{iF(\xi)t+ix\cdot\xi}e^{-\epsilon |\xi|^2}d\xi|\notag\\
		&=&|\lim_{\epsilon\rightarrow0}\frac{1}{(2\pi)^2{t}}\int_{\mathbb{R}^2} e^{iF(\frac{\eta}{\sqrt{t}})t+i\frac{x\cdot \eta}{\sqrt{t}}-\frac{\epsilon}{t} |\eta|^2}d\eta|\notag\\
		&=&|\lim_{\epsilon\rightarrow0}\frac{1}{(2\pi)^2{t}}\int_{\mathbb{R}^2} e^{-\frac{\epsilon}{t} |\eta|^2}
		\left(\frac{1-i(\sqrt{t}F'(\frac{\eta}{\sqrt{t}})+\frac{x}{\sqrt{t}})\cdot \partial_\eta}{1+|\sqrt{t}F'(\frac{\eta}{\sqrt{t}})+\frac{x}{\sqrt{t}}|^2}
		\right)^3e^{iF(\frac{\eta}{\sqrt{t}})t+i\frac{x\cdot\eta}{\sqrt{t}}}d\eta|\notag\\
		&\lesssim &   \frac{1}{t}\int_{\mathbb{R}^2}<\sqrt{t}F'(\frac{\eta}{\sqrt{t}})+\frac{x}{\sqrt{t}}>^{-3}d\eta \lesssim \frac{1}{t},\label{810s1}
	\end{eqnarray}
	where the last inequality holds since by  (\ref{F}) and (\ref{221}):
	\begin{eqnarray}
		|\sqrt {t} F'(\frac{\eta}{\sqrt {t}})+\frac{x}{\sqrt {t}}|=\sqrt {t}|F'(\frac{\eta}{\sqrt {t}})-F'(d\phi(\frac{x}{t}))|\gtrsim |\eta-\sqrt {t}d\phi(\frac{x}{t})|.\notag
	\end{eqnarray}
	The energy estimate (\ref{5.1}) and the dispersive estimate (\ref{810s1}) imply the Strichartz estimate for the solution $u$ to (\ref{212w1}) (see the theorem of Keel and Tao \cite{Tao}):
	\begin{equation}
		\|u\|_{L^\infty_tL^2_x\cap L^6_tL^3_x}\leq C\varepsilon\|u_0\|_{L^2}
		+C\|f\|_{L^{\frac{6}{5}}_tL^{\frac{3}{2}}_x}.\label{892}
	\end{equation}
	Choosing that $f=\lambda|u|u$, and applying H\"older's inequality,  we have
	\begin{equation}
		\|u\|_{L^\infty_tL^2_x\cap L^6_tL^3_x}\leq C\varepsilon\|u_0\|_{L^2}
		+C\sqrt{t-1}\|u\|_{L^6_tL^3_x}^2.\label{894}
	\end{equation}
	Using the contraction principle in the space $L^\infty _t ([1,T');L^2_x)\cap  L^6 _t ([1,T');L^3_x)$ provided $1<T'<1+(2C)^{-4} \varepsilon ^{-2}$, we obtain that the Cauchy problem (\ref{NLS}) has a unique local solution.  	
	Since  $\|u(t,\cdot)\|_{L^2}\le \varepsilon \|u_0\|_{L^2}$ by Lemma \ref{zl}, this local solution can be extended to  $[0,\infty  ) $  with  $u\in L^\infty _t ([1,T);L^2_x)\cap  L^6 _t ([1,T);L^3_x)$ for all $T>1$. Moreover,  it is easy to check that $u\in C([0,T];L^2)$ for all $T>0$, and omit the details.
	
	It remains to  prove the inequality (\ref{7121}). Notice that
	\begin{equation}
		\partial_{x_k}^2 u =e^{iF(D)t}\varepsilon \partial_{x_k}^2 u_0+i\lambda \int^t_1e^{iF(D)(t-s)}\partial_{x_k}^2(|u|u)ds,\ k=1,2\notag
	\end{equation}
	and $|\partial_{x_k}^2(|u|u)|\lesssim |u||\Delta u|+|\nabla u|^2$ (see (\ref{891})). Using successively Strichartz estimate (\ref{892}) and  H\"older's inequality, we get
	\begin{eqnarray}
		\|\Delta u\|_{L^\infty_tL^2_x\cap L^6_tL^3_x}&\leq& C\varepsilon\|u_0\|_{H^2}
		+C\|u\Delta u\|_{L^{\frac{6}{5}}_tL^{\frac{3}{2}}_x}+C\||\nabla u|^2\|_{L^{\frac{6}{5}}_tL^{\frac{3}{2}}_x}\notag\\
		&\le &   C\varepsilon  \|u_0\|_{H^2} +C\sqrt {t-1}\|u\|_{L^{6}_tL^{3}_x}\|\Delta u\|_{_{L^{6}_tL^{3}_x}}, \notag
	\end{eqnarray}
	where we also used Gagliardo-Nirenberg's inequality  $\||\nabla u|^2\|_{L^{\frac{3}{2}}_x}=\|\nabla u\|_{L^3_x}^2\lesssim \|u\|_{L^3_x}\|\Delta u\|_{L^3_x}$.
	Choosing $T_0>1$ sufficiently approaches $1$, the above inequality implies that
	\begin{equation}
		\|\Delta u\|_{L^\infty ((1,t);L^2)\cap L^6((1,t);L^3)}\le 2C\varepsilon \|u_0\|_{H^2},\qquad \forall t\in (1,T_0).\notag
	\end{equation}
	This together with Lemma \ref{zl} and   Sobolev's embedding $H^2(\mathbb{R}^2)\hookrightarrow L^\infty (\mathbb{R}^2)$ yields the desired estimate (\ref{7121}).
\end{proof}

\subsection{Proof of the global decay estimate}
The goal of this subsection is to derive the decay estimate (\ref{decay}),  thus completing the proof of Theorem \ref{T1}.

Let  $u$ be the solution of (\ref{NLS}) given by Theorem \ref{T3}. We make first a semiclassical change of variables
\begin{equation}\label{4.2}
	u(t,x)=hv(t,hx),\ h=\frac{1}{t},
\end{equation}
that allows rewriting the equation (\ref{NLS}) as
\begin{equation}
	(D_t-G_h^w(x\cdot\xi+F(\xi)))v=\lambda h|v| v.\label{TNLS}
\end{equation}
By direct calculation, we have
\begin{equation}
	\|u(t,\cdot)\|_{L^2}=\|v(t,\cdot)\|_{L^2},\qquad \|u(t,\cdot)\|_{L^\infty }=\sqrt {h} \|v(t,\cdot)\|_{L^\infty }. \label{uv1}
\end{equation}
Moreover,  we have that (recall that   $\mathcal{L} _k$ are the operators defined in (\ref{L}))
\begin{equation}
	h(\mathcal{L} _k^2v)(t,hx)=(x_k+tF_k'(D))^2u(t,x)\qquad k=1,2;\notag
\end{equation}
so that
\begin{equation}
	\|\mathcal{L} ^2v(t,x)\|_{L_x^2}=\sum_{k=1}^{2} \|(x_k+tF_k'(D))^2u\|_{L_x^2} .\label{812z2}
\end{equation}
By (\ref{uv1}), the decay estimate (\ref{decay}) is equivalent to
\begin{equation}
	\|v(t,x)\|_{L^\infty _x}\lesssim \varepsilon ,\qquad t>1.\label{262}
\end{equation}

To prove (\ref{262}), we   decompose  $v=v_\Lambda+v_{\Lambda^c}$ with
\begin{equation}
	v_\Lambda=G^w_h(\Gamma(x,\xi)) v,\notag
\end{equation}
where $\Gamma(x,\xi)=\gamma(\frac{x+F'(\xi)}{\sqrt{h}})$, $\gamma\in C^\infty_0(\mathbb{R}^2)$ satisfying  $\gamma\equiv1$ in a neighbourhood of zero.

We will use the following Sobolev type inequality. For the convenience of the readers, we give the proof in the Appendix.
\begin{lem}\label{l2v}
	Assume $v:[1 , T]\times \mathbb{R}^2\rightarrow \mathbb{C}, T>1$, there exists a positive constant $C_2$ independent of $T$ and $ v$ such that for all $t\in [1 ,T]$
	\begin{equation}
		\|\mathcal{L} ^2(|v| v)\|_{L_x^2}\le C_2 \|v\|_{L_x^\infty } (\|v\|_{L_x^2}+\|\mathcal{L} ^2v\|_{L_x^2}),\label{lvalpha}
	\end{equation}
	and
	\begin{equation}
		\label{751}
		\|\mathcal{L} v\|_{L_x^2} \le C_2( \|v\|_{L_x^2 }+\|\mathcal{L} ^2v\|_{L_x^2}),
	\end{equation}
	provided the right-hand sides are finite.
\end{lem}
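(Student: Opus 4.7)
The plan is to prove the two estimates in sequence. For (\ref{751}), the first step is to observe that $(x_k+F_k'(\xi_k))\sharp(x_k+F_k'(\xi_k)) = (x_k+F_k'(\xi_k))^2$ exactly: in Proposition \ref{premainder} the antisymmetric first-order term vanishes because the two symbols are equal, and every surviving second derivative in the remainder formula (\ref{1}) is paired with a vanishing derivative of the other copy. Hence $\mathcal{L}_k^2=\mathcal{L}_k\circ\mathcal{L}_k$, and since the symbol $x_k+F_k'(\xi_k)$ is real, Proposition \ref{pz} gives self-adjointness of $\mathcal{L}_k$ on $L^2$. Therefore
\[
\|\mathcal{L}_k v\|_{L^2}^2 = \langle \mathcal{L}_k^2 v, v\rangle \le \|\mathcal{L}_k^2 v\|_{L^2}\|v\|_{L^2} \le \tfrac{1}{2}\bigl(\|\mathcal{L}_k^2 v\|_{L^2}^2+\|v\|_{L^2}^2\bigr),
\]
and summing over $k=1,2$ yields (\ref{751}).

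For (\ref{lvalpha}), I will first use Lemma \ref{lone} (which guarantees $e_k,\widetilde{e}_k\in S_0(1)$) and iterate Lemma \ref{l11} to produce the symbolic decompositions
\[
\mathcal{L}_k^2 = G_h^w(e_k^2)\circ (D_k^\phi)^2 + G_h^w(c_k)\circ D_k^\phi + G_h^w(d_k), \qquad D_k^\phi = G_h^w(\widetilde{e}_k)\circ \mathcal{L}_k + G_h^w(\widetilde{d}_k),
\]
where $D_k^\phi := D_k - d\phi_k(x_k)/h = h^{-1}G_h^w(\xi_k-d\phi_k(x_k))$ and all leftover symbols lie in $S_0(1)$. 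By the $L^2$-boundedness in Proposition \ref{pjl2} this reduces (\ref{lvalpha}) to proving the Leibniz-type bound $\|(D_k^\phi)^j(|v|v)\|_{L^2}\lesssim \|v\|_{L^\infty}(\|v\|_{L^2}+\|(D_k^\phi)^j v\|_{L^2})$ for $j=1,2$.

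The crucial simplification is the conjugation identity $D_k^\phi = e^{i\phi_k(x_k)/h}\, D_k\, e^{-i\phi_k(x_k)/h}$. Setting $w:=e^{-i\phi_k(x_k)/h}v$, one has $|w|=|v|$, the $L^p$ norms of $w$ and $v$ coincide, and
\[
(D_k^\phi)^j(|v|v) = e^{i\phi_k/h}\,D_k^j(|w|w), \qquad (D_k^\phi)^j v = e^{i\phi_k/h}\,D_k^j w,
\]
so the problem reduces to showing $\|D_k^j(|w|w)\|_{L^2}\lesssim \|w\|_{L^\infty}(\|w\|_{L^2}+\|D_k^j w\|_{L^2})$. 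From the pointwise chain rule $\partial_k(|w|w) = \tfrac{3}{2}|w|\partial_k w + \tfrac{w^2}{2|w|}\partial_k\bar{w}$, together with the elementary bound $|\partial_k|w||\le |\partial_k w|$, one obtains the almost-everywhere estimates $|\partial_k(|w|w)|\lesssim |w||\partial_k w|$ and $|\partial_k^2(|w|w)|\lesssim |\partial_k w|^2 + |w||\partial_k^2 w|$. The case $j=1$ is then immediate, while for $j=2$ the quadratic piece $|\partial_k w|^2$ is controlled by the two-dimensional Gagliardo--Nirenberg inequality $\|\partial_k w\|_{L^4}^2 \lesssim \|w\|_{L^\infty}\|\partial_k^2 w\|_{L^2}$ (obtained by freezing the other variable and applying the classical one-dimensional bound $\|u'\|_{L^4}^2\lesssim \|u\|_{L^\infty}\|u''\|_{L^2}$, then integrating). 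Invoking (\ref{751}) to absorb the intermediate $\|\mathcal{L}_k v\|_{L^2}$ terms into $\|\mathcal{L}_k^2 v\|_{L^2}+\|v\|_{L^2}$ completes the argument.

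The main technical obstacle will be the lack of classical smoothness of $v\mapsto|v|v$ at $v=0$, so the pointwise chain-rule manipulations must be justified distributionally. The standard workaround is to replace $|v|$ by $\sqrt{|v|^2+\varepsilon}$, observe that all the pointwise bounds hold uniformly in $\varepsilon>0$, and pass to the limit $\varepsilon\downarrow 0$ via Fatou's lemma; the rest of the proof is an assembly of the symbolic-calculus results already established in Section \ref{s2}.
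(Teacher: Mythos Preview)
Your proof is correct. The treatment of (\ref{lvalpha}) is essentially the same as the paper's: both arguments pass from $\mathcal{L}_k^2$ to $(D_k^\phi)^2$ via the symbolic decompositions in Lemmas \ref{l1}--\ref{l11}, conjugate by $e^{-i\phi_k(x_k)/h}$ to reduce to ordinary derivatives (the paper makes this explicit in (\ref{6292})--(\ref{813z1})), compute the action of two derivatives on $|v|v$ pointwise, and invoke the anisotropic Gagliardo--Nirenberg inequality $\|\partial_k w\|_{L^4}^2\lesssim\|w\|_{L^\infty}\|\partial_k^2 w\|_{L^2}$ (your freeze-and-integrate justification is exactly the paper's (\ref{6293})).

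Where you genuinely differ is in the proof of (\ref{751}). The paper derives it by the same machinery: it bounds $\|\mathcal{L}_k v\|_{L^2}$ by $\|D_k^\phi v\|_{L^2}+\|v\|_{L^2}$ via (\ref{741}), then applies Gagliardo--Nirenberg (\ref{732}) and the reverse symbolic bound (\ref{6295}) to close. Your argument bypasses all of this: you observe that the Moyal product $(x_k+F_k'(\xi_k))\sharp(x_k+F_k'(\xi_k))$ is exactly $(x_k+F_k'(\xi_k))^2$ (a fact the paper records right after Proposition \ref{premainder}), so $\mathcal{L}_k^2=\mathcal{L}_k\circ\mathcal{L}_k$ with $\mathcal{L}_k$ symmetric, and then Cauchy--Schwarz gives $\|\mathcal{L}_k v\|_{L^2}^2=\langle\mathcal{L}_k^2 v,v\rangle\le\|\mathcal{L}_k^2 v\|_{L^2}\|v\|_{L^2}$. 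This is shorter and more robust, avoiding any appeal to the structure of $F_k$ or to Gagliardo--Nirenberg; the paper's route has the minor advantage of establishing the auxiliary equivalence (\ref{6295}) between $\|(D_k^\phi)^2 v\|_{L^2}$ and $\|\mathcal{L}_k^2 v\|_{L^2}+\|v\|_{L^2}$ along the way, which is in any case needed for (\ref{lvalpha}).
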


The rest of this subsection is organized as follows. In  Lemma \ref{lvc}, we  show  that  $v_{\Lambda ^c}$  decays faster than  $v$ and   (\ref{262}) is reduced to prove
\begin{equation}
	\|v_{\Lambda }(t,x)\|_{L^\infty _x}\lesssim \varepsilon ,\qquad t>1.\notag
\end{equation}
To derive this,   we  apply  $G_h^w(\Gamma(x,\xi))$ to (\ref{TNLS}) and deduce an ODE for  $v_{\Lambda }$:
\begin{equation}
	D_tv_\Lambda=w(x)v_{\Lambda }+\lambda t^{-1}  |v_\Lambda| v_\Lambda+R(v),\label{4.34}
\end{equation}
where $w(x)=x\cdot d\phi(x)+F(d\phi(x))$ and
\begin{eqnarray}
	R(v)&=&[D_t-G^w_h(x\cdot\xi+F(\xi)),G^w_h(\Gamma)]v+G_h^w(x\cdot\xi+F(\xi)-w(x))v_{\Lambda}\notag \\
	&&- \lambda t^{-1}G^w_h(1-\Gamma)(|v| v)+\lambda t^{-1}\left( |v| v-|v_\Lambda| v_\Lambda\right).\label{1131}
\end{eqnarray}
Then we establish the  decay estimates of  $R(v)$ in Lemmas \ref{l4.5}--\ref{l4.4}.  Finally, at the end of this subsection, we  derive the desired  $L^\infty $ estimate for  $v_{\Lambda }$ and then in the solution  $u$, combining the ODE and  the bootstrap argument.  This together with Theorem \ref{T3} finishes the proof of Theorem \ref{T1}.

\begin{lem}\label{lvc}
	Suppose  $u$ is a solution of (\ref{NLS}) given by Theorem \ref{ztcz}	and $v$ is defined by (\ref{4.2}), there is a constant $C_3>0$ such that   for all $t>1$,
	\begin{equation}
		\|v_{\Lambda^c}(t,x)\|_{L_x^\infty}\le C_3 t^{-\frac{1}{2}}(\|v\|_{L_x^2}+\|\mathcal{L} ^2v\|_{L_x^2}),\label{7126}
	\end{equation}
	\begin{equation}
		\|v_{\Lambda^c}(t,x)\|_{L_x^2}\le C_3 t^{-1}(\|v\|_{L_x^2}+\|\mathcal{L} ^2v\|_{L_x^2})\label{7127}.
	\end{equation}
\end{lem}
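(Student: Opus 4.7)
The plan is to exploit the fact that $1-\gamma$ vanishes in a neighbourhood of zero in order to factor out two copies of $x_k+F_k'(\xi_k)$ from the symbol defining $v_{\Lambda^c}$, and then apply the earlier pseudodifferential machinery. Concretely, set
\begin{equation*}
\widetilde\gamma(y):=\frac{1-\gamma(y)}{|y|^{2}},
\end{equation*}
which extends smoothly across $0$ (since $1-\gamma\equiv 0$ near $0$), and satisfies the decay estimate $|\partial_{y}^{\alpha}\widetilde\gamma(y)|\le C_{\alpha}\langle y\rangle^{-2-|\alpha|}$. Hence $\widetilde\gamma$ fulfills the hypotheses of Lemma~\ref{l3}, and the symbol $\widetilde\gamma(\tfrac{x+F'(\xi)}{\sqrt h})$ lies in the class $S_{1/2}(\langle\tfrac{x+F'(\xi)}{\sqrt h}\rangle^{-2})$.

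With this notation, I would write
\begin{equation*}
(1-\gamma)\Bigl(\tfrac{x+F'(\xi)}{\sqrt h}\Bigr)=\widetilde\gamma\Bigl(\tfrac{x+F'(\xi)}{\sqrt h}\Bigr)\frac{|x+F'(\xi)|^{2}}{h}=\frac1h\sum_{k=1}^{2}\widetilde\gamma\Bigl(\tfrac{x+F'(\xi)}{\sqrt h}\Bigr)\bigl(x_{k}+F_{k}'(\xi_{k})\bigr)^{2},
\end{equation*}
so that $v_{\Lambda^{c}}=\sum_{k}G_{h}^{w}\!\bigl(h^{-1}\widetilde\gamma\cdot(x_{k}+F_{k}'(\xi_{k}))^{2}\bigr)v$. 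Next, apply Lemma~\ref{l3} to each summand; using associativity of the Moyal product together with the exact identity $(x_{k}+F_{k}'(\xi_{k}))^{2}=(x_{k}+F_{k}'(\xi_{k}))\sharp(x_{k}+F_{k}'(\xi_{k}))$ noted after Proposition~\ref{premainder}, the leading term becomes $\widetilde\gamma\sharp (x_{k}+F_{k}'(\xi_{k}))^{2}$, while the remaining terms are of the form $h\,a_{kj}\sharp(x_{j}+F_{j}'(\xi_{j}))$ or $h^{2}b_{kj}$ with $a_{kj},b_{kj}\in S_{1/2}(\langle\tfrac{x+F'(\xi)}{\sqrt h}\rangle^{-2})$. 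Taking the Weyl quantization and using $G_{h}^{w}(x_{j}+F_{j}'(\xi_{j}))=h\mathcal{L}_{j}$ and $G_{h}^{w}((x_{k}+F_{k}'(\xi_{k}))^{2})=h^{2}\mathcal{L}_{k}^{2}$, one obtains the decomposition
\begin{equation*}
v_{\Lambda^{c}}=h\sum_{k=1}^{2}G_{h}^{w}(\widetilde\gamma)\,\mathcal{L}_{k}^{2}v+h\sum_{k,j=1}^{2}G_{h}^{w}(a_{kj})\,\mathcal{L}_{j}v+h\sum_{k,j=1}^{2}G_{h}^{w}(b_{kj})\,v.
\end{equation*}

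Since every symbol appearing on the right-hand side lies in $S_{1/2}(\langle\tfrac{x+F'(\xi)}{\sqrt h}\rangle^{-2})$, Propositions~\ref{pjl2} and~\ref{pj1} give each operator the bounds $O(1)$ from $L^{2}$ to $L^{2}$ and $O(h^{-1/2})$ from $L^{2}$ to $L^{\infty}$. Combining these with the reverse inequality $\|\mathcal{L} v\|_{L^{2}}\lesssim \|v\|_{L^{2}}+\|\mathcal{L}^{2}v\|_{L^{2}}$ from Lemma~\ref{l2v}, one concludes
\begin{equation*}
\|v_{\Lambda^{c}}\|_{L^{\infty}}\lesssim h^{1/2}\bigl(\|v\|_{L^{2}}+\|\mathcal{L}^{2}v\|_{L^{2}}\bigr),\qquad \|v_{\Lambda^{c}}\|_{L^{2}}\lesssim h\bigl(\|v\|_{L^{2}}+\|\mathcal{L}^{2}v\|_{L^{2}}\bigr),
\end{equation*}
which are (\ref{7126})--(\ref{7127}) upon substituting $h=t^{-1}$.

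The main conceptual step is the factorisation $(1-\gamma)(y)=|y|^{2}\widetilde\gamma(y)$, which converts the cut-off $1-\gamma$ (not of Schwartz-class decay) into a symbol that fits Lemma~\ref{l3}. Once this is in place, the rest is routine bookkeeping: the associativity of the Moyal product collapses $(\widetilde\gamma\sharp(x_{k}+F_{k}'(\xi_{k})))\sharp(x_{k}+F_{k}'(\xi_{k}))$ into $\widetilde\gamma\sharp(x_{k}+F_{k}'(\xi_{k}))^{2}$, and the hybrid operator-bound Propositions~\ref{pjl2}--\ref{pj1} absorb exactly one factor of $h^{1/2}$ relative to the $L^{2}$ estimate. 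I do not anticipate a genuine technical obstacle; any difficulty will be purely in tracking the powers of $h$ correctly through the Weyl composition.
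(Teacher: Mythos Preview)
Your proposal is correct and follows essentially the same route as the paper's own proof: the paper also introduces $\Gamma_{-2}(\xi)=\frac{1-\gamma(\xi)}{|\xi|^{2}}$ (your $\widetilde\gamma$), invokes Lemma~\ref{l3} to produce the decomposition into $G_h^w(\Gamma_{-2})\circ\mathcal{L}_k^2$ plus lower-order terms with symbols in $S_{1/2}(\langle\tfrac{x+F'(\xi)}{\sqrt h}\rangle^{-2})$, applies Propositions~\ref{pj1}--\ref{pj2} for the $L^\infty$ and $L^2$ bounds, and finishes with the interpolation estimate (\ref{751}) from Lemma~\ref{l2v}. The only cosmetic difference is that the paper leaves the leading term as $(\Gamma_{-2}\sharp(x_k+F_k'(\xi_k)))\sharp(x_k+F_k'(\xi_k))$ and quantizes directly, whereas you first collapse it to $\widetilde\gamma\sharp(x_k+F_k'(\xi_k))^2$; either way one lands on $G_h^w(\widetilde\gamma)\circ\mathcal{L}_k^2$.
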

\begin{proof}
	Set ${\Gamma}_{-2}(\xi)=\frac{1-\gamma(\xi)}{|\xi|^2}$,  satisfying  $|\partial_\xi^\alpha\Gamma_{-2}(\xi)|\le C_\alpha  <\xi>^{-2-|\alpha|}$ for any $\alpha \in \mathbb{N}^2$.  By Lemma \ref{l3}, there are symbols  $a_{kj}(x,\xi),\ b_{kj}(x,\xi)\in S_{\frac{1}{2}}(<\frac{x+F'(\xi)}{\sqrt{h}}>^{-2})$, $1\le k,j\le2$ such that
\begin{eqnarray*}
	&& 1-\gamma(\frac{x+F'(\xi)}{\sqrt{h}} ) = \frac{1}{h} \sum_{k=1}^{2} \Gamma_{-2} (\frac{x+F'(\xi)}{\sqrt{h}}) (x_k+F_k'(\xi_k))^2 \notag\\
	&=& \frac{1}{h} \sum_{k=1}^{2} \left[\left(\Gamma_{-2}(\frac{x+F'(\xi)}{\sqrt{h}})\sharp (x_k+F_k'(\xi_k))\right)\sharp (x_k+F_k'(\xi_k))\right. \notag\\
	&&\left.+\sum_{j=1}^{2}\left(ha_{kj}\sharp (x_j+F_j'(\xi_j)) +h^2b_{kj}\right)\right].   \notag
\end{eqnarray*}
Taking the Weyl quantization, then  using Proposition  \ref{pcomposition}, one obtains
\begin{equation}
	v_{\Lambda ^c}(t,x)=h\sum_{k=1}^{2}\left[
	G_h^w(\Gamma_{-2}(\frac{x+F'(\xi)}{\sqrt{h}}))\circ \mathcal{L} _k^2v+\sum_{j=1}^{2}\left(G_h^w(a_{kj})\circ \mathcal{L} _jv+G_h^w(b_{kj})v\right)\right].\notag
\end{equation}
By Proposition \ref{pj1} and Proposition \ref{pj2}, one has
	\begin{equation}
	\|v_{\Lambda ^c}(t,x)\|_{L_x^\infty } \lesssim  h^{1/2} (\|\mathcal{L} ^2v\|_{L_x^2} +\|\mathcal{L} v\|_{L_x^2} +\|v\|_{L_x^2}).\label{217x1}
\end{equation}
On the other hand,  since  $S_{\frac{1}{2}}(<\frac{x+F'(\xi)}{\sqrt{h}}>^{-2})\subset S_{\frac{1}{2}}(1)$, it follows from Proposition \ref{pjl2} that
	\begin{equation}
		\|v_{\Lambda ^c}(t,x)\|_{L_x^2 } \lesssim  h(\|\mathcal{L} ^2v\|_{L_x^2} +\|\mathcal{L} v\|_{L_x^2} +\|v\|_{L_x^2}).\label{217x2}
	\end{equation}
Substitution of (\ref{751}) into (\ref{217x1})--(\ref{217x2}) yields the desired estimates in  Lemma \ref{lvc}.
\end{proof}

Lemmas \ref{l4.5}--\ref{l4.4} are devoted to prove the decay estimate of  $R(v)$ in (\ref{1131}).
\begin{lem}\label{l4.5}
	With the same assumptions as in Lemma \ref{lvc}, the following inequalities hold for all $t>1$
	\begin{equation}
		\|G^w_h(1-\Gamma)(|v| v)\|_{L_x^\infty}\lesssim  t^{-\frac{1}{2}}\|v\|_{L_x^\infty } (\|v\|_{L_x^2}+\|{\mathcal{L}}^2v\|_{L_x^2}),\notag
	\end{equation}
	\begin{equation}
		\|G^w_h(1-\Gamma)(|v| v)\|_{L_x^2}\lesssim  t^{-1}\|v\|_{L_x^\infty }(\|v\|_{L_x^2}+\|{\mathcal{L}}^2v\|_{L_x^2}).\notag
	\end{equation}
\end{lem}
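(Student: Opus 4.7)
\medskip

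The plan is to adapt the decomposition used in the proof of Lemma \ref{lvc} and combine it with the Leibniz-type estimate (\ref{lvalpha}) from Lemma \ref{l2v}. Observe that the operator $G_h^w(1-\Gamma)$ does not depend on the function it is applied to, so we may reuse the identity from the proof of Lemma \ref{lvc}. Namely, setting $\Gamma_{-2}(\xi)=\frac{1-\gamma(\xi)}{|\xi|^2}$, which verifies $|\partial_\xi^\alpha \Gamma_{-2}(\xi)|\lesssim \langle \xi\rangle^{-2-|\alpha|}$, Lemma \ref{l3} combined with Proposition \ref{pcomposition} yields, for any Schwartz function $f$,
\begin{equation*}
G_h^w(1-\Gamma)f = h\sum_{k=1}^{2}\Bigl[G_h^w\bigl(\Gamma_{-2}(\tfrac{x+F'(\xi)}{\sqrt{h}})\bigr)\circ \mathcal{L}_k^{2}f+\sum_{j=1}^{2}\bigl(G_h^w(a_{kj})\circ \mathcal{L}_j f+G_h^w(b_{kj})f\bigr)\Bigr],
\end{equation*}
where $a_{kj},b_{kj}\in S_{1/2}(\langle\frac{x+F'(\xi)}{\sqrt{h}}\rangle^{-2})$.

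The next step is to substitute $f=|v|v$ into this identity and control the norms of $\mathcal{L}_k^{2}(|v|v)$, $\mathcal{L}_j(|v|v)$ and $|v|v$ in $L^2$. The estimate (\ref{lvalpha}) directly gives $\|\mathcal{L}^2(|v|v)\|_{L^2}\lesssim \|v\|_{L^\infty}(\|v\|_{L^2}+\|\mathcal{L}^2 v\|_{L^2})$. For the first order term, I would apply (\ref{751}) with $|v|v$ in place of $v$ and then use (\ref{lvalpha}) again, obtaining $\|\mathcal{L}(|v|v)\|_{L^2}\lesssim \||v|v\|_{L^2}+\|\mathcal{L}^2(|v|v)\|_{L^2}\lesssim \|v\|_{L^\infty}(\|v\|_{L^2}+\|\mathcal{L}^2 v\|_{L^2})$. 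The zeroth order term is controlled trivially by H\"older.

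Finally, to turn this into the two claimed estimates, I would apply Propositions \ref{pj1} and \ref{pj2}: the symbol $\Gamma_{-2}(\tfrac{x+F'(\xi)}{\sqrt h})$ satisfies the hypothesis of Proposition \ref{pj2}, while $a_{kj},b_{kj}$ fall into the class treated by Proposition \ref{pj1}. Both give operator bounds $O(h^{-1/2})$ from $L^2$ to $L^\infty$ and $O(1)$ from $L^2$ to $L^2$. Combining the resulting gain $h^{1/2}=t^{-1/2}$ (resp.\ $h=t^{-1}$) with the overall prefactor $h$ from the decomposition produces the estimates
\begin{equation*}
\|G_h^w(1-\Gamma)(|v|v)\|_{L_x^\infty}\lesssim h^{1/2}\|v\|_{L_x^\infty}(\|v\|_{L_x^2}+\|\mathcal{L}^2 v\|_{L_x^2}),
\end{equation*}
\begin{equation*}
\|G_h^w(1-\Gamma)(|v|v)\|_{L_x^2}\lesssim h\,\|v\|_{L_x^\infty}(\|v\|_{L_x^2}+\|\mathcal{L}^2 v\|_{L_x^2}),
\end{equation*}
which is exactly what we want. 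The only mildly delicate point is verifying that the first order term $\mathcal{L}(|v|v)$ can be absorbed by $\mathcal{L}^2(|v|v)$ and $|v|v$ at the right rate, but this is handled by the auxiliary inequality (\ref{751}) exactly as above, so no new Leibniz rule is needed beyond Lemma \ref{l2v}.
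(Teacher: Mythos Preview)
Your argument is correct and follows essentially the same approach as the paper: apply the decomposition from Lemma \ref{lvc} to $|v|v$ in place of $v$, use Propositions \ref{pj1}--\ref{pj2} for the operator bounds, and then invoke (\ref{lvalpha}) and (\ref{751}) to control $\|\mathcal{L}^2(|v|v)\|_{L^2}$, $\|\mathcal{L}(|v|v)\|_{L^2}$, and $\||v|v\|_{L^2}$. The paper states this more tersely (``the same method as that used to derive (\ref{7126})--(\ref{7127})''), but the content is identical.
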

\begin{proof}
	Using the same method as that used to derive (\ref{7126})--(\ref{7127}), one gets
	\begin{eqnarray}
		\|G_h^w(1-\Gamma)(|v| v)\|_{L_x^\infty } \lesssim t^{-1/2}(\||v| v\|_{L_x^2}+\|\mathcal{L} ^2(|v| v)\|_{L_x^2}),\notag
	\end{eqnarray}
	\begin{eqnarray}
		\|G_h^w(1-\Gamma)(|v| v)\|_{L_x^2 }\lesssim  t^{-1}(\||v| v\|_{L_x^2}+\|\mathcal{L} ^2(|v| v)\|_{L_x^2}),\notag
	\end{eqnarray}
	which together with the inequality  (\ref{lvalpha}) yields the desired estimates in Lemma \ref{l4.5}.
\end{proof}
Similarly, we can obtain the following lemma easily and omit the details.
\begin{lem}\label{l5.6}
	With the same assumptions as in Lemma \ref{lvc}, the following inequalities hold for all $t>1$
	\begin{equation}
		\||v| v-|v_\Lambda| v_\Lambda\|_{L_x^\infty}\lesssim t^{-1/2}(\|v\|_{L_x^\infty }+\|v_{\Lambda }\|_{L_x^\infty }  )(\|v\|_{L_x^2}+\|{\mathcal{L}}^2v\|_{L_x^2}),\notag
	\end{equation}
	\begin{equation}
		\||v| v-|v_\Lambda| v_\Lambda\|_{L_x^2}\lesssim  t^{-1}(\|v\|_{L_x^\infty }+\|v_{\Lambda }\|_{L_x^\infty }  )(\|v\|_{L_x^2}+\|{\mathcal{L}}^2v\|_{L_x^2}).\notag
	\end{equation}
\end{lem}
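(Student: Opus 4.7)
The plan is to reduce the difference $|v|v-|v_\Lambda|v_\Lambda$ to a pointwise bound involving $|v-v_\Lambda|=|v_{\Lambda^c}|$, and then invoke the decay estimates on $v_{\Lambda^c}$ established in Lemma \ref{lvc}. The starting ingredient is the elementary Lipschitz-type inequality
\begin{equation}
\bigl||z|z-|w|w\bigr|\lesssim (|z|+|w|)|z-w|,\qquad \forall\,z,w\in\mathbb{C},\notag
\end{equation}
which one verifies by writing $|z|z-|w|w=|z|(z-w)+(|z|-|w|)w$ and using $||z|-|w||\le |z-w|$. Specializing to $z=v$ and $w=v_\Lambda$ (so that $z-w=v_{\Lambda^c}$) gives the pointwise bound
\begin{equation}
\bigl||v|v-|v_\Lambda|v_\Lambda\bigr|\lesssim \bigl(|v|+|v_\Lambda|\bigr)|v_{\Lambda^c}|.\notag
\end{equation}

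From here, the two required estimates are essentially automatic. For the $L^\infty_x$ bound I would take the $L^\infty_x$ norm of both sides, obtaining
\begin{equation}
\bigl\||v|v-|v_\Lambda|v_\Lambda\bigr\|_{L^\infty_x}\lesssim \bigl(\|v\|_{L^\infty_x}+\|v_\Lambda\|_{L^\infty_x}\bigr)\|v_{\Lambda^c}\|_{L^\infty_x},\notag
\end{equation}
and then plug in the first estimate of Lemma \ref{lvc}, namely $\|v_{\Lambda^c}\|_{L^\infty_x}\lesssim t^{-1/2}(\|v\|_{L^2_x}+\|\mathcal{L}^2 v\|_{L^2_x})$, to recover the claimed bound. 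For the $L^2_x$ bound I would instead split the product by H\"older, taking the $L^\infty_x$ norm of $|v|+|v_\Lambda|$ and the $L^2_x$ norm of $v_{\Lambda^c}$:
\begin{equation}
\bigl\||v|v-|v_\Lambda|v_\Lambda\bigr\|_{L^2_x}\lesssim \bigl(\|v\|_{L^\infty_x}+\|v_\Lambda\|_{L^\infty_x}\bigr)\|v_{\Lambda^c}\|_{L^2_x},\notag
\end{equation}
and then apply the second estimate of Lemma \ref{lvc}, $\|v_{\Lambda^c}\|_{L^2_x}\lesssim t^{-1}(\|v\|_{L^2_x}+\|\mathcal{L}^2 v\|_{L^2_x})$, which yields the desired inequality.

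There is essentially no obstacle here: the whole content of the lemma is packaged into Lemma \ref{lvc}, and all that remains is the elementary pointwise Lipschitz estimate for the map $z\mapsto|z|z$. The proof is therefore short enough that the authors' decision to omit the details is natural; the only minor point to keep in mind is that $z\mapsto |z|z$ is not $C^1$ at the origin, so one should justify the Lipschitz bound by the direct splitting $|z|z-|w|w=|z|(z-w)+(|z|-|w|)w$ rather than by a naive mean-value argument.
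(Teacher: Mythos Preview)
Your proposal is correct and is exactly the argument the paper has in mind: the authors omit the details precisely because the lemma follows immediately from the pointwise Lipschitz bound $||z|z-|w|w|\lesssim(|z|+|w|)|z-w|$ together with the $L^\infty$ and $L^2$ estimates on $v_{\Lambda^c}$ from Lemma~\ref{lvc}. There is nothing to add.
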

\begin{lem}\label{l4.3}
	With the same assumptions as in Lemma \ref{lvc}, the following inequalities hold for all $t>1$
	\begin{eqnarray}
		\|[D_t-G^w_h(x\cdot\xi+F(\xi)),G^w_h(\Gamma)]v\|_{L_x^\infty } \lesssim t^{-3/2} (\|v\|_{L_x^2} +\|\mathcal{L} ^2v\|_{L_x^2}),\notag
	\end{eqnarray}
	and
	\begin{eqnarray}
		\|[D_t-G^w_h(x\cdot \xi+F(\xi)),G^w_h(\Gamma)]v\|_{L_x^2 } \lesssim t^{-2} (\|v\|_{L_x^2} +\|\mathcal{L} ^2v\|_{L_x^2}).\notag
	\end{eqnarray}
\end{lem}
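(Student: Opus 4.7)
The plan is to write the commutator as $G_h^w(\sigma)v$ for an explicit symbol $\sigma$, show that after cancellations $\sigma$ consists of a main piece of the form $h\,\widetilde\Gamma\bigl(\tfrac{x+F'(\xi)}{\sqrt h}\bigr)$ with $\widetilde\Gamma\in C_0^\infty(\mathbb R^2)$ vanishing near the origin, plus a remainder of Lemma \ref{sharp1}-type, and then invoke Lemma \ref{l4} together with Propositions \ref{pjl2} and \ref{pj1} to obtain the claimed $L^\infty$ and $L^2$ bounds.

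First I would split the commutator into
\[[D_t-G_h^w(x\cdot\xi+F(\xi)), G_h^w(\Gamma)] = [D_t, G_h^w(\Gamma)] - [G_h^w(x\cdot\xi+F(\xi)), G_h^w(\Gamma)].\]
For the first bracket, the operator $G_h^w(\Gamma)$ depends on $t$ through both the quantization (via $h=1/t$) and the symbol $\Gamma=\gamma(\tfrac{x+F'(\xi)}{\sqrt h})$. A direct chain-rule computation on the integral representation of $G_h^w$, after the substitution $\xi=h\eta$ and using $h'(t)=-h^2$, yields the identity $[D_t,G_h^w(\Gamma)]v=G_h^w\bigl(ih\,\xi\cdot\partial_\xi\Gamma+ih^2\,\partial_h\Gamma\bigr)v$. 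Computing $\partial_\xi\Gamma$ and $\partial_h\Gamma$ with $\psi(y):=y\cdot\nabla\gamma(y)$, this becomes $G_h^w\bigl(i\sqrt h\sum_j\xi_jF_j''(\xi_j)\partial_j\gamma(\tfrac{x+F'(\xi)}{\sqrt h})-\tfrac{ih}{2}\psi(\tfrac{x+F'(\xi)}{\sqrt h})\bigr)v$.

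Next, for the second bracket I would apply Proposition \ref{premainder} to expand $H\sharp\Gamma-\Gamma\sharp H$ with $H:=x\cdot\xi+F(\xi)$. The main Poisson-bracket term evaluates to
\[ih\{H,\Gamma\}=i\sqrt h\sum_j\bigl[\xi_jF_j''(\xi_j)-(x_j+F_j'(\xi_j))\bigr](\partial_j\gamma)(\tfrac{x+F'(\xi)}{\sqrt h}),\]
and the symmetric-product remainder $r^{H\sharp\Gamma}-r^{\Gamma\sharp H}$ is rewritten by Lemma \ref{sharp1} as $\sum_k\bigl(ha_k\sharp(x_k+F_k'(\xi_k))+h^2b_k\bigr)$ with $a_k,b_k\in S_{1/2}(<(x+F'(\xi))/\sqrt h>^{-2})$. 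The crucial cancellation is that the $i\sqrt h\sum_j\xi_jF_j''(\xi_j)\partial_j\gamma$ contribution from $[D_t,G_h^w(\Gamma)]$ precisely matches the same piece of the Poisson bracket and cancels upon subtraction. What remains is $i\sqrt h\sum_j(x_j+F_j'(\xi_j))\partial_j\gamma-\tfrac{ih}{2}\psi$, which simplifies, using $(x_j+F_j'(\xi_j))\partial_j\gamma(\tfrac{x+F'(\xi)}{\sqrt h})=\sqrt h\,Y_j\partial_j\gamma(Y)$ with $Y=\tfrac{x+F'(\xi)}{\sqrt h}$ and $\sum_jY_j\partial_j\gamma(Y)=\psi(Y)$, to the single term $\tfrac{ih}{2}\psi(\tfrac{x+F'(\xi)}{\sqrt h})$. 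Thus the main symbol has the form $h\widetilde\Gamma(\tfrac{x+F'(\xi)}{\sqrt h})\cdot 1$ with $\widetilde\Gamma=i\psi/2\in C_0^\infty(\mathbb R^2)$ vanishing near the origin.

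Finally, I would apply Lemma \ref{l4} to the main symbol to obtain $L^\infty$ and $L^2$ bounds of $h\cdot h^{1/2}$ and $h\cdot h$ respectively, times $\|\mathcal L^2 v\|_{L^2}+\|\mathcal L v\|_{L^2}+\|v\|_{L^2}$. For the Lemma \ref{sharp1} remainder, I would use Proposition \ref{pcomposition} to identify $G_h^w(ha_k\sharp(x_k+F_k'(\xi_k)))=h^2G_h^w(a_k)\mathcal L_k$ and $G_h^w(h^2b_k)=h^2G_h^w(b_k)$, then invoke Propositions \ref{pjl2} and \ref{pj1} to bound these by $h^{3/2}(\|\mathcal L_kv\|_{L^2}+\|v\|_{L^2})$ in $L^\infty$ and $h^2(\|\mathcal L_kv\|_{L^2}+\|v\|_{L^2})$ in $L^2$. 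Summing, absorbing $\|\mathcal L v\|_{L^2}$ via \eqref{751}, and recalling $h=1/t$ give the stated bounds. The main obstacle will be the precise sign bookkeeping: the $\sqrt h$-order terms containing $\xi_jF_j''(\xi_j)$ are individually uncontrollable because $\xi_jF_j''(\xi_j)$ is unbounded in $\xi$, and only their exact cancellation upgrades the effective symbol from $O(\sqrt h)$ to $O(h)$, which is just the right size for Lemma \ref{l4} to yield the $t^{-3/2}$ and $t^{-2}$ decay.
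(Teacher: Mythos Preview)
Your proposal is correct and follows essentially the same route as the paper: both split the commutator, identify the crucial cancellation of the dangerous $i\sqrt h\sum_k\xi_kF_k''(\xi_k)\gamma_k$ terms between the $D_t$ and $G_h^w(x\cdot\xi+F(\xi))$ pieces, reduce the surviving symbol to $h\,\widetilde\Gamma(\tfrac{x+F'(\xi)}{\sqrt h})$ with $\widetilde\Gamma\in C_0^\infty$ vanishing near $0$, handle the Moyal remainder via Lemma~\ref{sharp1}, and close with Lemma~\ref{l4}, Propositions~\ref{pjl2}--\ref{pj1}, and \eqref{751}. The only minor difference is that you compute $[D_t,G_h^w(\Gamma)]$ by first substituting $\xi=h\eta$ to obtain the clean symbolic identity $G_h^w(ih\,\xi\cdot\partial_\xi\Gamma+ih^2\partial_h\Gamma)$, whereas the paper differentiates the kernel directly and then integrates by parts in $\xi$ (equations (3.8)--(3.10)); both routes land on the same formula $i\sqrt h\sum_k\gamma_k(Y)\bigl(\xi_kF_k''(\xi_k)-\tfrac12(x_k+F_k'(\xi_k))\bigr)$, and your $\widetilde\Gamma=\tfrac{i}{2}\psi$ is exactly the paper's $\sum_k\Gamma^k$ with $\Gamma^k(\xi)=\tfrac{i}{2}\gamma_k(\xi)\xi_k$.
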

\begin{proof}
	First we start by calculating  $[D_t,G_h^w(\Gamma)]=D_tG_h^w(\Gamma)-G_h^w(\Gamma)D_t$.
	Since $h=t^{-1}$, by direct computation, we have, under the notation $\gamma_k(\xi)=\partial_{\xi_k}\gamma(\xi)$,
	\begin{eqnarray}
		&&D_tG^w_h(\Gamma)v\notag\\
		&=& \frac{1}{i}\partial_t  \left[\frac{1}{(2\pi h)^2}\int_{\mathbb{R}^2}\int_{\mathbb{R}^2} e^{\frac{i}{h}(x-y)\cdot \xi} \gamma(\frac{\frac{x+y}{2}+F'(\xi)}{\sqrt {h}})v(t,y)dyd\xi\right]\notag\\
		&=&G_h^w(\Gamma)D_tv-\frac{it}{2\pi^2}\int_{\mathbb{R}^2}\int_{\mathbb{R}^2} e^{\frac{i}{h}(x-y)\cdot \xi} \gamma(\frac{\frac{x+y}{2}+F'(\xi)}{\sqrt {h}})v(t,y)dyd\xi\notag\\
		&&+\frac{1}{(2\pi h)^2}\int_{\mathbb{R}^2}\int_{\mathbb{R}^2} e^{\frac{i}{h}(x-y)\cdot \xi} (x-y)\cdot\xi\gamma(\frac{\frac{x+y}{2}+F'(\xi)}{\sqrt {h}})v(t,y)dyd\xi\notag\\
		&&+\frac{1}{(2\pi h)^2} \sum_{k=1}^{2}\int_{\mathbb{R}^2}\int_{\mathbb{R}^2} e^{\frac{i}{h}(x-y)\cdot \xi}\gamma_k(\frac{\frac{x+y}{2}+F'(\xi)}{\sqrt{h}})
		(\frac{x_k+y_k}{2}+F_k'(\xi_k))\frac{\sqrt{h}}{2i}v(t,y)dyd\xi\notag\\
		&=&-2hiG_h^w(\Gamma)v+\frac{1}{(2\pi h)^2}\int_{\mathbb{R}^2}\int_{\mathbb{R}^2} e^{\frac{i}{h}(x-y)\cdot \xi} (x-y)\cdot\xi\gamma(\frac{\frac{x+y}{2}+F'(\xi)}{\sqrt {h}})v(t,y)dyd\xi\notag\\
		&& -\frac{i\sqrt {h}}{2}\sum_{k=1}^{2} G_h^w\left(\gamma_k(\frac{x+F'(\xi)}{\sqrt{h}})(x_k+F_k'(\xi_k))\right)v+G_h^w(\Gamma)D_tv.\label{812s1}
	\end{eqnarray}
Moreover, using the identity  $(x-y)\cdot \xi e^{\frac{i}{h}(x-y)\cdot\xi}=\frac{h}{i}\sum_{k=1}^{2}\partial_{\xi_k}e^{\frac{i}{h}(x-y)\cdot\xi}\xi_k$ to integrate by parts, we get
	\begin{eqnarray}
		&&\frac{1}{(2\pi h)^2}\int_{\mathbb{R}^2}\int_{\mathbb{R}^2} e^{\frac{i}{h}(x-y)\cdot \xi} (x-y)\cdot\xi\gamma(\frac{\frac{x+y}{2}+F'(\xi)}{\sqrt {h}})v(t,y)dyd\xi \notag\\
		&=&\frac{2hi}{(2\pi h)^2} \int_{ \mathbb{R}^2}\int _{\mathbb{R}^2}e^{\frac{i}{h}(x-y)\cdot\xi}\gamma (\frac{\frac{x+y}{2}+F'(\xi)}{\sqrt{h}})v(t,y)dyd\xi \notag\\
		&&+\frac{1}{(2\pi h)^2} \sum_{k=1}^{2} \int_{\mathbb{R}^2} \int_{\mathbb{R}^2} e^{\frac{i}{h}(x-y)\cdot \xi} \gamma_k(\frac{\frac{x+y}{2}+F'(\xi)}{\sqrt{h}})\xi_k F_k''(\xi_k) i\sqrt {h} v(t,y)dyd\xi \notag\\
		&=& 2hiG_h^w(\gamma(\frac{x+F'(\xi)}{\sqrt{h}}))v+i\sqrt {h}\sum_{k=1}^{2}G_h^w(\gamma_k(\frac{x+F'(\xi)}{\sqrt{h}})\xi_k F_k''(\xi_k))v.\label{26w3}
	\end{eqnarray}
Substitution of  (\ref{26w3}) into (\ref{812s1}) yields
\begin{equation}
	[D_t,G_h^w(\Gamma)]v=i\sqrt {h} \sum_{k=1}^{2} G_h^w\left(\gamma_k(\frac{x+F'(\xi)}{\sqrt{h}})(\xi_k F_k''(\xi_k)-\frac{x_k+F_k'(\xi_k)}{2})\right)v.\label{26w4}
\end{equation}
	On the other hand, an application of  Proposition \ref{premainder} gives
	\begin{equation}
		[G^w_h(x\cdot \xi+F(\xi)),G^w_h(\Gamma)]v=i\sqrt {h} \sum_{k=1}^{2}G^w_h\left(\gamma_k(\frac{x+F'(\xi)}{\sqrt{h}})(\xi_kF_k''(\xi_k)-(x_k+F_k'(\xi_k)))\right)v
		+G_h^w(r)v,\label{6297}
	\end{equation}
	where   $r=:r^{(x\xi+F(\xi))\sharp \gamma(\frac{x+F'(\xi)}{\sqrt{h}})}-r^{\gamma(\frac{x+F'(\xi)}{\sqrt{h}})\sharp (x\xi+F(\xi))}$. Combining (\ref{26w4}) and (\ref{6297}), then using  Lemma \ref{sharp1}, we have for some symbols  $a_k(x,\xi),\ b_k(x,\xi)\in S_{\frac{1}{2}}(<\frac{x+F'(\xi)}{\sqrt{h}}>^{-2}),\ 1\le k\le2$
	\begin{eqnarray}
		&&[D_t-G_h^w(x\cdot\xi+F(\xi)),G_h^w(\Gamma)]v
		= \frac{i\sqrt {h}}{2}\sum_{k=1}^{2} G_h^w(\gamma_k(\frac{x+F'(\xi)}{\sqrt{h}})(x_k+F_k'(\xi_k)))v-G_h^w(r) v\notag\\
		&=& h\sum_{k=1}^{2}G_h^w(\Gamma^k(\frac{x+F'(\xi)}{\sqrt{h}}))v+h^2\sum_{k=1}^{2}\left(G_h^w(a_k)\circ \mathcal{L} _kv+G_h^w(b_k)v\right),\notag
	\end{eqnarray}
 where  $\Gamma^k (\xi)=:\frac{i\gamma_k(\xi)\xi_k}{2}\in C_0^\infty (\mathbb{R}^2)$, satisfying $\Gamma^k\equiv0$ in a neighborhood of zero.
	By Propositions \ref{pjl2}--\ref{pj2} and Lemma \ref{l4},  we find the estimates
	\begin{equation}
		\|[D_t-G_h^w(x\cdot\xi+F(\xi)),G_h^w(\Gamma)]v \|_{L_x^\infty }\lesssim h^{3/2}(\|\mathcal{L} ^2v\|_{L_x^2}+\|\mathcal{L} v\|_{L_x^2}+\|v\|_{L_x^2}),\notag
	\end{equation}
	\begin{equation}
	\|[D_t-G_h^w(x\cdot\xi+F(\xi)),G_h^w(\Gamma)]v \|_{L_x^2 }\lesssim h^{2}(\|\mathcal{L} ^2v\|_{L_x^2}+\|\mathcal{L} v\|_{L_x^2}+\|v\|_{L_x^2}),\notag
\end{equation}
which together with  (\ref{751}) yields  the desired estimates in Lemma \ref{l4.3}.
\end{proof}
\begin{lem}\label{l4.4}
	With the same assumptions as in Lemma \ref{lvc}, the following inequalities hold for all $t>1$
	\begin{eqnarray}
		\|G_h^w(x\cdot\xi+F(\xi)-w(x))v_{\Lambda }\|_{L_x^\infty }\lesssim t^{-3/2} (\|v\|_{L_x^2} +\|\mathcal{L} ^2v\|_{L_x^2}),\label{f1}
	\end{eqnarray}	
	\begin{eqnarray}
		\|G_h^w(x\cdot \xi+F(\xi)-w(x))v_{\Lambda }\|_{L_x^2 }\lesssim t^{-2} (\|v\|_{L_x^2} +\|\mathcal{L} ^2v\|_{L_x^2}).\label{f2}
	\end{eqnarray}
\end{lem}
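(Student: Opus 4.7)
The plan is to decompose the symbol $P(x,\xi)=x\cdot\xi+F(\xi)-w(x)$ by the Taylor expansion at $\xi=d\phi(x)$, express the operator $G_h^w(P)v_\Lambda$ as a combination of $h^2 G_h^w(a)\mathcal{L}_k^j$ terms with $a\in S_0(1)$, and then exploit a key symbol identity that splits one copy of $\mathcal{L}_k$ past the localizer $G_h^w(\Gamma)$ so that estimate (\ref{751}) together with Proposition \ref{pjl2} (respectively Proposition \ref{pj1}) yields (\ref{f2}) (respectively (\ref{f1})). By (\ref{15100}), the relation $(\xi_k-d\phi_k(x_k))^2=\widetilde{e}_k(x,\xi)^2(x_k+F_k'(\xi_k))^2$ coming from (\ref{7131}), and Lemma \ref{lone},
\begin{equation*}
P(x,\xi)=\sum_{k=1}^{2}B_k(x,\xi)(x_k+F_k'(\xi_k))^2,\qquad B_k\in S_0(1).
\end{equation*}
Applying Lemma \ref{l1} identity (\ref{l1.3}) and taking the Weyl quantization produces symbols $c_k,d_k\in S_0(1)$ such that
\begin{equation*}
G_h^w(P)\,v_\Lambda=h^2\sum_{k=1}^{2}\left[G_h^w(B_k)\mathcal{L}_k^2\,v_\Lambda+G_h^w(c_k)\mathcal{L}_k\,v_\Lambda+G_h^w(d_k)\,v_\Lambda\right].
\end{equation*}

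The crucial observation is that the principal Poisson bracket $\{x_k+F_k'(\xi_k),\Gamma\}=F_k''(\xi_k)\partial_{x_k}\Gamma-\partial_{\xi_k}\Gamma$ vanishes identically, as one checks directly from $\partial_{\xi_k}\Gamma=h^{-1/2}F_k''(\xi_k)(\partial_{\eta_k}\gamma)(\frac{x+F'(\xi)}{\sqrt{h}})$ and $\partial_{x_k}\Gamma=h^{-1/2}(\partial_{\eta_k}\gamma)(\frac{x+F'(\xi)}{\sqrt{h}})$. Combined with associativity of the Moyal product and a careful analysis of the remainders in Proposition \ref{premainder} (mimicking the calculations in the proofs of Lemmas \ref{sharp1}--\ref{sharp2}), this yields
\begin{equation*}
(x_k+F_k'(\xi_k))^2\sharp\Gamma=(x_k+F_k'(\xi_k))\sharp\Gamma\sharp(x_k+F_k'(\xi_k))\quad\text{mod }h^2 S_{\frac{1}{2}}\bigl(<\tfrac{x+F'(\xi)}{\sqrt{h}}>^{-2}\bigr),
\end{equation*}
and therefore $\mathcal{L}_k^2 v_\Lambda=\mathcal{L}_k G_h^w(\Gamma)\mathcal{L}_k v$ modulo a remainder of the form $G_h^w(\rho_k)v$ with $\rho_k\in S_{\frac{1}{2}}(<\tfrac{x+F'(\xi)}{\sqrt{h}}>^{-2})$. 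Now commuting, $\mathcal{L}_k G_h^w(\Gamma)\mathcal{L}_k v=G_h^w(\Gamma)\mathcal{L}_k^2 v+[\mathcal{L}_k,G_h^w(\Gamma)]\mathcal{L}_k v$; since $(x_k+F_k'(\xi_k))\sharp\Gamma-\Gamma\sharp(x_k+F_k'(\xi_k))\in h S_{\frac{1}{2}}(<\tfrac{x+F'(\xi)}{\sqrt{h}}>^{-2})$ (again by the vanishing of the Poisson bracket), the commutator $[\mathcal{L}_k,G_h^w(\Gamma)]$ is $L^2$-bounded, and together with Proposition \ref{pjl2} and inequality (\ref{751}) we obtain $\|\mathcal{L}_k^j v_\Lambda\|_{L^2}\lesssim\|v\|_{L^2}+\|\mathcal{L}^2 v\|_{L^2}$ for $j=0,1,2$, from which (\ref{f2}) follows.

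For the $L^\infty$ estimate (\ref{f1}), each term is further rearranged so that the rightmost operator acting on $v$ passes through a symbol in the localized class $S_{\frac{1}{2}}(<\tfrac{x+F'(\xi)}{\sqrt{h}}>^{-2})$, enabling Proposition \ref{pj1}. For example, the principal contribution becomes
\begin{equation*}
h^2\,G_h^w(B_k)G_h^w(\Gamma)\mathcal{L}_k^2 v=h^2\,G_h^w(B_k\sharp\Gamma)\,\mathcal{L}_k^2 v,
\end{equation*}
where $B_k\sharp\Gamma\in S_{\frac{1}{2}}(<\tfrac{x+F'(\xi)}{\sqrt{h}}>^{-2})$ thanks to the compact support of $\gamma$ and the fact that $B_k\in S_0(1)$. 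Proposition \ref{pj1} then gives the $L^2\to L^\infty$ bound of order $h^{-1/2}$, producing a contribution of size $h^{3/2}\|\mathcal{L}^2 v\|_{L^2}=t^{-3/2}\|\mathcal{L}^2 v\|_{L^2}$. The commutator contribution and the lower-order terms ($G_h^w(c_k)\mathcal{L}_k v_\Lambda$, $G_h^w(d_k)v_\Lambda$, and the remainders $G_h^w(\rho_k)v$) are handled by the same device, giving (\ref{f1}).

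The main technical obstacle is the bookkeeping in the second paragraph, namely the verification that the iterated Moyal remainder $(x_k+F_k'(\xi_k))^2\sharp\Gamma-(x_k+F_k'(\xi_k))\sharp\Gamma\sharp(x_k+F_k'(\xi_k))$ and the commutator symbol $(x_k+F_k'(\xi_k))\sharp\Gamma-\Gamma\sharp(x_k+F_k'(\xi_k))$ truly belong to $S_{\frac{1}{2}}(<\tfrac{x+F'(\xi)}{\sqrt{h}}>^{-2})$ with the respective powers $h^2$ and $h$. This mirrors the arguments of Lemmas \ref{sharp1}--\ref{sharp2}, adapted to the symbols $(x_k+F_k'(\xi_k))^j$ in place of $w(x)$ and $x\cdot\xi+F(\xi)$, and crucially exploits the vanishing of the principal Poisson bracket $\{x_k+F_k'(\xi_k),\Gamma\}=0$.
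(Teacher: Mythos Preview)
Your approach is genuinely different from the paper's and rests on a nice observation---the vanishing of $\{x_k+F_k'(\xi_k),\Gamma\}$---but the remainder bookkeeping as stated has a gap. You assert that the single commutator symbol $(x_k+F_k')\sharp\Gamma-\Gamma\sharp(x_k+F_k')$ lies in $hS_{\frac12}(\langle\tfrac{x+F'(\xi)}{\sqrt h}\rangle^{-2})$ and that the iterated difference $(x_k+F_k')^2\sharp\Gamma-(x_k+F_k')\sharp\Gamma\sharp(x_k+F_k')$ lies in $h^2S_{\frac12}(\langle\cdot\rangle^{-2})$. But these two claims are not mutually consistent under naive composition: writing the iterated difference as $(x_k+F_k')\sharp[(x_k+F_k')\sharp\Gamma-\Gamma\sharp(x_k+F_k')]$ and using only the $O(h)$ bound on the bracket, the leading term $(x_k+F_k')\cdot(hc)$ with $c\in S_{\frac12}(\langle\cdot\rangle^{-N})$ is only $O(h^{3/2})$, since $|x_k+F_k'|\lesssim\sqrt h\,\langle\tfrac{x+F'(\xi)}{\sqrt h}\rangle$ on the support of $c$. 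This $h^{-1/2}$ loss propagates and destroys both (\ref{f1}) and (\ref{f2}).

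The fix is real but you have not stated it: because the Weyl commutator $a\sharp b-b\sharp a$ retains only \emph{odd} terms of the Moyal expansion, and the $n=1$ term vanishes by your Poisson-bracket identity, the single commutator is actually in $h^{3/2}S_{\frac12}(\langle\cdot\rangle^{-N})$ (the $n=3$ term costs $h^3$ and three derivatives of $\Gamma$ cost $h^{-3/2}$). With this sharper input, the iterated remainder does land in $h^2S_{\frac12}(\langle\cdot\rangle^{-N})$ and your $\rho_k$ is bounded as claimed. So your strategy works, but the crucial step is the $h^{3/2}$ (not $h$) bound on the single commutator, which must be proved by expanding one order further than Proposition~\ref{premainder} provides.

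By contrast, the paper commutes the \emph{full} symbol $P=x\cdot\xi+F(\xi)-w(x)$ past $\Gamma$ before decomposing $P$ via Taylor. Since $P$ vanishes to second order on $\{\xi=d\phi(x)\}$, the Poisson bracket $\{P,\Gamma\}$ carries an explicit factor $(x_k+F_k')$, which is absorbed by Lemma~\ref{l4}; the higher remainders are handled by Lemmas~\ref{sharp1}--\ref{sharp2}. This keeps $G_h^w(\Gamma)$ on the outside from the start, so Proposition~\ref{pj2} applies directly for the $L^\infty$ bound without the extra commutation you perform. Your route trades that step for a commutator analysis that ultimately requires the same type of delicate remainder estimates, plus the odd-term cancellation you omitted.
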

\begin{proof}
	From Proposition \ref{premainder} and (\ref{w1})--(\ref{w2}), we have
	\begin{eqnarray}
		&&(x\cdot \xi+F(\xi)-w(x))\sharp \gamma(\frac{x+F'(\xi)}{\sqrt{h}})\notag\\
		&=& \gamma(\frac{x+F'(\xi)}{\sqrt{h}})\sharp  (x\cdot \xi+F(\xi)-w(x))\notag\\
&&+i\sqrt {h}\sum_{k=1}^{2} \gamma_k(\frac{x+F'(\xi)}{\sqrt{h}})\left[F_k''(\xi_k)(\xi_k-d\phi_k(x_k))-(x_k+F_k'(\xi_k))\right]\notag\\
		&&+(r^{(x\cdot\xi+F(\xi))\sharp \gamma(\frac{x+F'(\xi)}{\sqrt{h}})}-r^{\gamma(\frac{x+F'(\xi)}{\sqrt{h}})\sharp(x\cdot\xi+F(\xi))})-(r^{w(x)\sharp \gamma(\frac{x+F'(\xi)}{\sqrt{h}})}-r^{\gamma(\frac{x+F'(\xi)}{\sqrt{h}})\sharp w(x)}),\notag
	\end{eqnarray}
	where $\gamma_k(\xi)$ denotes $\partial_{\xi_k}\gamma(\xi)$. By  Lemmas \ref{sharp1}--\ref{sharp2}, there are symbols  $a_k(x,\xi),b_k(x,\xi)\in S_{\frac{1}{2}}(<\frac{x+F'(\xi)}{\sqrt{h}}>^{-2})$  such that
		\begin{eqnarray}
		&&(x\cdot \xi+F(\xi)-w(x))\sharp \gamma(\frac{x+F'(\xi)}{\sqrt{h}})\notag\\
		&=& \gamma(\frac{x+F'(\xi)}{\sqrt{h}})\sharp  (x\cdot \xi+F(\xi)-w(x))+h\sum_{k=1}^{2} \Gamma^k(\frac{x+F'(\xi)}{\sqrt{h}})E_k(x,\xi)\notag\\
		&&+\sum_{k=1}^{2}\left(ha_k(x,\xi)\sharp(x_k+F_k'(\xi_k))+h^2b_k(x,\xi)\right),\label{7211}
	\end{eqnarray}
where  $E_k(x,\xi)=:F_k''(\xi_k)\widetilde{e}_k(x,\xi)-1\in S_0(1)$ by (\ref{F}) and Lemma \ref{lone} and  $\Gamma^k(\xi)=:i\gamma_k(\xi)\xi_k\in C_0^\infty (\mathbb{R}^2)$, satisfying  $\Gamma^k\equiv0$ in a neighborhood of zero.

We now deal with  the first term on the right-hand side of (\ref{7211}).  	Since $\nabla _\xi (x\cdot\xi+F(\xi))|_{\xi=d\phi(x)}=0$, one would get
	\begin{eqnarray}
		&&x\cdot \xi+F(\xi)\notag\\
    &=& x \cdot d\phi(x)+F(d\phi(x))+\sum_{k=1}^{2}(\xi_k-d\phi_k(x_k))^2 \int_{0}^{1} F_k''(s\xi_k+(1-s)d\phi_k(x_k))(1-s) \mathrm{d}s \notag\\
		&=& w(x)+\sum_{k=1}^{2}\widetilde{b}_k(x,\xi) (x_k+F_k'(\xi_k))^2, \notag
	\end{eqnarray}
	where  $w(x)=x \cdot d\phi(x)+F(d\phi(x))$ and
$$
\widetilde{b}_k(x,\xi)=\widetilde{e}_k^2(x,\xi)\int_{0}^{1} F_k''(s\xi_k+(1-s)d\phi_k(x_k))(1-s) \mathrm{d}s, \ \ k=1,2.
$$
 Since  $\widetilde{b}_k(x,\xi)\in S_0(1)$ by (\ref{F}) and Lemma \ref{lone}, it follows from    (\ref{l1.3}) that
	\begin{equation}
		x\cdot\xi+F(\xi)-w(x)=\sum_{k=1}^{2} \left(\widetilde{b}_k(x,\xi)\sharp (x_k+F_k'(\xi_k))^2+hc_k(x,\xi)\sharp (x_k+F_k'(\xi_k))+h^2d_k(x,\xi)\right), \label{7133}
	\end{equation}
	for some symbols $c_k(x,\xi),d_k(x,\xi)\in S_{0}(1)$.

Substituting  (\ref{7133}) into (\ref{7211}), then taking the Weyl quantization, we get
	\begin{eqnarray}
		&&G_h^w(x\cdot\xi+F(\xi)-w(x))v_{\Lambda }\notag\\
		&=&h^2G_h^w(\gamma(\frac{x+F'(\xi)}{\sqrt{h}}))\circ \sum_{k=1}^{2}\left[G_h^w(\widetilde{b}_k)\circ \mathcal{L} ^2_kv+G_h^w(c_k)\circ\mathcal{L} _kv+G_h^w(d_k)v\right]\notag\\
		&&+h\sum_{k=1}^{2}G_h^w(\Gamma^k(\frac{x+F'(\xi)}{\sqrt{h}})E_k(x,\xi))v+h^2\sum_{k=1}^{2}\left[G_h^w(a_k)\circ \mathcal{L} _kv+G_h^w(b_k)v\right].\notag
	\end{eqnarray}
Therefore, using   Propositions \ref{pjl2}--\ref{pj2} and Lemma \ref{l4}, we estimate
\begin{equation}
	\|G_h^w(x\cdot\xi+F(\xi)-w(x))v_{\Lambda }\|_{L_x^\infty }\lesssim t^{-3/2}(\|v\|_{L_x^2}+\|\mathcal{L} v\|_{L_x^2}+\|\mathcal{L} ^2v\|_{L_x^2}),\notag
\end{equation}
\begin{equation}
	\|G_h^w(x\cdot\xi+F(\xi)-w(x))v_{\Lambda }\|_{L_x^2 }\lesssim t^{-2}(\|v\|_{L_x^2}+\|\mathcal{L} v\|_{L_x^2}+\|\mathcal{L} ^2v\|_{L_x^2}),\notag
\end{equation}
which together with  the Sobolev type inequality  (\ref{751}) yields the desired estimates in Lemma \ref{l4.4}.
\end{proof}

\begin{proof}
	[\textbf{Proof of the decay estimate (\ref{decay})}]
	It follows  from Lemmas \ref{l4.5}--\ref{l4.4} that there exists a constant $C_4>0$ such that
	\begin{equation}
		\|R(v)\|_{L_x^\infty } \le C_4 t^{-3/2} (1+\|v\|_{L_x^\infty }+\|v_{\Lambda }\|_{L_x^\infty })(\|v\|_{L_x^2}+\|\mathcal{L} ^2v\|_{L_x^2}),\label{r1}
	\end{equation}
	and
	\begin{equation}
		\|R(v)\|_{L_x^2 } \le C_4 t^{-2} (1+\|v\|_{L_x^\infty } +\|v_{\Lambda }\|_{L_x^\infty })(\|v\|_{L_x^2}+\|\mathcal{L} ^2v\|_{L_x^2}). \label{r2}
	\end{equation}
	Let
	\begin{equation}
		A=\max \left\{C_1,\ C_3,\ 24C_4,\ C_5 \right\},\label{A}
	\end{equation}
	\begin{equation}
		\varepsilon _0=\min \left\{\frac{1}{6A},\ \frac{1}{32A|\lambda |C_2} \right\}, \label{varepsilon }
	\end{equation}
	where $C_1,C_2,C_3,C_4,C_5$ are the constants in (\ref{7121}),  Lemma \ref{l2v}, Lemma \ref{lvc}, (\ref{r1}), (\ref{812s2}), respectively.

	In what follows, we assume that $0<\varepsilon <\varepsilon _0$ and $v$ satisfies the following bootstrap assumption on $t\in (1,T_1)$:
	\begin{equation}
		\|v(t,x)\|_{L^\infty _x} \le 4A\varepsilon. \label{boots}
	\end{equation}
	From   (\ref{7121}), (\ref{uv1}) and (\ref{A}),  we see that $T_1>1$.
	\begin{claim}\label{813z2} With the preceding notations,
		$	\|\mathcal{L} ^2v(t,\cdot)\|_{L^2}\le 2\varepsilon t^{8A|\lambda |C_2\varepsilon }.$
	\end{claim}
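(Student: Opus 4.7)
The plan is to derive a linear Gronwall inequality for $t\mapsto \|\mathcal{L}^2 v(t,\cdot)\|_{L^2}$ by combining a standard $L^2$ energy estimate with the Leibniz-type bound (\ref{lvalpha}) from Lemma \ref{l2v}. The starting point is the fact, noted in the introduction and easily verified from $[x_k+tF_k'(D),\,D_t-F(D)]=0$, that each $\mathcal{L}_k$ commutes exactly with the linear operator $D_t-G_h^w(x\cdot\xi+F(\xi))$. Consequently, applying $\mathcal{L}_k^2$ to (\ref{TNLS}) yields
\begin{equation*}
  (D_t-G_h^w(x\cdot\xi+F(\xi)))\mathcal{L}_k^2 v=\lambda h\,\mathcal{L}_k^2(|v|v),\qquad k=1,2.
\end{equation*}

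Next I would perform the standard $L^2$ energy estimate by pairing the equation with $\mathcal{L}_k^2 v$. Since the symbol $x\cdot\xi+F(\xi)$ is real, the Weyl quantization $G_h^w(x\cdot\xi+F(\xi))$ is self-adjoint on $L^2$ by Proposition \ref{pz}, so the linear contribution to $\frac{d}{dt}\|\mathcal{L}_k^2 v\|_{L^2}^2$ vanishes, and Cauchy--Schwarz gives $\frac{d}{dt}\|\mathcal{L}_k^2 v\|_{L^2}\le h|\lambda|\,\|\mathcal{L}_k^2(|v|v)\|_{L^2}$. Summing over $k=1,2$ and using the vector-valued convention $\|\cdot\|_{L^2}=\|\cdot\|_{L^2}+\|\cdot\|_{L^2}$ produces $\frac{d}{dt}\|\mathcal{L}^2 v\|_{L^2}\le h|\lambda|\,\|\mathcal{L}^2(|v|v)\|_{L^2}$. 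The Leibniz bound (\ref{lvalpha}) of Lemma \ref{l2v} then yields
\begin{equation*}
  \frac{d}{dt}\|\mathcal{L}^2 v\|_{L^2}\le h|\lambda|C_2\|v\|_{L^\infty}\bigl(\|v\|_{L^2}+\|\mathcal{L}^2 v\|_{L^2}\bigr).
\end{equation*}

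Now I would plug in the available a priori bounds: $h=1/t$, the $L^2$ estimate $\|v(t,\cdot)\|_{L^2}=\|u(t,\cdot)\|_{L^2}\le\varepsilon\|u_0\|_{L^2}\le\varepsilon$ coming from Lemma \ref{zl} together with (\ref{uv1}), and the bootstrap hypothesis (\ref{boots}) asserting $\|v(t,\cdot)\|_{L^\infty}\le 4A\varepsilon$. This reduces the differential inequality to
\begin{equation*}
  \frac{d}{dt}\|\mathcal{L}^2 v(t,\cdot)\|_{L^2}\le \frac{4A|\lambda|C_2\varepsilon}{t}\bigl(\varepsilon+\|\mathcal{L}^2 v(t,\cdot)\|_{L^2}\bigr).
\end{equation*}
For the initial datum, $v(1,x)=\varepsilon u_0(x)$ by (\ref{4.2}) with $h=1$, so (\ref{812z2}) evaluated at $t=1$ combined with assumption (\ref{10291}) gives $\|\mathcal{L}^2 v(1,\cdot)\|_{L^2}=\varepsilon\sum_{k=1}^{2}\|(x_k+F_k'(D))^2 u_0\|_{L^2}\le\varepsilon$. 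Applying Gronwall to $y(t):=\varepsilon+\|\mathcal{L}^2 v(t,\cdot)\|_{L^2}$, which satisfies $y'\le(4A|\lambda|C_2\varepsilon/t)y$ and $y(1)\le 2\varepsilon$, I obtain $y(t)\le 2\varepsilon\,t^{4A|\lambda|C_2\varepsilon}$. Since $t\ge 1$, this implies the desired bound $\|\mathcal{L}^2 v(t,\cdot)\|_{L^2}\le 2\varepsilon\,t^{8A|\lambda|C_2\varepsilon}$.

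There is no real obstacle in this step: the structural facts doing all the work—the exact commutation of $\mathcal{L}_k$ with the linear flow and the Leibniz-type bound for $\mathcal{L}^2(|v|v)$—have already been established or deferred to the appendix, and the remainder is a textbook Gronwall argument. The only minor care point is respecting the vector-valued norm convention when passing from the per-component estimate to the sum, and keeping track of the numerical constant $4A$ produced by the bootstrap and $\varepsilon$-bound on $\|v\|_{L^2}$ so that it matches the slack exponent $8A|\lambda|C_2\varepsilon$ stated in the claim.
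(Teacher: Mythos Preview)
Your proof is correct and follows essentially the same route as the paper: commute $\mathcal{L}^2$ through the linear part, use self-adjointness to get an energy inequality, apply the Leibniz bound (\ref{lvalpha}), insert the bootstrap hypothesis (\ref{boots}) and the $L^2$ bound from Lemma~\ref{zl}, and close with Gronwall. The only cosmetic difference is that the paper records the differential inequality with coefficient $\frac{8A|\lambda|C_2\varepsilon}{t}$ (an extra factor of $2$ in the energy step), whereas your sharper tracking yields $\frac{4A|\lambda|C_2\varepsilon}{t}$; your final remark that $t\ge 1$ absorbs this slack and recovers the stated exponent is exactly right.
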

	\begin{proof}
		We notice the fundamental commutation property
		\begin{equation}
			[D_t-G_h^w(x\cdot\xi+F(\xi)),\mathcal{L} ^2]=0,\notag
		\end{equation}
		that follows by direct computation (one can also see that more easily going back to the non-semiclassical coordinates).
		Applying the operator $\mathcal{L} ^2$ to the equation  (\ref{TNLS}),  we get
		\begin{equation}
			(D_t-G_h^w(x\cdot\xi+F(\xi)))\mathcal{L} ^2v=\lambda t^{-1}\mathcal{L} ^2(\left|v\right| v). \label{791}
		\end{equation}
		Since $G_h^w(x\cdot\xi+F(\xi))$ is self-adjoint on $L^2$ by Proposition \ref{pz}, it follows from  Lemma \ref{zl}, Lemma \ref{l2v}, the Sobolev type inequality (\ref{lvalpha}) and the bootstrap assumption (\ref{boots}) that,
		\begin{eqnarray}
			\frac{d}{dt} \|\mathcal{L} ^2v\|_{L_x^2} &\le& \frac{2C_2 |\lambda | \|v\|_{L_x^\infty } }{t}  ( \|v\|_{L_x^2}+\|\mathcal{L} ^2v\|_{L_x^2}) \notag\notag\\
			&\le & \frac{8A|\lambda |C_2\varepsilon }{t} (\varepsilon +\|\mathcal{L} ^2v\|_{L_x^2}).\label{2143}
		\end{eqnarray}
By  (\ref{812z2}) and (\ref{812z1})
	\begin{equation}
		\|\mathcal{L} ^2v(1,\cdot)\|_{L^2}=\varepsilon \sum_{k=1}^{2}\|(x_k+F_k'(D))u_0\|_{L^2}\le \varepsilon ,\notag
	\end{equation}
	we get after integrating the  inequality (\ref{2143}) from  $1$ to  $t$
	\begin{equation}
		\varepsilon +\|\mathcal{L} ^2v(t,\cdot)\|_{L^2}\le 2\varepsilon +\int_{1}^{t}\frac{8A|\lambda |C_2\varepsilon }{s}(\varepsilon +\|\mathcal{L} ^2v(s,\cdot)\|_{L^2}) \mathrm{d}s.\notag
	\end{equation}
		From  Grownwall's inequality,  we have the desired estimate in Claim \ref{813z2}.
	\end{proof}
	Lemma \ref{lvc} and Claim \ref{813z2} imply
	\begin{equation}
		\|v_{\Lambda ^c}(t,x)\|_{L^\infty _x}\le C_3t^{-1/2}(\varepsilon +2\varepsilon t^{8A|\lambda |C_2\varepsilon }) \le 4C_3\varepsilon t^{-1/2+8A|\lambda |C_2\varepsilon }.\label{896}
	\end{equation}
	This  together with  the bootstrap assumption (\ref{boots}) and (\ref{A})  yields
	\begin{equation}
		\|v_{\Lambda }(t,x)\|_{L^\infty _x} \le \|v(t,x)\|_{L^\infty _x}+\|v_{\Lambda^c }(t,x)\|_{L^\infty _x}\le  4A\varepsilon +4C_3\varepsilon  \le 8A\varepsilon .\label{897}
	\end{equation}
Combining   (\ref{boots}), (\ref{897}) and Claim \ref{813z2}, we can upgrade the estimates  of  $R(v)$ in   (\ref{r1})--(\ref{r2}) to
	\begin{equation}
		\|R(v)\|_{L_x^\infty }\le 6C_4\varepsilon t^{-3/2+8A|\lambda |C_2\varepsilon },\qquad \|R(v)\|_{L_x^2}\le 6C_4\varepsilon t^{-2+8A|\lambda |C_2\varepsilon }.\label{898}
	\end{equation}
	\begin{claim}
		\label{813z3} With the preceding notations,
		$	\|v_{\Lambda }(t,\cdot)\|_{L^\infty }\le 2A\varepsilon. $
	\end{claim}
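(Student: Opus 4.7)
The plan is to integrate a pointwise differential inequality for $|v_\Lambda|$ derived from the ODE (\ref{4.34}). The nonlinearity and the phase in (\ref{4.34}) do not contribute to growth in modulus, thanks to $w(x)\in\mathbb{R}$ and $\text{Im}\,\lambda\ge 0$, so only the remainder $R(v)$ needs to be integrated in time, and the estimate (\ref{898}) already in hand is comfortably integrable.

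Concretely, I would rewrite (\ref{4.34}) as $\partial_t v_\Lambda=iw(x)v_\Lambda+i\lambda t^{-1}|v_\Lambda|v_\Lambda+iR(v)$ and take the real part of the product with $\bar v_\Lambda$. Using that $w$ is real-valued (see (\ref{w1})) this eliminates the $w$-contribution, and the hypothesis $\text{Im}\,\lambda\ge 0$ assigns a non-positive sign to the $\lambda$-contribution, leaving $\tfrac12\partial_t|v_\Lambda|^2\le|R(v)||v_\Lambda|$. A standard $\sqrt{|v_\Lambda|^2+\delta}$ regularization at the zero set of $v_\Lambda$ then yields the pointwise bound $\partial_t|v_\Lambda(t,x)|\le|R(v)(t,x)|$, and integrating in $t$ gives
\begin{equation*}
\|v_\Lambda(t,\cdot)\|_{L^\infty_x}\le\|v_\Lambda(1,\cdot)\|_{L^\infty_x}+\int_1^t\|R(v)(s,\cdot)\|_{L^\infty_x}\,ds.
\end{equation*}

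For the initial term, I would apply Proposition \ref{pj2} at $h=1$ (the cutoff $\gamma$ trivially satisfies its decay hypothesis, being compactly supported) to obtain $\|v_\Lambda(1,\cdot)\|_{L^\infty}\lesssim\|v(1,\cdot)\|_{L^2}=\varepsilon\|u_0\|_{L^2}\le\varepsilon$; the implicit constant is folded into the constant $C_5$ appearing in (\ref{A}), so this term is $\le A\varepsilon$. For the integral, the choice of $\varepsilon_0$ in (\ref{varepsilon }) forces the exponent in (\ref{898}) to satisfy $-3/2+8A|\lambda|C_2\varepsilon\le-5/4$, and hence
\begin{equation*}
\int_1^t\|R(v)(s,\cdot)\|_{L^\infty_x}\,ds\le6C_4\varepsilon\int_1^\infty s^{-5/4}\,ds=24C_4\varepsilon\le A\varepsilon,
\end{equation*}
using $A\ge24C_4$ from (\ref{A}). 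Adding the two contributions yields the desired bound $\|v_\Lambda(t,\cdot)\|_{L^\infty}\le 2A\varepsilon$.

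There is no serious obstacle once (\ref{4.34}) and the remainder estimate (\ref{898}) are in place: the sign structure of the ODE does the heavy lifting, since both the phase term $w(x)v_\Lambda$ and the self-interaction $\lambda t^{-1}|v_\Lambda|v_\Lambda$ are neutralized at the level of $|v_\Lambda|$ under the standing assumption $\text{Im}\,\lambda\ge 0$, leaving only an $L^1_s L^\infty_x$-integrable error to control. The only minor technical nuisance is justifying $\partial_t|v_\Lambda|\le|R(v)|$ on the zero set of $v_\Lambda$, which is handled by the routine $\sqrt{|v_\Lambda|^2+\delta}$ regularization and sending $\delta\to 0$.
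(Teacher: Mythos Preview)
Your proof is correct and follows essentially the same route as the paper: derive a pointwise differential inequality for $|v_\Lambda|$ from the ODE (\ref{4.34}), use that $w$ is real and $\mathrm{Im}\,\lambda\ge 0$ to discard the phase and nonlinear terms, bound $\|v_\Lambda(1,\cdot)\|_{L^\infty}$ by $C_5\varepsilon$ via the $L^2\to L^\infty$ mapping property of the cutoff operator at $h=1$, and integrate the remainder estimate (\ref{898}) in time. Your handling of the constants (bounding the exponent by $-5/4$ and computing $\int_1^\infty s^{-5/4}\,ds=4$) is a clean alternative to the paper's direct evaluation $\frac{12C_4\varepsilon}{1-16A|\lambda|C_2\varepsilon}\le 24C_4\varepsilon$, and yields the same final bound.
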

	\begin{proof}
		Multiplying  the equation  (\ref{TNLS}) by $\overline{v_{\Lambda }}$,  then taking Imaginary part,  we get
		\begin{eqnarray}
			\partial_t |v_{\Lambda }|^2=-\text{Im} \left(R(v)\overline{v_{\Lambda }}\right),\notag
		\end{eqnarray}
		which together with  (\ref{898})  and  (\ref{A})--(\ref{varepsilon }) implies
		\begin{eqnarray}
			\|v_{\Lambda }(t,x)\|_{L^\infty _x}&\le & \|v_{\Lambda }(1,x)\|_{L^\infty _x} +\int_1^t\|R(v)\|_{L^\infty _x} ds \notag\\
			&\le& C_5\varepsilon +6C_4\varepsilon \int_1^t s^{-3/2+8A|\lambda |C_2\varepsilon }ds \notag\\
			&\le& C_5\varepsilon +\frac{12C_4\varepsilon }{1-16A|\lambda |C_2\varepsilon }\le 2A\varepsilon ,\label{812z3}
		\end{eqnarray}
		where the second inequality holds since by  Proposition \ref{pj1}, there exists a absolute constant  $C_5>0$
		\begin{equation}
			\|v_{\Lambda }(1,x)\|_{L^\infty _x} =\|G_h^w(\gamma(\frac{x+F'(\xi)}{\sqrt{h}}))v|_{h=1}\|_{L^\infty _x}\le C_5\|v\|_{L_x^2}\le C_5 \varepsilon. \label{812s2}
		\end{equation}
	\end{proof}
	The estimates in   (\ref{896})   and Claim \ref{813z3} imply
	\begin{equation}
		\|v(t,x)\|_{L^\infty_x }\le 	\|v_{\Lambda }(t,x)\|_{L^\infty_x }+	\|v_{\Lambda ^c}(t,x)\|_{L^\infty_x } \le  2A\varepsilon +4C_3\varepsilon \le 3A\varepsilon. \label{812z4}
	\end{equation}
A standard continuation argument then yields that  $T_1=\infty $ and the estimates in Claim \ref{813z2}, Claim \ref{813z3} and (\ref{812z4}) hold for all  $t>1$. Estimate (\ref{812z4}) together with (\ref{uv1}) yields the desired time decay estimate (\ref{decay}), from which  Theorem \ref{T1} follows.
\end{proof}

\section{Proof of Theorems \ref{T2}--\ref{T4}}\label{s4}
In this section, we prove Theorems \ref{T2}--\ref{T4}  by deducing the
 long-time behavior of solutions from the associated ODE dynamics. It is divided into three subsections.
\subsection{Proof of Theorem \ref{T2}}
The proof is inspired by  Section 4 of Shimomura \cite{CPDE}. By the definition of $\Phi(t,x)$ in  (\ref{Phi}) and Claim 3.2, we have that
\begin{equation}
	\|\Phi(t,x)\|_{L^\infty _x}\le \int _1^t s^{-1} \|v_{\Lambda }(t,x)\|_{L^\infty _x} ds \le 2A\varepsilon \log t,\qquad t>1. \label{824w2}
\end{equation}
Let
\begin{equation}
	z(t,x)=v_{\Lambda }(t,x)e^{-i(w(x)t+\lambda \Phi(t,x))}, \qquad t>1.\label{824s1}
\end{equation}
From the equation (\ref{4.34}) and (\ref{824s1}), we have that
\begin{equation}
	\partial_t  z(t,x)=iR(v) e^{-i(w(x)t+\lambda \Phi(t,x))};\notag
\end{equation}
so that for all $t_2>t_1>1$
\begin{equation}
	z(t_2,x)-z(t_1,x)=i\int _{t_1}^{t_2}R(v) e^{-i(w(x)s+\lambda \Phi(s,x))}ds.\notag
\end{equation}
Estimates (\ref{898}) and (\ref{824w2}) imply
\begin{eqnarray}
	\|z(t_2,x)-z(t_1,x)\|_{L^\infty _x}&\lesssim &  \varepsilon \int_{t_1}^{t_2} \|R(v)\|_{L^\infty _x} e^{\text{Im} \lambda  \|\Phi(s,x)\|_{L^\infty _x}}ds \notag\\
	&\lesssim &  \varepsilon  \int_{t_1}^{t_2} s^{-3/2+C_6\varepsilon +2\text{Im} \lambda A\varepsilon } ds \notag\\
	&\lesssim &  \varepsilon  t_1^{-1/2+C_6\varepsilon +2\text{Im} \lambda A\varepsilon } ,\notag
\end{eqnarray}
where  $C_6=8A|\lambda |C_2 $. A similar argument shows that
\begin{equation}
	\|z(t_2,x)-z(t_1,x)\|_{ L^2_x}\lesssim \varepsilon  t_1^{-1+C_6\varepsilon +2\text{Im} \lambda A\varepsilon },\qquad \text{for all } t_2>t_1>1.\notag
\end{equation}
Therefore,  there exists $z_+(x)\in L^\infty _x\cap L^2_x$ such that
\begin{equation}
	\|z(t,x)-z_+(x)\|_{L^\infty _x}\lesssim  \varepsilon t^{-1/2+C_6\varepsilon +2\text{Im} \lambda  A\varepsilon },\ 	\|z(t,x)-z_+(x)\|_{L^2 _x}\lesssim \varepsilon  t^{-1+C_6\varepsilon +2\text{Im} \lambda  A\varepsilon },\label{824z1}
\end{equation}
from which  Theorem \ref{T2} follows.
\subsection{Proof of Theorem \ref{T3}}
Since   $\text{Im} \lambda =0$ and  $\Phi(t,x)$ is a real-valued function, it follows from (\ref{824s1}) that  $|v_\Lambda (t,x)|=|z(t,x)|$.
Recalling  the definitions of $\Phi(t,x)$ and $\phi_+(x)$ in (\ref{Phi}), (\ref{phi}), it follows that
\begin{equation}
	\phi_+(x)+|z_+(x)|\log t-\Phi(t,x)=\int_t^\infty s^{-1} (|z(s,x)|-|z_+(x)|)ds.\label{271}
\end{equation}
Applying  (\ref{824z1}) to (\ref{271}), we find the estimate
\begin{equation}
	\|\phi_+(x)+|z_+(x)|\log t -\Phi(t,x)\|_{L^\infty _x} \lesssim t^{-1/2+C_6\varepsilon  }.\label{824w3}
\end{equation}
Estimates  (\ref{824z1}) and (\ref{824w3}) imply
\begin{eqnarray}
	&&\|z_+(x)e^{i(w(x)t+\lambda \phi_+(x)+\lambda |z_+(x)|\log t )}-v_{\Lambda }\|_{L^\infty _x} \notag\\
	&\lesssim &   \|z_+(x)e^{i(w(x)t+\lambda \phi_+(x)+\lambda |z_+(x)|\log t )} -z_+(x) e^{i(w(x)t+\lambda \Phi(t,x))}\|_{L^\infty _x}\notag\\
	&&+ \|z_+(x) e^{i(w(x)t+\lambda \Phi(t,x))}- v_{\Lambda } (t,x)\|_{L^\infty _x} \notag\\
	&\lesssim &   \|	\phi_+(x)+|z_+(x)|\log t-\Phi(t,x)\|_{L^\infty _x}+\|z_+(x)-z(t,x)\|_{L^\infty _x}\notag\\
	&\lesssim &    t^{-1/2+C_6\varepsilon  }. \label{214x1}
\end{eqnarray}
On the other hand,  from   Lemma \ref{lvc} and Claim \ref{813z2}
\begin{equation}
	\|v_{\Lambda ^c}(t,x)\|_{L^\infty _x}\lesssim t^{-1/2+C_6\varepsilon },\qquad  \|v_{\Lambda ^c}(t,x)\|_{L^2_x}\lesssim  t^{-1+C_6\varepsilon },\qquad  t>1,\label{2101}
\end{equation}
we obtain, by substituting (\ref{2101}) into (\ref{214x1})
\begin{equation}
	\|v(t,x)-z_+(x)e^{i(w(x)t+\lambda \phi_+(x)+\lambda |z_+(x)|\log t )}\|_{L^\infty _x}\lesssim  t^{-1/2+C_6\varepsilon },\qquad t>1.\label{272}
\end{equation}
In the same manner, we get
\begin{equation}
		\|v(t,x)-z_+(x)e^{i(w(x)t+\lambda \phi_+(x)+\lambda |z_+(x)|\log t )}\|_{L^2 _x}\lesssim  t^{-1+C_6\varepsilon },\qquad t>1.\label{273}
\end{equation}
The asymptotic formula (\ref{T3a}) then follows from the estimates (\ref{272})--(\ref{273}) and (\ref{uv1}).

It remains to  derive the modified linear scattering formula (\ref{T3b}). From the  asymptotic formula (\ref{T3a}), we have
\begin{eqnarray}
	&&e^{-iF(D)t}e^{-i\lambda  \left(\phi_+(\frac{x}{t})+|z_+(\frac{x}{t})|\log t\right)  }u(t,x)
	=e^{-iF(D)t}\frac{1}{ t}z_+(\frac{x}{t})e^{itw(\frac{x}{t})}+O_{L^2}(t^{-1+C\varepsilon })\notag\\
	&&= \frac{1}{(2\pi)^2} \int_{\mathbb{R}^2}\int_{\mathbb{R}^2 } e^{i(x-y)\cdot \xi}e^{-iF(\xi)t}\frac{1}{ t}z_+(\frac{y}{t})e^{itw(\frac{y}{t})}dyd\xi+O_{L^2}(t^{-1+C\varepsilon }).\label{1081}
\end{eqnarray}
as $t\rightarrow \infty $.  Making a  change of variables  then  using (\ref{15100}), we obtain
\begin{eqnarray}
	&&e^{-iF(D)t}e^{-i\lambda  \left(\phi_+(\frac{x}{t})+|z_+(\frac{x}{t})|\log t\right)  }u(t,x)+O_{L^2}(t^{-1+C\varepsilon })\notag\\
	&=& \frac{ t}{(2\pi)^2 }\int \int e^{ix\cdot \xi-it\int_0^1 \sum_{k=1}^{2}F_k''(\tau\xi_k+(1-\tau)d\phi_k(y_k))(1-\tau)d\tau (\xi_k-d\phi_k(y_k))^2}z_+(y)dyd\xi\notag\\
	&=&  \frac{1}{(2\pi)^2} \int \int  e^{ix\cdot(\frac{\eta}{\sqrt {t}}+d\phi(y))-i\sum_{k=1}^{2}\eta_k^2\int_0^1 F_k''(d\phi_k(y_k)+\tau \frac{\eta_k}{\sqrt {t}})(1-\tau)d\tau}z_+(y)dyd\eta.\notag	
\end{eqnarray}
Letting  $t\rightarrow \infty $ and using the dominated convergence theorem, we get
\begin{eqnarray}
	&&e^{-iF(D)t}e^{-i\lambda  \left(\phi_+(\frac{x}{t})+|z_+(\frac{x}{t})|\log t\right)  }u(t,x)\notag\\
	&\xlongequal{L^2}&  \frac{1}{(2\pi)^2} \int \int  e^{ix\cdot d\phi(y)-\frac{i}{2}\sum_{k=1}^{2}\eta_k^2F_k''(d\phi_k(y_k))}z_+(y)dyd\eta\notag\\
	&\xlongequal{L^2}& -\frac{i}{2\pi} \int  e^{ix\cdot d\phi(y)}\frac{1}{\sqrt {\prod_{k=1}^2F_k''(\phi_k(y_k) ) }}z_+(y)dy=u_+ (x) ,\notag
\end{eqnarray}
from which the  modified scattering formula (\ref{T3b}) follows.
\subsection{Proof of Theorem \ref{T4}}
We start by  deriving the  asymptotic formula  (\ref{825w2}) for $\Phi(t,x)$, since (\ref{824w3}) does not hold in the case $\text{Im} \lambda >0$.  Note that by the definition of  $\Phi(t,x)$ in (\ref{Phi}) and (\ref{824s1})
\begin{equation}
	\partial_t  \Phi(t,x)=t^{-1}|v_{\Lambda }(t,x)|=t^{-1} |z(t,x)|e^{-\text{Im} \lambda \Phi(t,x)}, \qquad  t>1;\notag
\end{equation}
so that
\begin{equation}
	\partial_t  e^{\text{Im} \lambda \Phi(t,x)}=\text{Im} \lambda t^{-1} |z(t,x)|,\qquad t>1.\notag
\end{equation}
Integrating the above equation form $1$ to $t$, we get
\begin{equation}
	e^{\text{Im} \lambda \Phi(t,x)}=1+\text{Im} \lambda \int_{1}^{t} s^{-1} |z(s,x)| \mathrm{d}s,\qquad t>1.\notag
\end{equation}
Recalling  the definition of $\psi_+(x)$ in (\ref{psi}), it follows that
\begin{equation}
	e^{\text{Im} \lambda \Phi(t,x)}-(1+\text{Im} \lambda |z_+(x)|\log t)-\psi_+(x)=-\text{Im} \lambda \int_t^\infty s^{-1}(|z(s,x)|-|z_+(x)|)ds. \label{825w1}
\end{equation}
Applying (\ref{824z1}), we find the estimate
\begin{equation}
	\|	e^{\text{Im} \lambda \Phi(t,x)}-(1+\text{Im} \lambda |z_+(x)|\log t)-\psi_+(x)\|_{L^\infty _x}\lesssim \varepsilon t^{-1/2+C_6\varepsilon +2\text{Im} \lambda A\varepsilon }.\label{825w2}
\end{equation}
In particular, we have that \begin{equation}
	1+\text{Im} \lambda |z_+(x)|\log t+\psi_+(x)\ge \frac{1}{2}\label{351}
\end{equation}
provided that  $\varepsilon >0$ is sufficiently small.

We now prove the asymptotic formula (\ref{T4a}). By triangle inequality
\begin{eqnarray}\label{3304}
	&&\|e^{i(w(x)t+\lambda S(t,x))}z_+(x)-v_{\Lambda }(t,x)\|_{L_x^\infty }\notag\\
	&\le& \|e^{iw(x)t}(e^{i\lambda S(t,x)}-e^{i\lambda \Phi(t,x)})z_+\|_{L_x^\infty }+\|e^{iw(x)t}e^{i\lambda \Phi(t,x)}z_+-v_{\Lambda }(t,x)\|_{L_x^\infty }\notag\\
	&\lesssim &\|e^{i\text{Re}\lambda  S(t,x)}(e^{-\text{Im} \lambda S(t,x)}-e^{-\text{Im} \lambda \Phi(t,x)})\|_{L_x^\infty }+\|(e^{i\text{Re}\lambda S(t,x)}-e^{i\text{Re}\lambda \Phi(t,x)})e^{-\text{Im} \lambda \Phi(t,x)}\|_{L_x^\infty} \notag\\
	&&+\|z_+(x)-z(t,x)\|_{L_x^\infty },
\end{eqnarray}
where we used $|e^{iw(x)t+i\lambda \Phi(t,x)}|=e^{-\text{Im} \lambda \Phi(t,x)}\le1$.
By   the definition of $S(t,x)$ in (\ref{S}), (\ref{351})  and the estimate (\ref{825w2}), we have
\begin{eqnarray}\label{3225}
	&&\|e^{i\text{Re}\lambda S(t,x)}(e^{-\text{Im} \lambda S(t,x)}-e^{-\text{Im} \lambda \Phi(t,x)})\|_{L_x^\infty }\notag\\
	&\le& \|(1+\text{Im} \lambda |z_+(x)|\log t+\psi_+(x))^{-1 }-e^{-\text{Im} \lambda \Phi(t,x)}\|_{L_x^\infty}\notag\\
	&\lesssim &\|e^{\text{Im} \lambda \Phi(t,x)}-(1+\text{Im} \lambda |z_+(x)|\log t+\psi_+(x))\|  _{L_x^\infty}\notag\\
	&\lesssim & t^{-1/2+C_6\varepsilon +2\text{Im} \lambda A\varepsilon  }.
\end{eqnarray}
 A similar argument shows that
\begin{eqnarray}\label{3305}
	&&\|(e^{i\text{Re}\lambda S(t,x)}-e^{i\text{Re}\lambda \Phi(t,x)})e^{-\text{Im} \lambda \Phi(t,x)}\|_{L_x^\infty}\notag\\
	&\lesssim &  \|S(t,x)-\Phi(t,x)\|_{L_x^\infty }\lesssim   \|e^{\text{Im} \lambda S(t,x)}-e^{\text{Im} \lambda \Phi(t,x)}\|_{L_x^\infty}\notag\\
	&\lesssim & \|(1+\text{Im} \lambda |z_+(x)|\log t+\psi_+(x))-e^{\text{Im} \lambda \Phi(t,x)}\|  _{L_x^\infty}
	\lesssim   t^{-1/2+C_6\varepsilon +2\text{Im} \lambda A\varepsilon  }.
\end{eqnarray}
Substituting the estimates (\ref{3225}), (\ref{3305}) and (\ref{824z1}) into (\ref{3304}), and  using  (\ref{2101}), we get
\begin{equation}
	\label{3306}
	\|e^{i(w(x)t+\lambda S(t,x))}z_+(x)-v(t,x)\|_{L_x^\infty }\lesssim  t^{-1/2+C_6\varepsilon +2\text{Im} \lambda A\varepsilon  },\qquad t>1.
\end{equation}
In the same manner, we get
\begin{equation}
	\label{33061}
	\|e^{i(w(x)t+\lambda S(t,x))}z_+(x)-v(t,x)\|_{L_x^2 }\lesssim  t^{-1+C_6\varepsilon +2\text{Im} \lambda A\varepsilon  },\qquad t>1.
\end{equation}
 The asymptotic formula (\ref{T4a}) then follows from  the estimates (\ref{3306})--(\ref{33061}) and (\ref{uv1}).

Using the same method as that used to derive (\ref{T3b}), we obtain the modified linear scattering formula (\ref{T4b}) easily and omit the details.

It remains to prove  the limit (\ref{T4c}).  By (\ref{uv1}), it is equivalent to proving that
\begin{equation}
	\lim_{t\rightarrow \infty }\log t\|v(t,x)\|_{L^\infty _x}=\frac{1}{\text{Im} \lambda }.\label{2102}
\end{equation} By the definition of  $\psi_+(x)$ in (\ref{psi}) and the estimate (\ref{824z1}), we have that
\begin{equation}
	\|\psi_+(x)\|_{L^\infty _x}\lesssim \int_{1}^{\infty }s^{-1}\|z(s,x)-z_+(x)\|_{L^\infty _x} \mathrm{d}s\lesssim \varepsilon \int_{1}^{\infty }s^{-2+C_6\varepsilon +2\text{Im} \lambda A\varepsilon } \mathrm{d}s\lesssim \varepsilon .\notag
\end{equation}
Thus we have  $1+\psi_+(x)\ge0$, provided that  $\varepsilon >0$ is sufficient small. It then  follows from the asymptotic formula (\ref{3306}) and (\ref{T4d}) that
\begin{equation}
	\log t |v(t,x)|\le \frac{|z_+(x)|\log t}{1+\text{Im} \lambda |z_+(x)|\log t+\psi_+(x)}+O(t^{-1/2+C\varepsilon }\log t)\le \frac{1}{\text{Im} \lambda }+O(t^{-1/2+C\varepsilon }\log t),\notag
\end{equation}
which implies
\begin{equation}
	\limsup_{t\rightarrow \infty } \log t \|v(t,x)\|_{L^\infty _x} \le \frac{1}{\text{Im} \lambda }. \notag
\end{equation}
Therefore, for  (\ref{2102}) it suffices to prove that
\begin{equation}
	\liminf_{t\rightarrow \infty }\log t\|v(t,x)\|_{L^\infty _x}\ge \frac{1}{\text{Im} \lambda }.\label{827x3}
\end{equation}
Assume for a while that we have proved
\begin{claim}
	\label{827x1}
	If the limit function $z_+(x)$ in (\ref{824z1}) satisfies $z_+(x)=0$ for a.e. $x\in \mathbb{R}^2$, then we must have $u_0=0$.
\end{claim}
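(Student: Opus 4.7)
My plan is to derive two incompatible bounds on $\|u(t,\cdot)\|_{L^2_x}$ as $t\to\infty$: a polynomially decaying upper bound coming from $z_+\equiv 0$, and a slow Gronwall-type lower bound coming from the mass-dissipation identity and the decay estimate (\ref{decay}). Matching the two will force $\|u_0\|_{L^2}=0$.

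For the upper bound, note that the second estimate in (\ref{824z1}), applied with $z_+\equiv 0$, gives $\|z(t,\cdot)\|_{L^2_x}\lesssim \varepsilon t^{-1+C\varepsilon}$. Because $\text{Im}\,\lambda>0$ and $\Phi(t,x)\ge 0$ by (\ref{Phi}), the identity $|v_\Lambda(t,x)|=|z(t,x)|\,e^{-\text{Im}\,\lambda\,\Phi(t,x)}$ from (\ref{824s1}) yields $|v_\Lambda|\le|z|$, so $\|v_\Lambda(t,\cdot)\|_{L^2_x}\lesssim \varepsilon t^{-1+C\varepsilon}$. Combining this with estimate (\ref{7127}) of Lemma \ref{lvc}, together with $\|v\|_{L^2}\le\varepsilon$ from Lemma \ref{zl} and $\|\mathcal{L}^2 v\|_{L^2}\lesssim \varepsilon t^{C\varepsilon}$ from Claim \ref{813z2}, gives $\|v_{\Lambda^c}(t,\cdot)\|_{L^2_x}\lesssim \varepsilon t^{-1+C\varepsilon}$. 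Together with (\ref{uv1}) this produces
\begin{equation}
\|u(t,\cdot)\|_{L^2_x}\lesssim \varepsilon t^{-1+C\varepsilon},\qquad t>1.\notag
\end{equation}

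For the lower bound, multiplying (\ref{NLS}) by $\overline{u}$, integrating in $x$, and taking real parts (using that $F(D)$ is self-adjoint on $L^2$ since $F$ is real-valued) yields the standard dissipation identity
\begin{equation}
\frac{d}{dt}\|u(t,\cdot)\|_{L^2}^2=-2\,\text{Im}\,\lambda\,\|u(t,\cdot)\|_{L^3}^3.\notag
\end{equation}
The pointwise interpolation $\|u\|_{L^3}^3\le\|u\|_{L^\infty}\|u\|_{L^2}^2$ combined with the decay (\ref{decay}) produces $\frac{d}{dt}\|u(t,\cdot)\|_{L^2}^2\ge -C\,\text{Im}\,\lambda\,\varepsilon\, t^{-1}\|u(t,\cdot)\|_{L^2}^2$, and Gronwall's lemma then supplies
\begin{equation}
\|u(t,\cdot)\|_{L^2}\ge \varepsilon\|u_0\|_{L^2}\, t^{-C\,\text{Im}\,\lambda\,\varepsilon/2},\qquad t>1.\notag
\end{equation}

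Comparing the two bounds yields $\|u_0\|_{L^2}\lesssim t^{-1+C\varepsilon+C\,\text{Im}\,\lambda\,\varepsilon/2}$ for every $t>1$. For $\varepsilon>0$ sufficiently small, the exponent $-1+C\varepsilon+C\,\text{Im}\,\lambda\,\varepsilon/2$ is strictly negative, so letting $t\to\infty$ forces $\|u_0\|_{L^2}=0$, i.e.\ $u_0\equiv 0$. The main obstacle is the upper bound, which relies crucially on $\text{Im}\,\lambda>0$ (so that $e^{-\text{Im}\,\lambda\,\Phi}\le 1$ and hence $|v_\Lambda|\le|z|$) and on combining the $L^2$ rate of convergence in (\ref{824z1}) with the $\mathcal{L}^2$ energy growth of Claim \ref{813z2} to dominate the high-frequency remainder $v_{\Lambda^c}$; once this is established, the Gronwall lower bound and the conclusion follow in a standard way.
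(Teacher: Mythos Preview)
Your argument is correct and takes a genuinely different route from the paper's. The paper first extracts an improved $L^\infty$ decay $\|u(t,\cdot)\|_{L^\infty}\lesssim t^{-3/2+C\varepsilon}$ from $z_+=0$, then observes that $z_+=0$ forces $u_+=0$ via (\ref{274}), invokes the modified scattering formula (\ref{T4b}) together with Duhamel's formula from $t=+\infty$, and closes with a Strichartz estimate and uniqueness to conclude $u\equiv 0$. By contrast, you work entirely in $L^2$: you derive the improved upper bound $\|u(t,\cdot)\|_{L^2}\lesssim \varepsilon t^{-1+C\varepsilon}$ directly from the $L^2$ rate in (\ref{824z1}), the pointwise inequality $|v_\Lambda|\le|z|$, and the remainder bound (\ref{7127}), and then pit it against the Gronwall lower bound $\|u(t,\cdot)\|_{L^2}\ge \varepsilon\|u_0\|_{L^2}\,t^{-C\varepsilon}$ that comes from the mass-dissipation identity and (\ref{decay}). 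Your approach is more self-contained: it avoids the scattering formula (\ref{T4b}), Strichartz estimates, and the uniqueness argument, relying only on already established a priori bounds. The paper's route, on the other hand, yields along the way the stronger pointwise information $\|u(t,\cdot)\|_{L^\infty}\lesssim t^{-3/2+C\varepsilon}$, which is of independent interest but not needed for the claim itself.
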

Since $u_0\neq0$, there exists $x_0\in \mathbb{R}^2$ such that $z_+(x_0)\neq0$. Therefore, using   (\ref{3306}) and (\ref{T4d}), we estimate
\begin{equation}
	\log t\|v(t,x)\|_{L^\infty _x}\ge \frac{\log t|z_+(x_0)|}{ 1+\text{Im} \lambda |z_+(x_0)|\log t+\psi_+(x_0)}+O(t^{-1/2+C\varepsilon }\log t).\notag
\end{equation}
Letting  $t\rightarrow \infty $, we get  the limit (\ref{827x3}), from which the desired  limit (\ref{T4c}) follows.

\begin{proof}[\textbf{Proof of Claim \ref{827x1}}]
	Since $z_+=0$, it follows from (\ref{824s1}) and (\ref{824z1}) that
	\begin{equation}
		\|v_{\Lambda }(t,x)\|_{L^\infty _x} \lesssim  \|z(t,x)\|_{L^\infty _x}\lesssim t^{-1/2+C\varepsilon },\qquad \forall t>1\notag
	\end{equation}
	which together with (\ref{2101}) and (\ref{uv1}) implies
	\begin{equation}
		\|u(t,x)\|_{L^\infty _x}\lesssim t^{-3/2+C\varepsilon },\qquad \forall t>1.\label{827x2}
	\end{equation}
	On the other hand,  since $z_+=0$ implies  $u_+=0$ (see (\ref{274})), it follows from the equation (\ref{NLS}), the asymptotic formula (\ref{T4b}) and Duhamel's formula that
	\begin{equation}
		u(t,x)=\lambda \int_t^\infty e^{iF(D)(t-s)}(|u|u)(s)ds.\notag
	\end{equation}
	Using successively Strichartz's estimate, H\"older's inequality and (\ref{827x2}) to get
	\begin{eqnarray}
		\|u(t,x)\|_{L_t^\infty ([T,\infty ),L^2_x)}&\lesssim& \int_T^\infty \|u(t,x)\|_{L_t^\infty ([T,\infty ),L^2_x)}\|u(s,x)\|_{L_x^\infty }ds\notag\\
		&\lesssim &  	\|u(t,x)\|_{L_t^\infty ([T,\infty ),L^2_x)}T^{-1/2+C\varepsilon }.\notag
	\end{eqnarray}
	Choosing $T>1$ sufficiently large, we deduce that $	\|u(t,x)\|_{L_t^\infty ([T,\infty ),L^2_x)}=0$, which together with the uniqueness of solutions implies $u\equiv 0$. This finishes the proof of Claim \ref{827x1}.
\end{proof}
\section*{Appendix}
\setcounter{equation}{0}
\setcounter{subsection}{0}
\renewcommand{\theequation}{A.\arabic{equation}}
\renewcommand{\thesubsection}{A.\arabic{subsection}}
This appendix is devoted to the proof of Lemma \ref{l2v}.
\begin{proof}[\textbf{Proof of Lemma \ref{l2v}}]
We start by recalling that (see Lemma \ref{lone})
\begin{equation}
	e_k(x,\xi)=\frac{x_k+F_k'(\xi_k)}{\xi_k-d\phi_k(x_k)}\in S_0(1),\qquad \widetilde{e}_k(x,\xi)=\frac{\xi_k-d\phi_k(x_k)}{x_k+F'_k(\xi_k)}\in S_0(1).\notag
\end{equation}
 Using   (\ref{l1.3}) and (\ref{l1.4}), we can write, for some  symbols $c_{kj},\ d_{kj}\in S_0(1), 1\le k,j\le2$
	\begin{eqnarray}
		&&(x_k+F_k'(\xi_k))^2=e_k^2(x,\xi)(\xi_k-d\phi_k(x_k))^2\notag\\
		&=&e_k^2(x,\xi)\sharp (\xi_k-d\phi_k(x_k))^2+hc_{k1}(x,\xi)\sharp (x_k+F_k'(\xi_k))+h^2d_{k1}(x,\xi) ,\label{6283}
	\end{eqnarray}
	\begin{eqnarray}
		&&(\xi_k-d\phi_k(x_k))^2=\widetilde{e}_k^2(x,\xi)(x_k+F_k'(\xi_k))^2\notag\\
		&=&\widetilde{e}_k^2(x,\xi)\sharp (x_k+F_k'(\xi_k))^2+hc_{k2}(x,\xi)\sharp (\xi_k-d\phi_k(x_k))+h^2d_{k2}(x,\xi).\label{6284}
	\end{eqnarray}
	Moreover, using (\ref{l1.2}),  we can write
	\begin{eqnarray}
		x_k+F_k'(\xi_k)=e_k(x,\xi)(\xi_k-d\phi_k(x_k))=e_k(x,\xi)\sharp (\xi_k-d\phi_k(x_k))+hb_k(x,\xi),\ k=1,2,\label{741}
	\end{eqnarray}
	for  some symbols $b_k(x,\xi)\in S_0(1)$, $k=1,2$.
	Substituting (\ref{741}) into (\ref{6283}), then taking the Weyl quantization, we get
	\begin{eqnarray}
		\mathcal{L} _k^2&=&\frac{1}{h^2} G_h^w(e_k^2)\circ G_h^w((\xi_k-d\phi_k(x_k))^2) +G_h^w(c_{k1})\circ G_h^w(e_k)\circ \frac{1}{h}G_h^w(\xi_k-d\phi_k(x_k))\notag\\
		&&+G_h^w(c_{k1})\circ G_h^w(b_k)+G_h^w(d_{k1}),\ k=1,2.\label{6285}
	\end{eqnarray}
Therefore, using  Proposition \ref{pjl2} we estimate
\begin{eqnarray}
		\|\mathcal{L} ^2_k(|v|v)\|_{L^2}&\lesssim&    \|\frac{1}{h^2}G_h^w((\xi_k-d\phi_k(x_k))^2)(|v|v)\|_{L^2}\notag\\
		&&+\|\frac{1}{h}G_h^w(\xi_k-d\phi_k(x_k))(|v|v)\|_{L^2}+\|v\|_{L^\infty }\|v\|_{L^2}.\label{214x2}
\end{eqnarray}

Next, we consider the estimate of the right-hand side of (\ref{214x2}).  Since
	\begin{eqnarray}
		\partial_{x_kx_k}(|v| v)&=&\frac{3}{2}|v| \partial_{x_kx_k}v +\frac{1 }{2}|v|^{-1}v^2 \overline{\partial_{x_kx_k}v}+\frac{3}{2}|v|^{-1}\partial_{x_k} v\text{Re}(\partial_{x_k} v\overline{v})\notag\\
		&&-\frac{1}{2}|v|^{-3}v^2 \overline{\partial_{x_k} v}\text{Re}(\partial_{x_k} v\overline{v})+ |v|^{-1}v |\partial_{x_k} v|^2,\qquad k=1,2.\label{891}
	\end{eqnarray}
	we have, by direct calculation
	\begin{eqnarray}
		&&	G_h^w((\xi_k-d\phi_k(x_k))^2)(|v| v)\notag\\
		&=& [-h^2\partial_{x_k} ^2+((d\phi_k (x_k))^2+hid^2\phi_k (x_k))+2hi d\phi_k (x_k) \partial_{x_k} ](|v| v)\notag\\
		&=& -\frac{3}{2}h^2|v| \partial_{x_kx_k}v -\frac{1 }{2}h^2|v|^{-1}v^2\overline{\partial_{x_kx_k}v}-\frac{3}{2}h^2|v|^{-1}\partial_{x_k} v \text{Re}(\partial_{x_k} v\overline{v})\notag\\
		&&+\frac{1}{2}h^2|v|^{-3}v^2\overline{\partial_{x_k} v}\text{Re} (\partial_{x_k} v\overline{v})- h^2|v|^{-1}v |\partial_{x_k} v|^2+\left((d\phi_k(x_k))^2+hid^2\phi_k (x_k)\right)(|v| v)\notag\\
		&&+2hi d\phi_k (x_k)(\frac{3}{2}|v| \partial_{x_k} v+\frac{1 }{2}|v|^{-1}v^2\overline{\partial_{x_k} v})\notag\\
		&=& \frac{3}{2}|v| (-h^2\partial_{x_k} ^2+((d\phi_k (x_k))^2+hid^2\phi_k (x_k))+2hi d\phi_k (x_k) \partial_{x_k} )v\notag\\
		&&+\frac{1 }{2}|v|^{-1}v^2 \overline{(-h^2\partial_{x_k} ^2+((d\phi_k (x_k))^2+hid^2\phi_k (x_k))+2hi d\phi_k (x_k) \partial_{x_k} )v}\notag\\
		&&-\frac{3}{2}|v|^{-1}(h\partial_{x_k} v-id\phi_k(x_k)v)\text{Re}( (h\partial_{x_k} v-id\phi_k(x_k)v)\overline{v})\notag\\
		&&+\frac{1}{2}|v|^{-3}v^2 \overline{h\partial_{x_k} v-id\phi_k(x_k) v}\text{Re} ((h\partial_{x_k} v-id\phi_k(x_k) v)\overline{v})\notag\\
		&&- |v|^{-1}v |h\partial_{x_k} v-id\phi_k(x_k) v|^2,\ k=1,2. \notag
	\end{eqnarray}
	Substituting  $h\partial_{x_k} -id\phi_k(x_k)=iG_h^w(\xi_k-d\phi_k(x_k))$ into the above expression, we get
	\begin{eqnarray}
		&&G_h^w((\xi_k-d\phi_k(x_k))^2)(|v| v)\notag\\
		&=& \frac{3}{2}|v| G_h^w((\xi_k-d\phi_k(x_k) )^2)v+\frac{1 }{2}|v|^{-1}v^2 \overline{G_h^w((\xi_k-d\phi_k(x_k) )^2)v}\notag\\
		&&+\frac{3}{2}|v|^{-1}iG_h^w(\xi_k-d\phi_k(x_k) )v \text{Im} (G_h^w(\xi_k-d\phi_k(x_k) )(v)\overline{v})\notag\\
		&&-\frac{1}{2}|v|^{-3}v^2 \overline{iG_h^w(\xi_k-d\phi_k(x_k) )v} \text{Im}  (G_h^w((\xi_k-d\phi_k(x_k) )(v)\overline{v})\notag\\
		&&- |v|^{-1}v |G_h^w(\xi_k-d\phi_k(x_k) )v|^2,\ k=1,2.\label{752}
	\end{eqnarray}
By Proposition \ref{pjl2}, we have,  for $k=1,2$
\begin{eqnarray}
		&&\|\frac{1}{h^2}G_h^w((\xi_k-d\phi_k(x_k))^2)(|v|v)\|_{L_x^2}\notag\\
		&\lesssim& \|v\|_{L_x^\infty } \|\frac{1}{h^2}G_h^w((\xi_k-d\phi_k(x_k))^2)v\|_{L_x^2}+ \|\frac{1}{h}G_h^w(\xi_k-d\phi_k(x_k))v\|_{L_x^4}^2. \notag
\end{eqnarray}
Substitution of the above inequality  into (\ref{214x2}) gives,  for $k=1,2$
	\begin{eqnarray}
		&&\|\mathcal{L} _k^2(|v| v)\|_{L_x^2}+\|\frac{1}{h^2}G_h^w((\xi_k-d\phi_k(x_k))^2)(|v|v)\|_{L_x^2}\notag\\
		&\lesssim& \|v\|_{L_x^\infty } (\|v\|_{L_x^2}+\|\frac{1}{h^2}G_h^w((\xi_k-d\phi_k(x_k))^2)v\|_{L_x^2}) \notag\\
		&&+ \|\frac{1}{h}G_h^w(\xi_k-d\phi_k(x_k))v\|_{L_x^4}^2+\|\frac{1}{h}G_h^w(\xi_k-d\phi_k(x_k))(|v|v)\|_{L_x^2}. \label{6294}
	\end{eqnarray}
	We now  estimate  the right-hand side of (\ref{6294}). Notice that
	\begin{equation}
		\partial_{x_k} e^{-it\phi_k(x_k)}v=ie^{-it\phi_k(x_k)}\frac{1}{h}G_h^w(\xi_k-d\phi_k(x_k))v,\label{6292}
	\end{equation}
	\begin{equation}
		\partial_{x_{kk}} e^{-it\phi_k(x_k)}v=-e^{-it\phi_k(x_k)}\frac{1}{h^2}G_h^w((\xi_k-d\phi_k(x_k))^2)v;\label{813z1}
	\end{equation}
	so that by   Gagliardo-Nirenberg's inequality
	\begin{eqnarray}
		&&\|\frac{1}{h}G_h^w(\xi_k-d\phi_k(x_k))v\|_{L_x^4}^2=\|\partial_{x_k} e^{-it\phi_k(x_k)}v\|_{L_x^4}^2 \notag\\
		&\lesssim &   \|e^{-it\phi_k(x_k)}v\|_{L_x^\infty } \|\partial_{x_k x_k} e^{-it\phi_k(x_k)}v  \|_{L_x^2}
		\lesssim    \|v\|_{L_x^\infty } \|\frac{1}{h^2}G_h^w((\xi_k-d\phi_k(x_k))^2)v\|_{L_x^2}. \label{6293}
	\end{eqnarray}
	By a similar argument and Young's inequality,  we find the estimate
	\begin{eqnarray}
	&&	\|\frac{1}{h}G_h^w(\xi_k-d\phi_k(x_k))(|v|v)\|_{L_x^2}\notag\\
	&\lesssim &   \|e^{-it\phi_k(x_k)}(|v|v)\|_{L^2 } ^{\frac{1}{2}}\|\partial_{x_k x_k} e^{-it\phi_k(x_k)}(|v|v)  \|_{L_x^2}^{\frac{1}{2}} \notag\\
		&\le &  C(\eta) \|v\|_{L_x^\infty }\|v\|_{L_x^2}+ \eta\|\frac{1}{h^2}G_h^w((\xi_k-d\phi_k(x_k))^2)(|v|v)\|_{L_x^2}.\label{8292}
	\end{eqnarray}
Substituting (\ref{6293})--(\ref{8292}) into (\ref{6294}) and   choosing $\eta>0$ sufficiently small, it follows  that
	\begin{equation}
		\|\mathcal{L} _k^2(|v| v)\|_{L_x^2}\lesssim  \|v\|_{L_x^\infty } (\|v\|_{L_x^2}+\|\frac{1}{h^2}G_h^w((\xi_k-d\phi_k(x_k))^2)v\|_{L_x^2}),\qquad k=1,2. \label{8294}
	\end{equation}
Taking the Weyl quantization in (\ref{6284}), then using 	Propositions \ref{pjl2} and \ref{pcomposition}, we obtain
	\begin{eqnarray}
		\|\frac{1}{h^2}G_h^w((\xi_k-d\phi_k(x_k))^2)v\|_{L_x^2} \lesssim \|\mathcal{L} ^2_kv\|_{L_x^2}  +\|\frac{1}{h} G_h^w((\xi_k-d\phi_k(x_k)))v\|_{L_x^2}+\|v\|_{L_x^2}. \label{731}
	\end{eqnarray}
	Moreover, it follows from (\ref{6292})--(\ref{813z1}), Gagliardo-Nirenberg and Young's inequality that
	\begin{eqnarray}
		\|\frac{1}{h} G_h^w((\xi_k-d\phi_k(x_k)))v\|_{L_x^2} &\lesssim&  \| e^{-it\phi_k(x_k)}v\|_{L_x^2} ^{\frac{1}{2}} \|\partial_{x_k x_k} e^{-it\phi_k(x_k)}v\|_{L_x^2} ^{\frac{1}{2}}\notag\\
		&\lesssim &  \|v\|_{L_x^2 } ^{\frac{1}{2}} \|\frac{1}{h^2}G_h^w((\xi_k-d\phi_k(x_k))^2)v\|_{L_x^2}^{\frac{1}{2}} \notag \\
		&\le & \eta \|\frac{1}{h^2}G_h^w((\xi_k-d\phi_k(x_k))^2)v\|_{L_x^2} +C(\eta) \|v\|_{L_x^2}. \label{732}
	\end{eqnarray}
	Combining (\ref{731}) and (\ref{732}),  choosing $\eta>0$ sufficiently small, we obtain
	\begin{eqnarray}
		\|\frac{1}{h^2}G_h^w((\xi_k-d\phi_k(x_k))^2)v\|_{L_x^2} \lesssim \|\mathcal{L} _k^2v\|_{L_x^2} +\|v\|_{L_x^2},\ k=1,2. \label{6295}
	\end{eqnarray}
	The inequality (\ref{lvalpha}) is now an immediate consequence of (\ref{8294}) and (\ref{6295}).
	
	Finally, we prove (\ref{751}).  Taking the Weyl quantization in (\ref{741}), then using  Propositions \ref{pjl2} and \ref{pcomposition}, we obtain
	\begin{eqnarray}
		\|\mathcal{L} _kv\|_{L_x^2} \le \|\frac{1}{h}G_h^w(\xi_k-d\phi_k(x_k))v\|_{L_x^2} +\|v\|_{L^2},\ k=1,2.\label{210w1}
	\end{eqnarray}
	Substitution of  (\ref{732}) and (\ref{6295}) into (\ref{210w1}) yields the desired estimate (\ref{751}). This completes the proof of Lemma \ref{l2v}.
\end{proof}
\section*{Acknowledgements}
This work is
partially supported by  National Natural Science Foundation of China    11931010, and Zhejiang Provincial Natural Science Foundation of China LDQ23A010001.

\end{document}